\documentclass[oneside,british,english]{amsart}
\usepackage[T1]{fontenc}
\usepackage[latin9]{inputenc}
\usepackage{refstyle}
\usepackage{amsthm}
\usepackage{amssymb}
\usepackage{esint}

\makeatletter


\AtBeginDocument{\providecommand\propref[1]{\ref{prop:#1}}}
\AtBeginDocument{\providecommand\lemref[1]{\ref{lem:#1}}}
\AtBeginDocument{\providecommand\secref[1]{\ref{sec:#1}}}
\AtBeginDocument{\providecommand\thmref[1]{\ref{thm:#1}}}
\AtBeginDocument{\providecommand\subref[1]{\ref{sub:#1}}}
\RS@ifundefined{subref}
  {\def\RSsubtxt{section~}\newref{sub}{name = \RSsubtxt}}
  {}
\RS@ifundefined{thmref}
  {\def\RSthmtxt{theorem~}\newref{thm}{name = \RSthmtxt}}
  {}
\RS@ifundefined{lemref}
  {\def\RSlemtxt{lemma~}\newref{lem}{name = \RSlemtxt}}
  {}

\numberwithin{equation}{section}
\numberwithin{figure}{section}
\theoremstyle{plain}
\newtheorem{thm}{\protect\theoremname}[section]
  \theoremstyle{remark}
  \newtheorem{rem}[thm]{\protect\remarkname}
  \theoremstyle{plain}
  \newtheorem*{lem*}{\protect\lemmaname}
  \theoremstyle{plain}
  \newtheorem{prop}[thm]{\protect\propositionname}
  \theoremstyle{definition}
  \newtheorem{example}[thm]{\protect\examplename}
  \theoremstyle{definition}
  \newtheorem{defn}[thm]{\protect\definitionname}
  \theoremstyle{plain}
  \newtheorem{lem}[thm]{\protect\lemmaname}
  \theoremstyle{plain}
  \newtheorem{cor}[thm]{\protect\corollaryname}

\newref{prop}{name=Proposition~}
\newref{lem}{name=Lemma~}
\newref{thm}{name=Theorem~}
\newref{rem}{name=Remark~}
\newref{cor}{name=Corollary~}

\makeatother

\usepackage{babel}
  \addto\captionsbritish{\renewcommand{\corollaryname}{Corollary}}
  \addto\captionsbritish{\renewcommand{\definitionname}{Definition}}
  \addto\captionsbritish{\renewcommand{\examplename}{Example}}
  \addto\captionsbritish{\renewcommand{\lemmaname}{Lemma}}
  \addto\captionsbritish{\renewcommand{\propositionname}{Proposition}}
  \addto\captionsbritish{\renewcommand{\remarkname}{Remark}}
  \addto\captionsbritish{\renewcommand{\theoremname}{Theorem}}
  \addto\captionsenglish{\renewcommand{\corollaryname}{Corollary}}
  \addto\captionsenglish{\renewcommand{\definitionname}{Definition}}
  \addto\captionsenglish{\renewcommand{\examplename}{Example}}
  \addto\captionsenglish{\renewcommand{\lemmaname}{Lemma}}
  \addto\captionsenglish{\renewcommand{\propositionname}{Proposition}}
  \addto\captionsenglish{\renewcommand{\remarkname}{Remark}}
  \addto\captionsenglish{\renewcommand{\theoremname}{Theorem}}
  \providecommand{\corollaryname}{Corollary}
  \providecommand{\definitionname}{Definition}
  \providecommand{\examplename}{Example}
  \providecommand{\lemmaname}{Lemma}
  \providecommand{\propositionname}{Proposition}
  \providecommand{\remarkname}{Remark}
\providecommand{\theoremname}{Theorem}

\begin{document}

\title{The reals as rational Cauchy filters }

\author{Ittay Weiss}
\begin{abstract}
We present a detailed and elementary construction of the real numbers
from the rational numbers a la Bourbaki. The real numbers are defined
to be the set of all minimal Cauchy filters in $\mathbb{Q}$ (where
the Cauchy condition is defined in terms of the absolute value function
on $\mathbb{Q}$) and are proven directly, without employing any of
the techniques of uniform spaces, to form a complete ordered field.
The construction can be seen as a variant of Bachmann's construction
by means of nested rational intervals, allowing for a canonical choice
of representatives. 
\end{abstract}

\maketitle

\section{Introduction}

The aim of this article is to present an elementary construction of
the real numbers as a completion of the rational numbers following
Bourbaki's completion of a uniform space by means of minimal Cauchy
filters. Of the numerous ways of constructing the real numbers (see
\cite{WeissSurvey} for a survey, where the present construction is
outlined, christened the Bourbaki reals) perhaps the two most famous
approaches are Cantor's and Dedekind's. For reasons explained below,
we propose the present construction as a competitor in the categories
of elegance and of pedagogical importance to these two constructions. 

The construction of the real numbers we present is quickly motivated
in one of two ways. Bourbaki's approach to the real numbers (see \cite{bourbaki1998general})
is not to construct any particular model of the real numbers, but
rather to view them as a completion of $\mathbb{Q}$, viewed as a
uniform space. The proofs of the complete ordered field axioms are
facilitated through the use of general universal properties of the
completion, thus avoiding the technicalities of any particular construction.
While the elegance of this approach is undisputed it does not constitute
a proof of the consistency of the axioms of complete ordered fields
(relative to the rationals) without recourse to the existence of the
completion of a uniform space. Of course, Bourbaki also provides a
general construction of such a completion in terms of minimal Cauchy
filters. Thus, following Bourbaki and at the same time going against
Bourbaki's spirit of not constructing the reals, we do construct the
reals as minimal Cauchy filters of rational numbers. The aim of this
work is to present the details of this construction in a completely
elementary and self-contained fashion. 

A second motivation for the construction goes back to the late 19th
century and the early attempts of placing the real numbers on a rigorous
footing. Other than Cantor's construction by means of Cauchy sequences
and Dedekind's construction by cuts, some of the other attempts for
formal constructions of the real numbers were motivated by the properties
of nested intervals, though working out the details proved challenging,
finally culminating with Bachmann's construction (\cite{Bachmann1892}).
Some of the general interest in the potential of each of these three
alternatives is gathered from the following quote from \cite[page 46]{ebbinghaus1990numbers}
\begin{quote}
The practical advantages of nested intervals over cuts or fundamental
sequences are as follows. If the real number $x$ is described by
$(I_{n})$ the position of $x$ on the number axis is fixed within
defined bounds by each $I_{n}$. On the other hand with a fundamental
sequence $(r_{n})$, the knowledge of one $r_{n}$ still tells us
nothing about the position of $x$. Again, the description of $x$
as a cut $(\bar{\alpha},\beta)$ can result from a definition of the
set $\alpha$ by means of statements which say nothing directly about
the position of $x$. The theoretical disadvantage of using the nested
interval approach is that introducing the relation $\le$ between
equivalence classes of nets of nested intervals and verifying the
field properties for addition and multiplication is somewhat troublesome.
\end{quote}
Let us now note that any nest $\{I_{n}\}$ of rational intervals is
a Cauchy filter base and thus gives rise to a unique minimal Cauchy
filter of rationals. This observation gives a direct comparison between
Bachmann's construction and our construction, and in this sense our
construction can be seen as a variant of Bachmann's allowing for a
canonical choice of representatives. It is shown below (i.e., \propref{InternalApproximations})
that the practical advantages of Bachmann's construction are shared
with our construction, while the technical difficulties one encounters
with Bachmann's construction are confined in our construction to the
proof of one result (i.e., \lemref{sumOfReals}). The rest of the
construction is rather straightforward. Moreover, unlike in Bachmann's
construction, in our construction the definition of the ordering of
the reals, and the related proofs, are quite elegant. We thus hope
to place Bachmann's construction, through the variant we present,
as a competitor of potentially equal popularity as either Cantor's
construction or Dedekind's construction. 

The real numbers are defined below to be the set of all minimal Cauchy
filters in $\mathbb{Q}$. The ordering on the reals is given as follows.
For real numbers $a$ and $b$, we declare that 
\[
a<b
\]
precisely when there exist $A\in a$ and $B\in b$ such that 
\[
A<B
\]
universally, i.e., when 
\[
\alpha<\beta
\]
for all $\alpha\in A$ and $\beta\in B$. Equivalently, $a\le b$
holds precisely when for all $A\in a$ and $B\in b$
\[
A\le B
\]
existentially, i.e., when 
\[
\alpha\le\beta
\]
for some $\alpha\in A$ and some $\beta\in B$. The algebraic structure
on $\mathbb{R}$ is defined as follows. Given real numbers $a$ and
$b$, their \emph{sum }
\[
a+b
\]
is the filter generated by the filter base 
\[
\{A+B\mid A\in a,B\in b\},
\]
where
\[
A+B=\{\alpha+\beta\mid\alpha\in A,\beta\in B\}.
\]
Similarly, the \emph{product}
\[
ab
\]
is the filter generated by the filter base
\[
\{AB\mid A\in a,B\in b\},
\]
where 
\[
AB=\{\alpha\beta\mid\alpha\in A,\beta\in B\}.
\]
Below we give a detailed proof that the reals thus defined form a
complete ordered field, without a-priori use of uniform structures
or the completion process by means of minimal Cauchy filters. Thus
the treatment is completely elementary. 
\begin{rem}
A word on the originality content of this work is in order. Bourbaki's
construction of the reals as the uniform completion of the rationals
is certainly not new, nor is the use of minimal Cauchy filters in
the construction of the completion of any uniform space. Due to the
ambient well-developed general theory we find ourselves in the position
of being guaranteed that constructing the reals as minimal Cauchy
rational filters must work. However, the details of this construction
as we present below, other than being elementary, are not just the
result of unpacking the classical Bourbaki proofs. The definitions
of addition and multiplication of real numbers are given explicitly
on the level of the minimal Cauchy filters without the use of the
roundification process of a filter. The ordering structure, which
Bourbaki gives in terms of differences and positives, is also given
directly in terms of the minimal Cauchy filters (and this is perhaps
the main contribution of this work in terms of originality). In particular,
the construction lends itself to investigations of its usefulness
for implementation on a computer and, from a topos theoretic point
of view, the construction presents a new way to construct a real numbers
object in a topos. These issues are not investigated here. 
\end{rem}

\subsection{Plan of the paper}

The construction of the reals is given in \secref{Constructing-the-reals}
where we take as given a model $\mathbb{Q}$ for the rationals as
an ordered archimedean field. Since the prerequisites for the construction
are very modest, \secref{Preliminary-notions} is a self-contained
preliminary section giving an elementary treatment of the geometry
of intervals and the basics of filters. After the construction is
dealt with, \secref{Consequences} is a short presentation of two
consequences of the formalism - a proof of the uncountability of the
reals and a criterion for convergence. For the sake of presenting
the reader with a broader spectrum than just the details of the construction,
\secref{RoleOfReals} is a journey to some of the realms of modern
mathematics inspired by the real numbers.

\section{The real numbers and their role in shaping mathematics\label{sec:RoleOfReals}}

. In this section we provide a brief account of the real numbers,
of how the real numbers are typically modeled, and of the reciprocal
effects between the ambient mathematics and the desire to deepen our
understanding of the real numbers. Since a full treatment of these
issues can easily fill up an entire book, the material presented is
by necessity partial.

\subsection{Prehistory}

The ancient Greeks, and in particular the Pythagoreans, were very
fond of numbers. Alongside laying the foundations of modern axiomatic
rigor they also held magnificent superstitious beliefs about how numbers
relate to, and govern, nearly everything in the universe. The Greeks'
concept of a number was slightly different than our modern understanding
of what it is. While we accept number systems even if they are not
used (immediately) to measure anything real, the Pythagoreans were
highly motivated by geometry, and numbers were often used for, and
understood through, geometric interpretation. For many years the Greek
mathematicians and engineers were quite satisfied with their system
of ratios - a system quite close to the modern system of rational
numbers. The common belief was that ratios suffice for all practical
real needs, or that at the very least they suffice to measure all
geometric constructs precisely. It is thus that the famous discovery
(about which very few details are known) that $\sqrt{2}$ is irrational,
and consequently that the length of the hypotenuse of a right triangle
of side length $1$, a very ordinary and real object, can not be measured
as a ratio was received as a shock. 

The divide between rational and irrational real numbers had raised
the simple issue of just what is an irrational number, and so the
quest to find mathematical entities with which every reasonable measurement
is possible (even if only in theory) had begun. Somewhat astonishingly,
numerous centuries have passed before an answer was given in the late
19th century. The newly found models lay to rest the need for a definition
and allowed for the first time for a thorough investigation of the
real number line, and, quite unexpectedly, fundamental surprises with
significant ramifications were uncovered. 

During the prehistoric era of the real numbers, that is those days
following the Pythgoreans' discovery of the irrationality of $\sqrt{2}$
but preceding any formal construction of the reals, scientists had
to cope with the reality of an ever growing list of irrational numbers
(to which $e$ was added in 1737 by Euler and $\pi$ in 1761 by Lambert),
the ever increasing difficulty in discerning between the rational
and the irrational (it is still unknown for instance whether $e+\pi$
is rational or not), and the constant feeling of incompleteness due
to the fact that no mathematical system has been found yet that truly
captures the real line. Of course, that did not stop science from
progressing. After all, mathematics is merely a modeling tool for
the working scientist and as long as the rough idea is good enough
to work with, one does not necessarily need be discouraged by the
lack of rigorous details. 

Newton and Leibniz certainly were not deterred by the non-existent
foundations when they developed calculus. They employed not only the
real numbers but also infinitesimals - elusive numbers having the
property of being positive, yet smaller than any number of the form
$1/n$, for all $n\ge1$. The scientific revolution embodied in the
work of Newton and Leibniz drew immense attention from the scientific
community, among which the famous quote from Berkeley's ``The Analyst''
regarding infinitesimals as the ghosts of departed quantities, expressing
the growing discomfort at the lack of rigorous foundations. For the
first time in the prehistory of the real number system significant
disagreement was encountered on what did constitute a real number,
and what did not. It can be said with a fair amount of certainty that
the growing need to provide rigorous foundations for calculus and
the increased mathematical sophistication spawned by calculus had
a decisive role in the discovery of rigorous models of the real number
system.

\subsection{Constructions of the real numbers}

Prominent figures such as Bolzano and Weierstrass attempted to construct
the real numbers, with only partial success. The first correct models,
and also the most commonly presented constructions in modern textbooks,
were given by Dedekind (1872) and by Cantor (1873) using, respectively,
cuts of rational numbers and Cauchy sequences of rational numbers.
Bachmann's less well-known construction, employing nests of rational
intervals, was given around the same time (1892). This trio of constructions
is indicative of the fact that the major obstacle for previous generations
in obtaining a rigorous definition of the real numbers was technological
- the mathematical tools and techniques of calculus paved the way
for three mathematicians to come up with three different solutions
to the same problem. 

At long last, then, the real numbers were born (nearly as triplets)
and for a long while, about a century, no other constructions were
given. Of course, there was no pressing need for more constructions,
but nonetheless from 1960 till today some 16 other constructions have
been given (excluding the one presented in this work). The interested
reader is referred to \cite{WeissSurvey} for a comprehensive survey
of most, if not all, constructions of the real numbers found in the
literature. 

Modern textbooks often describe the real numbers axiomatically, simply
by listing the axioms of a complete ordered field. It is not hard
at all to prove that any two structures satisfying these axioms are
isomorphic. In other words, the theory of complete ordered fields
(which is a second order theory) is categorical. The axiomatic approach
is convenient enough to develop all of calculus and thus, in a sense,
the sole purpose of exhibiting an actual model of the axioms is to
ease one's suspicions (if any) that perhaps a contradiction is lurking
underneath the surface. It is seldom the case that one uses the particularities
of any given model in order to actually prove anything of interest.
Once the axioms are verified, the details of the construction have
served their purpose in establishing the relative consistency of the
axioms, and are promptly forgotten. 

The consistent insistence of producing more and more models of the
real numbers may thus be puzzling, and perhaps is atestment to human
curiosity more than anything else. The real numbers are so fundamental
that our fascination with them is not so easily quenched. Another
aspect is that the details of a particular constructions may be at
considerable odds with one's own intuition of what the real numbers
\emph{really }are that the construction may be considered flawed (on
some meta-mathematical level). One additional criterion for the success
of any particular construction is pedagogical. Given that any construction
of the reals is likely to require a non-negligible amount of time
to comprehend, the techniques employed and the details that must be
worked out had better be helpful to the student rather than form a
diversion from the main results and techniques, so as not to become
a waste of precious time. The review below takes a critical look at
Cantor's and Dedekind's constructions.

\subsubsection*{Cantor's construction}

Cantor presented his construction of the real numbers by means of
Cauchy sequences in \cite{cantor1966grundlagen}. We present here
the construction only.

Consider the collection $S$ of all rational sequences, i.e., all
sequences $(a_{n})_{n\ge1}$ where $a_{n}\in\mathbb{Q}$. Declare
such a sequence to be a \emph{null sequence} if its limit is $0$,
a condition given purely in terms of rational numbers by the condition
that for all rational $\varepsilon>0$ there exists $n_{0}\in\mathbb{N}$
such that $|a_{n}|<\varepsilon$ for all $n\ge n_{0}$. The relation
$(a_{n})\sim(b_{n})$ precisely when $(a_{n}-b_{n})$ is a null sequence,
is an equivalence relation on $S$, and the set of real numbers is
the quotient $S/{\sim}$. 

The construction quite evidently requires quotienting, though in a
rather standard form. However, it must be remembered that the student
encountering the reals in this fashion in a first rigorous analysis
course is not accustomed to the mental juggling of equivalence classes.
Consequently, even though the algebraic properties of the reals are
deduced quite straightforwardly from the corresponding properties
of $\mathbb{Q}$, pedagogically, it is questionable how effective
it is for the novice to be confronted with such a complicated apparatus,
whereby a real number is a set of sequences of rational numbers.

\subsubsection*{Dedekind's construction}

Dedekind presented his construction of the real numbers in terms of
sections, or cuts, of rational numbers in \cite{dedekind1930stetigkeit}.
Again, we present the construction itself, mentioning that few texts
actually go through the painful process of verifying all of the claims
required for validating the construction. Full details, spanning numerous
pages, can be found in Landau's \cite{Landau}, possibly the only
text to actually prove all of the details. 

A Dedekind cut $(L,R)$ consists of two non-empty sets $L$ and $R$
which partition the rational numbers, with $x<y$ for all $x\in L$
and $y\in R$. Every rational number determines two cuts; one where
$x$ is the largest element in $L$, and one where it is the smallest
element in $R$. To avoid double representations, an arbitrary choice
must be made: requiring $L$ does not have a largest element (or,
essentially equivalently, that $R$ does have have a smallest element).
The set of real numbers is then the set of all Dedekind cuts. 

The construction has a rather geometric flavour, addressing the incompleteness
of the rationals directly. Moreover, it is an easy exercise to explicitly
construct a Dedekind cut which does not correspond to any rational
number (e.g., $R=\{x\in\mathbb{Q}\mid x^{2}>2\}$), and so it is immediate
that one obtains new entities which were not there before (a similar
demonstration using Cantor's construction is somewhat contrived, and
prehaps less impressive, due to the algebraic nature of Cantor's construction
versus the geometric nature of Dedekind's). However, the amount of
detail required for a complete verification of the construction is
staggering. In a sense, the student is required to exchange one's
belief that a model of the real numbers exists by the belief that
a proof that Dedekind's construction is valid exists. Nobody really
expects anybody to go through the entire proof.

\subsubsection*{There really is nothing simple in the passage from $\mathbb{Q}$
to $\mathbb{R}$.}

The discussions above should make it clear that neither Cantor's nor
Dedekind's construction of the real numbers achieves the goal of introducing
the real numbers rigorously and palatably at the same time. The difficulties
present in one construction are somewhat complimentary to those in
the other construction, but each approach retains a considerable amount
of technical, conceptual, and pedagogical caveats. We conclude this
discussion by presenting further criticism of Cantor's and Dedekind's
constructions, voiced by Halmos and Conway. 

In \cite{halmos1985pure} Halmos reproachfully writes:
\begin{quotation}
``As far as Dedekind cuts are concerned, we abstractionists have
been arguing against them for a long time; it's not quite honest to
dump them in our laps and then accuse us of nurturing them. They are
a historical accident. Most students of mathematics learn them as
the first logically coherent way of constructing a complete ordered
field, but, so far as I know, they are out of fashion by now, or in
any event they ought to be. A Dedekind cut is a very narrowly focused
concept. It can be generalized to certain kinds of ordered sets, but
that possibility is of interest to specialists only. I am firmly convinced
that one can be a broadly cultured, creative mathematician without
knowing what a Dedekind cut is. Equivalence classes of Cauchy sequences
are easier to understand, and, three cheers, they are more algorithmic.
The important thing from the point of view of abstract mathematics,
however, is that sequences are ``cleaner'' than cuts, more widely
applicable, and more beautiful, and more structurally pertinent to
the study of analysis.''
\end{quotation}
We refer to Conway's words from \cite{Conway}, where the difficulties
inherent to Dedekind's construction are discussed:
\begin{quotation}
``In practice the main problem is to avoid tedious case discussions.
{[}Nobody can seriously pretend that he has ever discussed even eight
cases in such a theorem -- yet I have seen a presentation in which
one theorem actually had 64 cases!{]} Now if we define $\mathbb{R}$
in terms of Dedekind sections in $\mathbb{Q}$, then there are at
least four cases in the definition of the product $xy$ according
to the signs of $x$ and $y$ {[}And zero often requires special treatment!{]}.
This entails eight cases in the associative law $(xy)z=x(yz)$ and
strictly more in the distributive law $(x+y)z=xz+yz$ (since we must
consider the sign of $x+y$). Of course an elegant treatment will
manage to discuss several cases at once, but one has to work very
hard to find such a treatment.''
\end{quotation}
Shortly afterwards Conway provides the following criticism of Cantor's
construction, indicating its pedagogical difficulties: 
\begin{quotation}
``{[}The reader should be cautioned about difficulties in regarding
the construction of the reals as a particular case of the completion
of a metric space. If we take this line, we plainly must not start
by defining a metric space as one with a real-valued metric! So initially
we must allow only rational values for the metric. but then we are
faced with a problem that the metric on the completion must be allowed
to have arbitrary real values!

Of course, the problem here is not actually insoluble, the answer
being that the completion of a space whose metric takes values in
a field $\mathbb{F}$ is one whose metric takes values in the completion
of $\mathbb{F}$. But there are still sufficient problems in making
this approach coherent to make one feel that it is simpler to first
produce $\mathbb{R}$ from $\mathbb{Q}$, and later repeat the argument
when one comes to complete an arbitrary metric space, and of course
this destroys the economy of the approach. My own feeling is that
in any case the apparatus of Cauchy sequences is logically too complicated
for the simple passage from $\mathbb{Q}$ to $\mathbb{R}$ -- one
should surely wait until one has the real numbers before doing a piece
of analysis!{]}''
\end{quotation}
For a detailed account of the historical development surrounding these
constructions of the real numbers (and much more) the reader is referred
to \cite{adventures}. In the rest of this section we outline some
aspects of the interplay between the study of the real numbers and
modern mathematical developments, primarily analysis, set theory,
and logic.

\subsection{Transcendental numbers}

Advances in the techniques of analysis led to considerable achievements
on a superficially simple question regarding the nature of irrational
numbers. The first confirmed examples of irrational numbers, i.e.,
$\sqrt{2}$, $\sqrt{3}$, $\sqrt{5}$, etc. all belong to a family
of irrational numbers that are quite easy to verify. Namely, if $k\ge2$
is an integer and $n\ge2$ is an integer not of the form $m^{k}$,
$m\in\mathbb{N}$, then $\sqrt[k]{n}$ is irrational. The proof is
an easy consequence of the fundamental theorem of arithmetic. However,
every irrational number of the form $\sqrt[k]{n}$ satisfies an algebraic
equation with integer coefficients, namely $x^{k}-n=0$. Another family
of real numbers easily proved to be irrational are numbers of the
form $\log_{a}b$, for suitable values of $a$ and $b$. Again, these
numbers are easily shown to satisfy a polynomial relation with integer
coefficients. A real number $\alpha$ which is the root of a polynomial
with integer coefficients is called an \emph{algebraic }number. It
is immediate that any rational number is algebraic, the latter being
thus a natural \foreignlanguage{british}{generalisation} of the former.
The above examples illustrate that numbers that are easily shown to
be irrational tend to also be algebraic. The problem of existence
of transcendental numbers, i.e., non-algebraic numbers, was open until
1844 when \foreignlanguage{british}{Liouville} constructed the first
example of a transcendental number. 

Liouville's method is analytical and in fact produces a whole family
of transcendental numbers. It relies on the following lemma, which
is the heart of Liouville's construction.
\begin{lem*}[Liouville's Lemma]
If $\alpha$ is an irrational algebraic number which is the root
of a polynomial of degree $n>0$ with integer coefficients, then a
real number $L>0$ exists such that 
\[
|\alpha-\frac{p}{q}|>\frac{L}{q^{n}}
\]
holds for all integers $p,q$, with $q>0$. 
\end{lem*}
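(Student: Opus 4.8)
The plan is to prove the contrapositive-flavoured statement directly: fix an irrational algebraic $\alpha$ and a polynomial $f(x)=c_nx^n+\dots+c_1x+c_0$ with integer coefficients, $c_n\neq 0$, $n>0$, having $\alpha$ as a root. First I would invoke the mean value theorem on $f$: for any rational $p/q$ (with $q>0$), there is a point $\xi$ between $\alpha$ and $p/q$ such that
\[
f(p/q) - f(\alpha) = f'(\xi)\bigl(p/q - \alpha\bigr),
\]
and since $f(\alpha)=0$ this gives $|f(p/q)| = |f'(\xi)|\cdot|\alpha - p/q|$. The idea is to bound $|f(p/q)|$ from below by something of size $1/q^n$ and $|f'(\xi)|$ from above by a constant, so that the two bounds combine into the desired inequality $|\alpha - p/q| > L/q^n$.

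**The lower bound on $|f(p/q)|$.**
The key arithmetic observation is that $q^n f(p/q) = c_n p^n + c_{n-1}p^{n-1}q + \dots + c_0 q^n$ is an integer. It is nonzero, because $\alpha$ is irrational and hence no rational $p/q$ is a root of $f$ (here I would remark that although $f$ might factor, $\alpha$ being a root of $f$ means $\alpha$ is a root of some irreducible integer-coefficient factor, and we may as well have taken $f$ to be that factor from the start so that $f$ has no rational roots — or simply note that we only need $f(p/q)\neq 0$, which holds whenever $p/q\neq$ any of the finitely many rational roots of $f$; the finitely many exceptional rationals can be absorbed into $L$ at the end). A nonzero integer has absolute value at least $1$, so $|f(p/q)| \geq 1/q^n$.

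**The upper bound on $|f'(\xi)|$ and assembling $L$.**
It suffices to consider $p/q$ with $|\alpha - p/q| \leq 1$, since for $p/q$ outside $[\alpha-1,\alpha+1]$ the inequality $|\alpha-p/q|>L/q^n$ holds trivially once $L\leq 1$ (as $q\geq 1$). On the compact interval $[\alpha-1,\alpha+1]$ the continuous function $|f'|$ attains a maximum $M$; set $L_0 = 1/(M+1)$ (using $M+1$ rather than $M$ to stay safely positive even if $f'$ vanishes identically on the interval, which cannot happen for $n\ge 1$ but costs nothing to guard against). Then from $1/q^n \leq |f(p/q)| = |f'(\xi)|\,|\alpha - p/q| \leq M\,|\alpha - p/q|$ we get $|\alpha - p/q| \geq 1/(M q^n) > L_0/q^n$. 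Finally, choosing $L = \min(L_0, 1)$ and, if one did not pass to an irreducible factor, further shrinking $L$ below $\min_{p/q}|\alpha - p/q|\cdot q^n$ over the finitely many rational roots of $f$, handles every case. The main obstacle is not any single estimate — each is routine — but rather the bookkeeping around rational roots of $f$: the cleanest route, which I would adopt, is to reduce at the outset to an irreducible polynomial, so that $f(p/q)\neq 0$ automatically for every rational $p/q$, after which the MVT argument goes through uniformly with no exceptional cases.
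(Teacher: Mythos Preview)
The paper does not actually prove Liouville's Lemma; it merely states the result and refers the reader to Oxtoby's \emph{Measure and Category} for a proof ``of moderate difficulty''. There is therefore nothing in the paper to compare your argument against. That said, your proof is correct and is in fact the classical argument one finds in Oxtoby and most other references: the mean value theorem applied to $f$, the integrality of $q^n f(p/q)$ giving the lower bound $1/q^n$, and a compactness bound on $|f'|$ near $\alpha$. Your handling of the possible rational roots of $f$---either passing to the minimal polynomial (noting that the resulting inequality with the smaller degree $m\le n$ implies the one with $n$ since $q\ge 1$) or absorbing finitely many exceptional rationals into $L$---is exactly the standard bookkeeping.
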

A proof, which is only of moderate difficulty, can be found in \cite{OxtobyMeasureCategory}.
The theorem may be interpreted as saying that an algebraic number
is either rational, or cannot be very well approximated by a rational.
It is thus immediate that any real number $\alpha$ for which there
exist sequences $(p_{n})_{n\ge1}$ and $(q_{n})_{n\ge1}$ of integers,
with $q_{n}\ge2$ for all $n\ge1$, and such that 
\[
|x-\frac{p_{n}}{q_{n}}|<\frac{1}{q_{n}^{n}}
\]
must be transcendental. Numbers satisfying this condition are called
\emph{Liouville numbers}. The existence of Liouville numbers, and
thus of transcendental numbers, is easily established, namely 
\[
\sum_{n=0}^{\infty}\frac{1}{10^{n!}}
\]
is a Liouville number. 

Liouville's result is of rather limited use when trying to settle
the status of naturally occurring numbers such as $e$ or $\pi$.
The proof that $e$ is transcendental was given in 1873 by Hermite,
while the fact that $\pi$ is transcendental was established in 1882
by Lindemann, both using more sophisticated techniques. 

The theory of measures of irrationality and transcendental number
theory carry these results much further. And yet, despite significant
advances, it is still unknown whether $\pi+e$ is transcendental or
not.

\subsection{The uncountability of $\mathbb{R}$ and the abundance of transcendental
numbers}

Numerous proofs of the uncountability of the real numbers are well-known
(see, e.g., \cite{baker2007uncountable} for an unorthodox proof)
and it is not the place here to repeat any of them (a proof utilizing
the construction presented in this work is given in \thmref{uncount}
below). Instead, we contemplate briefly the inherent difficulties
with infinite quantities, and concentrate on the impact of Cantor's
famous uncountability result. 

The concept of infinity posed dramatic challenges to some of the greatest
contributors to the development of science. Galileo in \emph{Two New
Sciences} presents his point of view on the matter through a discussion
involving Salviati, Simplicio, and Sagredo, where Simplicio, confronted
with some simple geometric observations about line segments, states
that:
\begin{quotation}
``Here a difficulty presents itself which seems to me insoluble.
Since it is clear that we may have one line greater than another,
each containing an infinite number of points, we are forced to admit
that, within one and the same class, we may have something greater
than infinity, because the infinity of points in the long line is
greater than the infinity of points in the short line. This assigning
to an infinite quantity a value greater than infinity is quite beyond
my comprehension.''
\end{quotation}
Interestingly, the dialogue continues with a discussion of positive
integers, with Salviati explaining to Simplicio that:
\begin{quotation}
``If I should ask further how many squares there are one might reply
truly that there are as many as the corresponding number of roots,
since every square has its own root and every root its own square,
while no square has more than one root and no root more than one square.''
\end{quotation}
in what is so remarkably close to the notion of cardinal equality
as well as to the proof that $\{n^{2}\mid n\in\mathbb{N}\}$ has the
same cardinality as $\mathbb{N}$. Salviati continues:
\begin{quotation}
``But if I inquire how many roots there are, it cannot be denied
that there are as many as there are numbers because every number is
a root of some square. This being granted we must say that there are
as many squares as there are numbers because they are just as numerous
as their roots, and all the numbers are roots. Yet at the outset we
said that there are many more numbers than squares, since the larger
portion of them are not squares. Not only so, but the proportionate
number of squares diminishes as we pass to larger numbers...''
\end{quotation}
leading Sagredo to ask:
\begin{quotation}
``What then must one conclude under those circumstances?''
\end{quotation}
to which Salviati tragically responds with:
\begin{quotation}
``So far as I can see we can only infer that the totality of all
numbers is infinite, that the number of squares is infinite, and that
the number of their roots is infinite; neither is the number of squares
less than the totality of all numbers, nor the latter greater than
the former; and finally the attributes ``equal'', ``greater'',
and ``less'', are not applicable to infinite, but only to the finite,
quantities.''
\end{quotation}
Such arguments are primarily used today by the lecturer, much to her
delight, in order to torment her students into acceptance of the formal
consequences resulting from the unavoidable notion that two sets between
which a bijection exists are equinumerous, at the cost of rejecting
one's false beliefs about infinity, rather than the other way around.
Galileo is unable to reconcile the facts and chooses to resolve the
situation by throwing the baby out with the bathwater - comparability
is only allowed for finite quantities. 

Cantor's creation of the haven of set theory began in 1874 with his
famous publication of an article in which he demonstrates the uncountability
of the reals, the countability of the algebraic numbers (which receives
much emphasis in the article), and thus concluding that transcendental
numbers exist without explicitly presenting any particular such number.
Opinions vary regarding the precise details of events leading to,
and following from, Cantor's seminal ideas. It is well-established
that Kronecker held a very narrow view on what is considered proper
mathematics, quite openly rejecting Cantor's results. In any case,
even if not stated quite so bluntly, the conclusion is that not only
do transcendental numbers exists (which was already shown by Liouville),
but that in a precise sense the vast majority of real numbers are
transcendental while a tiny proportion of real numbers are algebraic. 

Cantor's work finally made arguments about infinity possible. The
student coming to terms with the counterintuitive phenomena manifesting
infinite sets may take comfort in the fact that remnants of Galileo's
difficulties could still be found in Weierstrass' assertion (see \cite{ferreiros2008labyrinth}
for a much more thorough discussion) made in the summer of 1874, during
a course we gave, to the effect that 
\begin{quotation}
``two 'infinitely great magnitudes' are not comparable and can always
be regarded as equal, and that applying the notion of equality to
infinite magnitudes does not lead to any result.''
\end{quotation}
An assertion of equal counter progressive power as Galileo's conclusion
that only finite quantities can be compared. Cantor's insights freed
us from the shackles of such misconceptions.

\subsection{Other notions of the size of $\mathbb{R}$ - space filling curves}

Much of the counter intuitive nature of the fundamentals of cardinal
comparability, such as those discussed above, or the simply established
fact that $\mathbb{R}^{n}$ and $\mathbb{R}$ have the same cardinality,
obviously stems from the geometric extra baggage that the observer
brings with her when she perceives, e.g., $\mathbb{R}^{2}$ as a plane
versus viewing $\mathbb{R}$ as a line. As Cantor's work was being
digested by the mathematical community, some immediately sought a
more careful formulation of one's intuition that $\mathbb{R}$ is
considerably smaller than $\mathbb{R}^{2}$ by introducing topological
restrictions on the size comparison. There was no disputing that $[0,1]$
and $[0,1]\times[0,1]$ shared the same cardinality, but surely the
line segment $[0,1]$ can not be \emph{continuously} mapped to the
square $[0,1]\times[0,1]$ in such a way as to completely cover it. 

Directly motivated by Cantor's results, in 1890 Peano introduced the
first example showing that even this topological intuition is faulty
by constructing a surjective continuous function $[0,1]\to[0,1]\times[0,1]$,
a so called space-filling curve. In 1891 Hilbert constructed another
such curve. Faced with these results one must admit temporary defeat
in turning the 'obvious' fact that $\mathbb{R}$ has dimension $1$
and thus is significantly smaller then $\mathbb{R}^{2}$ whose dimension
is $2$, into a rigorous argument. Indeed, the topological notion
of dimension is, in light of the above, not at all surprisingly, a
subtle issue whose elucidation required considerable effort (see,
e.g., \cite{dimEngelking,dimPears}). Unfortunately, further discussion
here will take us too far afield from the main thread of this section.

\subsection{The continuum hypothesis}

Cantor's realization that the countable cardinality of $\mathbb{N}$
is strictly smaller than the cardinality $c$ (the continuum) of $\mathbb{R}$
immediately raises the question as to the existence of subsets $S\subseteq\mathbb{R}$
whose cardinality lies strictly between the countable and the continuum.
The standard formulation of that question is in the form known as
the continuum hypothesis, stating that any subset of $\mathbb{R}$
is either countable (finite included) or of cardinality $c$. The
story of the unexpected resolution of the continuum hypothesis is
the subject of numerous articles and books and we shall thus be very
brief. 

Cantor himself was quite frustrated by his inability to resolve the
situation despite many attempts to prove the continuum hypothesis
(it seems Cantor was convinced of its validity). By 1900, when Hilbert
addressed the mathematical community, the impact of Cantor's set theory
was firmly acknowledged and it was not at all unnatural that Hilbert
listed the continuum hypothesis as the first of the 23 problems aimed
at directing the efforts of mathematicians in the ensuing years. 

Interestingly, it is some astounding leaps in mathematical logic,
due primarily to work of Gödel leading to, and resulting from, his
negative answer to the second of Hilbert's 23 problems that prepared
the ground for the final resolution of the continuum hypothesis. Hilbert's
second problem calls for a finitistic proof of the consistency of
Peano's axioms of arithmetic. The impossibility of such a program
was demonstrated by Gödel in 1931 in the form of his famous incompleteness
theorem. 

At the time of nomination of the continuum hypothesis as the opening
problem in Hilbert's list, the theory of sets was still in its infancy.
In some sense, it was not even born yet; the axioms of set theory
were not yet formulated, as it was only in 1908 that Zermelo proposed
the first of several axiomatic systems, largely fueled by Hilbert's
address. With the rapid advances in logic in the first few decades
of the 20th century, Gödel was able to show in 1940 that the continuum
hypothesis can not be disproved from the very well-accepted Zermelo-Fraenkel
axioms of set theory (with or without the axiom of choice). It would
take another 23 years until Cohen proved in 1963 that the continuum
hypothesis can not be proved from the Zermelo-Fraenkel axioms either,
a result of tremendous importance and impact, which led to Cohen's
awarding of the Fields Medal in 1966. 

The continuum hypothesis is thus forever in limbo. Without a doubt
such a result was not suspected by Cantor, Hilbert, or any of their
contemporaries at the time the question emerged. It is wonderfully
astonishing that such a seemingly simple matter as determining the
nature of the cardinalities of subsets of $\mathbb{R}$ presented
a colossal challenge, served as fuel to much of the early development
of logic, and required the genius of two tremendous modern figures
to resolve. There is indeed nothing simple in the passage from $\mathbb{Q}$
to $\mathbb{R}$.

\subsection{The non-triviality of the concept of length}

Healthy geometric intuition dictates that the Riemann integral $\int_{0}^{1}f(x)dx$
of a function which is constantly $1$ except at finitely many points
must be equal to $1$. After all, a finite number of points is a negligible
amount when computing the area determined by the graph of a function,
and indeed the Riemann integral has the property that it is blind
to such minute changes. With the more refined understanding of infinities,
seeing that the rationals are countable while the reals are not, one
also expects the integral a function which is constantly $1$ on the
irrationals and constantly $0$ on the rationals to have integral
equal to $1$, for the exact same reason as above. However, a trivial
computation shows that the Riemann integral of that function does
not exist. Riemann's machinery is blind to finite changes, but it
is completely obliterated by infinite changes that ought to have no
effect. 

Lebesgue's theory of integration resulted from the need to repair
this (and other) deficiencies. The idea is beautifully simple and
profound, with unexpected ramifications. The Riemann integral is obtained
by introducing a partition of the $X$-axis, estimating the area bound
under the graph (let us assume all functions are non-negative) by
means of rectangles. For the Lebesgue integral one starts instead
with a partition of the $Y$-axis, and then pulling back each segment
to the $X$-axis by means of the inverse image under $f$ to obtain
a partition of the $X$-axis. Then each of these pre-images is used
as the base of a 'rectangle' in order to estimate the area under the
graph. Permitting $f$ to so interact more dynamically in the formation
of the partition of the $X$-axis, versus the more static approach
of the Riemann integral where the partition is forced upon the function,
suggests a process more finely tuned to the needs of the function,
and thus more likely to correctly capture the behaviour of more complicated
functions. The undisputed triumph of Lebesgue's theory of integration
is the result of the affirmation of this suggestion in a very broad
sense. 

However, before Lebesgue's integral can get off the ground, one must
cope with the need to measure the 'length' of the pre-images under
$f$. Such subsets of $\mathbb{R}$ can be quite wild, depending on
the function $f$, and in any case they need not look anything like
an interval, or even a union of intervals. Taking a step back, one
can formulate a simple question - indeed one that the ancient Greeks
could have entertained - namely how does one measure the length of
an arbitrary subset of $\mathbb{R}$, where \emph{length }is taken
in the sense of a notion that meaningfully extends the familiar length
of intervals. Of course, one must state the properties one expects
of such a length concept $\mu\colon\mathcal{P}(\mathbb{R})\to[0,\infty]$.
The following conditions are hard to object to:
\begin{itemize}
\item $\mu(\emptyset)=0$
\item $\mu([0,1])=1$
\item $\mu(\bigcup E_{i})=\sum\mu(E_{i})$, for all countable collections
$\{E_{i}\}_{i\ge1}$ of mutually disjoint subsets of $\mathbb{R}$
\item $\mu$ is translation invariant, meaning that $\mu(r+E)=\mu(E)$,
for all $E\subseteq\mathbb{R}$ and $r\in\mathbb{R}$, where $r+E=\{r+x\mid x\in E\}$. 
\end{itemize}
With such an assignment $\mu$ (hopefully uniquely determined) Lebesgue's
theory can carry through whereby all functions will be integrable.
However, the relentless tendency of $\mathbb{R}$ to harbor surprises
strikes again and this marvelous dream is shattered to countably many
pieces by Vitali's famous example illustrating the inconsistency of
the four axioms above. The construction is far from obvious, but its
details are very simple, given in the following sequence of exercises:
\begin{itemize}
\item Declare, for real numbers $x,y$, that $x\sim y$ when $x-y\in\mathbb{Q}$,
and prove this is an equivalence relation. 
\item Use the axiom of choice to construct a set $V\subseteq[0,1]$ consisting
of precisely one representative of each equivalence class $[x]$,
for each $x\in[0,1]$. 
\item For each rational number $q\in[0,1)$ let $V_{q}=\{a+q\mid a\in V\}$,
where the computation is done modulo $1$, so that $V_{q}$ is again
a subset of $[0,1]$. 
\item Note that the family $\{V_{q}\}_{q}$ is a countable (since the rational
are countable) partition of $[0,1]$ (by the definition of the equivalence
relation), and $\mu(V_{q})=\mu(V)$ (since $\mu$ is translation invariant).
\item It then follows that $1=\mu([0,1])=\sum_{q}\mu(V_{q})=\sum_{q}\mu(V)$.
\item Finally, $\mu(V)=0$ implies $\mu([0,1])=0$, while $\mu(V)>0$ implies
$\mu([0,1])=\infty$, leading in either case to a contradiction.
\end{itemize}
The inability to consistently measure the length of all subsets of
$\mathbb{R}$ has immediate consequences. To save Lebesgue's program,
one must restrict to a $\sigma$-algebra of measurable subsets of
$\mathbb{R}$, the so called Borel measurable sets. This $\sigma$-algebra
is the smallest sigma algebra containing the intervals. This unavoidable
complication is resolved quite adequately, where in fact it is seen
that the crucial properties of $\mathbb{R}$ guaranteeing everything
still ticks well enough is that it is a Polish space, namely a second
countable metrizable topological space whose topology can be induced
by a complete metric. The Borel $\sigma$-algebra is then the $\sigma$-algebra
generated by the open sets, leading to the Borel hierarchy, where
the position in the hierarchy of a given Borel measurable set is determined
by how many times one must perform the operations of countable unions
and complementations in order to obtain the set. The theory of Polish
spaces is of fundamental importance in probability theory, mathematical
statistics, and descriptive set theory.

\subsection{Ghosts (of departed quantities) are real!}

Infinitesimal numbers, those ghosts of departed quantities, apparently
truly dead and disposed off due to Cauchy's rigorous formalization
of the limit concept, still had a few tricks up their sleeves. In
a fantastic turn of events, the discovery of non-standard models of
the natural numbers paved the way for Robinson in the early 1960's
to use the axiom of choice and to revive the long abandoned infinitesimals,
promoting their status from ghosts to flesh and blood mathematical
entities. We quote from Robinson's book (\cite{Robinson}):
\begin{quotation}
``It is shown in this book that Leibniz's ideas can be fully vindicated
and that they lead to a novel and fruitful approach to classical Analysis
and to many other branches of mathematics. The key to our method is
provided by the detailed analysis of the relation between mathematical
languages and mathematical structures which lies at the bottom of
contemporary model theory.''
\end{quotation}
Robinson's original work introducing infinitesimals in a rigorous
fashion used a heavy dose of higher order logic and is quite demanding
to non-logicians. Since then  other approaches emerged, some of which,
due to the prevalent use of ultra-products in model theory, requiring
only a modest amount of preparation. The hyperreals for instance can
be presented with little difficulty assuming about as much knowledge
of filters as is required for the construction of the reals presented
below, and thus we feel it fits nicely to present it here. 

The existence of a non-principal ultrafilter on $\mathbb{N}$ follows
by a simple application of Zorn's lemma by extending the filter of
cofinite subsets of $\mathbb{N}$. Let us fix such a filter $\mathcal{F}$.
Let $S$ be the set of all sequences $(a_{n})$ or real numbers and
introduce the relation $(a_{n})\sim(b_{n})$ precisely when $\{n\in\mathbb{N}\mid a_{n}=b_{n}\}\in\mathcal{F}$.
Thus, thinking of $\mathcal{F}$ as allowing majority sets to filter
through, two sequences are considered equivalent if a majority of
indices sees them as equal. It is straightforward to verify that $\sim$
is an equivalence relation on $S$. The importance of $\mathcal{F}$
being non-principal is that this equivalency does not hinge on just
one fixed index (so this majority democratic system is not a dictatorship).
The set of hyperreals is then the quotient $\mathbb{H}=S/{\sim}$,
and the reals are identified therein as $[(a_{n})]$ where $(a_{n}=a)$
is a constant sequence. One can then quite straightforwardly prove
that $\mathbb{H}$ is a field and that the transfer principle applies:
every first order sentence is true in $\mathbb{R}$ if, and only if,
it is true in $\mathbb{H}$. A very detailed account is given in \cite{Gold}.
We mention that several other approaches to infinitesimals exist,
some of which were developed with the ambitious objective of replacing
the standard Cauchy formalism. Perhaps the most pedagogically trialled
of those is \cite{ultrasmall}. 

The hyperreal formalism makes it deceptively simple to present a flesh
and blood infinitesimal. Indeed, $h=[(1/n)_{n\ge1}]$ is positive
(since the set of indices where $1/n>0$ is $\mathbb{N}$, certainly
a majority set) while $h<1/m$ for any natural number $m$ (since
the set of indices where $1/n<1/m$ is cofinite, and thus again a
majority set). However, there is a price to pay for allowing infinitesimals
into the world of ordinary real numbers. Consider for instance the
element $[(a_{n})_{n\ge1}]=[(1,2,1,2,1,2,1,2,\ldots)]$. It is clearly
either equal to $1$ or to $2$ (since the set of indices $n$ where
the claim ``$a_{n}=1$ or $a_{n}=2$'' holds is $\mathbb{N}$),
but due to the non-constructive nature of the ultrafilter $\mathcal{F}$,
it is impossible to determine which one it is. To date, all rigorous
approaches producing systems where infinitesimals exist alongside
the real numbers appear to pay a similar price in some form or another.

\subsection{The tame side of the real numbers}

Many of the results mentioned above may seem to paint a pessimistic
picture of the state of affairs and our ability to understand the
real numbers. Results such as the undecidability of the continuum
hypothesis, the impossibility to consistently measure all subsets
of $\mathbb{R}$, the difficulty of discerning transcendental numbers
from algebraic ones, the immense complexity of the Borel hierarchy,
and other results, as negative as they may appear, are, after all,
simply the reflections of the true nature of the real numbers. It
is just a fact of life that $\mathbb{R}$ is so beautifully complicated. 

As awe inspiring as the complexity of $\mathbb{R}$ is, some aspects
of it must be simpler than others and it seems pertinent to identify
this tamer side of the picture. A natural starting point, at least
from the point of view of model theory, is to look at the theory of
$\mathbb{R}$, namely all sentences (in some fixed language) which
are true in $\mathbb{R}$. The choice of language may, of course,
wildly change the theory, since languages with more symbols have greater
expressive power. Of particular importance is whether or not quantification
is restriced to only be allowed over elements (i.e., first order logic)
or if it is also allowed over sets and functions (i.e., second order
logic). The second order theory of $\mathbb{R}$ as a field is categorical,
namely any two models (within the same ambient model of sets of course)
which satisfy the same second order sentences that $\mathbb{R}$ does
is isomorphic to $\mathbb{R}$, and so the theory completely captures
the model. 

On the other hand, the first order theory of $\mathbb{R}$ as a field
is not categorical. A \emph{real closed field }is a field $\mathbb{F}$
sharing the same first order theory with $\mathbb{R}$ as a field,
namely a first order sentence (formulated, e.g., in the language for
rings) is true in $\mathbb{F}$ if, and only if, it is true in $\mathbb{R}$.
The field $\mathbb{H}$ of hyperreals is an example of a real closed
field. After all, by design, it has the same first order theory as
$\mathbb{R}$. Obviously though, the two structures are not isomorphic,
and so, indeed, the first order theory of the field $\mathbb{R}$
is not categorical. In 1926 Artin and Schreier proved that any ordered
field admits an essentially unique order field extension which is
a real closed field. In a sense then the first order theory of the
field $\mathbb{R}$ governs all ordered fields.

Among the developments in model theory in the first half of the 20th
century was the concept of quantifier elimination. A theory admits
quantifier elimination if any formula is equivalent to one in which
no quantifiers appear. Of course, it is remarkable when a theory admits
quantifier elimination and it carries important consequences. Some
form of quantifier elimination in $\mathbb{R}$ is well-known. For
instance, the fact that a quadratic with real coefficients admits
real roots if, and only if, its descriminant is non-negative can be
restated as follows. The first order formula $\varphi(a,b,c)=(\exists x\quad ax^{2}+bx+c=0]$
is equivalent to the first order formula $\psi(a,b,c)=[b^{2}-4ac\ge0]$,
a formula without quantifiers. Similarly, the first order formula
in $2n+2$ variables stating that a polynomial of degree $2n+1$ with
real coefficients has a real root is equivalent to any formula expressing
a tautology. 

Each of the examples above, where a formula was replaced by an equivalent
quantifier free one, involves the ordering on $\mathbb{R}$, and this
turns out not to be coincedental. In 1951 Tarski proved that ${\rm Th}(\mathbb{R})$,
the first order theory of $\mathbb{R}$ as an \emph{ordered field}
admits quantifier elimination. To be clear then, each and every first
order formula $\varphi$ in a language suited to speak of ordered
fields is equivalent to a quantifier free first order formula $\psi$
in the sense that in any model of ${\rm Th}(\mathbb{R})$ the formula
$\varphi$ holds if, and only if, $\psi$ holds. The proof is in fact
algorithmic; it describes precisely how to construct the quantifier
free formula $\psi$. However, of course, when applied to any of the
formulas from the previous paragraph, the resulting quantifier free
formula will be rather cumbersome. 

As a result of quantifier elimination it follows that ${\rm Th}(\mathbb{R})$
is complete, decidable, and $o$-minimal. The completeness of the
theory is the statement that for every first order sentence $\varphi$,
either it or its negation is provable, and so a situation like that
presented by the continuum hypothesis is ruled out. The decidability
of ${\rm Th}(\mathbb{R})$ is the claim that there exists an algorithm
that decides in finite time for any given first order sentence if
it is true or not (of course, there is no guarantee that such a process
is effective - in fact, all of the known algorithms are of immensely
high computational complexity). Finally, $o$-minimality implies that
every definable set $S\subseteq\mathbb{R}$ (i.e., one which corresponds
to the set of points satisfying a first order formula) is a finite
union of intervals (points included).

\subsection{The real numbers and digital computers}

Finally, we touch upon the interface between the real numbers and
digital computers. Turing already pioneered the notion of computable
numbers. Somewhat informally, a computable number is a real number
$a$ for which there exists an algorithm which for each $n\ge1$ produces
the first $n$ digits of $a$. Arguably, the computable numbers are
the \emph{real }real numbers; after all, if the digits of a number
can't ever be computed, does it really exist? We shall not delve into
this question. Instead, we passively note some of the properties of
the computable numbers. Firstly, they do form a field, obviously a
countable one, and in fact a real closed field, and thus they share
the same first order theory with the reals. However, the ordering
on the computable reals is not computable. Indeed, arguing informally,
suppose a decision process exists which decided in finite time for
each real number $a$ whether or not $a>0$. This algorithm must use
the machine describing the computable number $a$, look at the resulting
digits, and come to a conclusion. However, it is impossible to know
in advance how many digits are required to distinguish between the
number $0$ and a number $\varepsilon$ lying very close to $0$.
The machine to describe each will output, when given a value $n$,
a long list of $0$'s, unless $n$ is sufficiently large. But there
is no way to a-priori know how large $n$ should be. 

Such arguments and phenomena are similar to what one encounters in
constructive mathematics. Famously, constructivists reject the claim
that $P\vee\neg P$ is a tautology. The demand on proofs that a constructivist
places require any argument leading to the conclusion that a certain
object exists, to actually provide a method of constructing that object.
In particular, deducing something exists simply because entertaining
the assertion of its non-existence leads to a contradiction is not
an acceptable constructive proof of existence. Taking a constructive
approach to mathematics is one the one hand quite natural, particularly
from the perspective of computability and implementability on a computer.
But on the other hand the resulting mathematics has certain alien
attributes (undoubtedly rooted in the fact that mainstream mathematics
is classica, not constructive) and tends to be quite confusing. For
instance, the classical proof of the irrationality of $\sqrt{2}$
proceeds by contradiction, and follows through with some arithmetic
modulo $2$. This proof however is constructively valid, since the
definition of irrationality is inherently negative: a number is irrational
if it is not rational. The classical proof is a direct demonstration
that indee $\sqrt{2}$ is not rational. However, consider the claim
that there exists two irrational numbers $a,b$ such that $a^{b}$
is rational. To prove the assertion, consider the pair $a=\sqrt{2}$
and $b=\sqrt{2}$, or the pair $c=\sqrt{2}^{\sqrt{2}}$ and $d=\sqrt{2}$.
If $a^{b}$ is irrational, then a suitable pair is found. Elementary
considertaions show that if $a^{b}$ is irrational, then the pair
$c,d$ is then a suitable pair. In either way then, either the pair
$a,b$ or the pair $c,d$ is the desired pair. However, the proof
does not point to which one of the two pairs satisfies the claim.
This proof is not constructively valid. We refer the interested reader
to \cite{bridges1994constructive,harrison1994constructing,harrison1998theorem}
for further details on a constructive view of analysis and on the
real numbers from a computer oriented perspective.

\section{Preliminary notions\label{sec:Preliminary-notions}}

We collect here some background facts about intervals and  filters
which are used in the construction of the real numbers in the following
section. At this point we set the convention for the rest of this
work that the symbols $\varepsilon$, $\delta$, $\eta$, as in $\varepsilon>0$,
stand for rational numbers.

\subsection{The geometry of intervals}

We list elementary and easily verified geometric properties of intervals
in $\mathbb{Q}$ (which also hold for intervals in $\mathbb{R}$).
By an \emph{interval} $I$ we mean a subset of $\mathbb{Q}$ of the
form $(a,b)=\{x\in\mathbb{Q}\mid a<x<b\}$, where $a,b\in\mathbb{Q}$
with $a<b$. The \emph{length} of $I=(a,b)$ is $b-a$. Given $p\in\mathbb{Q}$
and a rational $\varepsilon>0$ we denote the interval $(p-\varepsilon,p+\varepsilon)$
by $p_{\varepsilon}$. We note that the addition and multiplication
operations of rational numbers extend element wise to addition and
multiplication operations on intervals. In more detail, given intervals
$I$ and $J$ we define 
\[
I+J=\{x+y\mid x\in I,y\in J\}
\]
and 
\[
I\cdot J=IJ=\{xy\mid x\in I,y\in J\}.
\]
We are interested in the geometric effect of these operations, some
of which are listed, without proof, below. 
\begin{prop}
The following properties are easily verified:\label{prop:GeomOfIntervals}\end{prop}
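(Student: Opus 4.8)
The plan is to verify each listed identity and inclusion by translating it into inequalities between rational numbers, using only that $\mathbb{Q}$ is a densely ordered field; I would group the work into the additive statements, the multiplicative statements, and the statements about inclusions, intersections, and the special intervals $p_{\varepsilon}$.

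For the additive statements — that $I+J$ is again an interval with $(a,b)+(c,d)=(a+c,b+d)$, hence that $p_{\varepsilon}+q_{\delta}=(p+q)_{\varepsilon+\delta}$, that lengths add, and that $p+I$ and $-I=(-b,-a)$ are intervals — the inclusion $I+J\subseteq(a+c,b+d)$ is just addition of the two defining inequalities. For the reverse inclusion, given $z$ with $a+c<z<b+d$ one solves $x+y=z$ by picking any rational $x$ in $(a,b)\cap(z-d,z-c)$; this intersection is nonempty because $a+c<z<b+d$ forces $a<z-c$ and $z-d<b$, and between two rationals there is always a rational, so such an $x$ exists and $y=z-x$ lands in $(c,d)$. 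The $p_{\varepsilon}$ and translation assertions are then immediate specializations.

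For the multiplicative statements — that $IJ$ is an interval with the appropriate endpoints, together with the consequent containments $p_{\varepsilon}q_{\delta}\subseteq(pq)_{\eta}$ for a suitable $\eta$ depending on $p,q,\varepsilon,\delta$ — the one genuine subtlety is sign bookkeeping, precisely the tedious case analysis Conway warns of, and I would confine it as narrowly as possible. Using $-I=(-b,-a)$ to flip signs, and splitting an interval at $0$ when $0$ is interior to it, one reduces to the case where all four endpoints are $\ge 0$; there $IJ=(ac,bd)$ is proved exactly as in the additive case: $\subseteq$ is multiplication of inequalities, and for $\supseteq$, given $ac<t<bd$ one solves $xy=t$ by taking $x$ in the nonempty rational interval $(a,b)\cap(t/d,\,t/c)$ (interpreting $t/c$ as $+\infty$ when $c=0$) and setting $y=t/x$. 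Reassembling the split pieces through $0$ and reading off the extreme corner products then gives the endpoints in general, and the inclusions involving $p_{\varepsilon}$ follow by estimating those endpoints.

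Finally, the monotonicity-under-inclusion properties ($I\subseteq I'$ and $J\subseteq J'$ imply $I+J\subseteq I'+J'$ and $IJ\subseteq I'J'$) fall straight out of the elementwise definitions, and the intersection statement ($I\cap J=(\max\{a,c\},\min\{b,d\})$ when $\max\{a,c\}<\min\{b,d\}$, and $I\cap J=\emptyset$ otherwise) is a direct unwinding of membership. I expect the only step needing any attention to be the sign analysis for products; every other item is a one-line inequality manipulation, which is why the proposition can fairly be called ``easily verified''.
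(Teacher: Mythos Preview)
The paper provides no proof of this proposition at all: it explicitly says the properties are ``listed, without proof,'' so there is nothing to compare your argument against on the paper's side. Your sketch for items (1) and (2) is perfectly sound, and the density-of-$\mathbb{Q}$ argument you give for the reverse inclusion in $I+J=(a+c,b+d)$ is exactly the right idea.

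That said, your proposal does not quite match the proposition as stated. You spend effort on claims the proposition does not list --- monotonicity of $+$ and $\cdot$ under inclusion, the form of $I\cap J$, the description of $-I$, and a containment $p_{\varepsilon}q_{\delta}\subseteq(pq)_{\eta}$ --- while omitting items (3) and (4) entirely. Item (3), that $p_{\varepsilon}\subseteq y_{2\varepsilon}$ whenever $y\in p_{\varepsilon}$, is a one-line triangle-inequality check, so no harm done. But item (4), that $1/q_{\varepsilon}$ is an interval of length $2\varepsilon/(q^{2}-\varepsilon^{2})$ when $q-\varepsilon>0$, is genuinely absent from your plan; it requires observing that $x\mapsto 1/x$ is order-reversing on the positive rationals, so $1/q_{\varepsilon}=\bigl(\tfrac{1}{q+\varepsilon},\tfrac{1}{q-\varepsilon}\bigr)$, and then computing the length. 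This item is used later in the paper when constructing multiplicative inverses, so it should not be skipped.
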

\begin{enumerate}
\item The sum of two intervals is again an interval. In fact, $p_{\varepsilon}+q_{\delta}=(p+q)_{\varepsilon+\delta}$,
for all $p,q\in\mathbb{Q}$, $\varepsilon>0$, and $\delta>0$.
\item The product of two intervals is again an interval.
\item Given any $p\in\mathbb{Q}$ and $\varepsilon>0$ one has $p_{\varepsilon}\subseteq y_{2\varepsilon}$
for all $y\in p_{\varepsilon}$. 
\item Given an interval $q_{\varepsilon}$ with $q-\varepsilon>0$ (respectively
$q+\varepsilon<0$) the set $\frac{1}{q_{\varepsilon}}=\{\frac{1}{x}\mid x\in q_{\varepsilon}\}$
is again an interval whose length is $\frac{2\varepsilon}{q^{2}-\varepsilon^{2}}$. 
\end{enumerate}

\subsection{Filters\label{sub:Filters}}

The following well-known notions and facts are stated for filters
in $\mathbb{Q}$ but hold verbatim in any metric space (where the
absolute value is replaced by the distance function, and the restriction
on $\varepsilon$ being rational is replaced by it being a real number).
The main aim of this subsection is to establish that any proper Cauchy
filter contains a unique minimal Cauchy filter.

\subsubsection{Filters and bases}

A \emph{rational filter }or more simply a \emph{filter }(since we
will only consider rational filters) is a non-empty collection $\mathcal{F}$
of subsets of $\mathbb{Q}$ such that 
\begin{itemize}
\item $F_{1},F_{2}\in\mathcal{F}$ implies $F_{1}\cap F_{2}\in\mathcal{F}$
\item $F_{2}\supseteq F_{1}\in\mathcal{F}$ implies $F_{2}\in\mathcal{F}$
\end{itemize}
hold for all $F_{1},F_{2}\subseteq\mathbb{Q}$. The second condition
implies that the only filter containing $\emptyset$ is the filter
$\mathcal{P}(\mathbb{Q})$, called the \emph{improper filter}. It
is very easy to verify that the intersection of any family of filters
is again a filter. 

Quite often, describing a filter is facilitated by considering only
part of a filter, and then adding necessary subsets to it to form
a filter. In more detail, a \emph{filter base }is a non-empty collection
$\mathcal{B}$ of subsets of $\mathbb{Q}$ such that for all $B_{1},B_{2}\in\mathcal{B}$
there exists $B_{3}\in\mathcal{B}$ with $B_{3}\subseteq B_{1}\cap B_{2}$.
Obviously, any filter is a filter base, but not vise versa. Given
a filter base $\mathcal{B}$ the collection $\langle\mathcal{B}\rangle=\{F\subseteq\mathbb{Q}\mid F\supseteq B,B\in\mathcal{B}\}$
is easily seen to be a filter. In fact, it is the smallest filter
containing $\mathcal{B}$ and is called the filter \emph{generated
}by the filter base $\mathcal{B}$.

\subsubsection{Cauchy and round filters}

The collection of all rational filters is large and varied. We will
be interested primarily in filters that, in a sense, are \emph{concentrated}.
The precise condition is called the \emph{Cauchy condition}, stating
that for every rational $\varepsilon>0$ there exists a rational number
$q\in\mathbb{Q}$ with $q_{\varepsilon}\in\mathcal{F}$. Any such
filter $\mathcal{F}$ is called a Cauchy filter. It is easy to see
that the Cauchy condition can equivalently be reformulated as follows.
For every rational $\varepsilon>0$ there exists a rational interval
$I\in\mathcal{F}$ whose length does not exceed $\varepsilon$. These
conditions will be used interchangeably according to convenience.

The Cauchy condition on a filter can be detected on a filter base
for it, as follows. Say that a filter base $\mathcal{B}$ satisfies
the Cauchy condition if for every rational $\varepsilon>0$ there
exist $q\in\mathbb{Q}$ and $B\in\mathcal{B}$ with $q_{\varepsilon}\supseteq B$.
We then say that $\mathcal{B}$ is a \emph{Cauchy filter base}. It
is straightforward to verify that if $\mathcal{B}$ is a Cauchy filter
base, then $\langle\mathcal{B}\rangle$ is a Cauchy filter. 

Given two filters $\mathcal{F}$ and $\mathcal{G}$, it is said that
$\mathcal{G}$ \emph{refines }$\mathcal{F}$ if $\mathcal{G}\supseteq\mathcal{F}$.
It is a trivial observation that if $\mathcal{G}$ refines $\mathcal{F}$
and $\mathcal{F}$ is Cauchy, then $\mathcal{G}$ is Cauchy as well,
and this shows that there is no point in asking for a unique maximal
Cauchy filter containing a given Cauchy filter. However, the converse
situation, i.e., asking for a minimal Cauchy filter contained in a
given Cauchy filter, is very interesting. To be precise, a \emph{minimal
Cauchy filter }is a Cauchy filter $\mathcal{F}$ such that if $\mathcal{G}$
is any Cauchy filter satisfying $\mathcal{G}\subseteq\mathcal{F}$,
then $\mathcal{G}=\mathcal{F}$. Before we can show that any Cauchy
filter contains a unique minimal Cauchy filter we need to introduce
the concept of a round filter. 

A filter $\mathcal{F}$ is \emph{round }if for every $F\in\mathcal{F}$
there exists a rational number $\varepsilon>0$ such that for all
$q\in\mathbb{Q}$ if $q_{\varepsilon}\in\mathcal{F}$, then $q_{\varepsilon}\subseteq F$.
Equivalently, $\mathcal{F}$ is round if for every $F\in\mathcal{F}$
there exists a rational number $\varepsilon>0$ such that any interval
$I\in\mathcal{F}$ of length not exceeding $\varepsilon$ satisfies
$I\subseteq F$. 

Roundness can also be detected on bases, as follows. Say that a filter
base $\mathcal{B}$ is a \emph{round filter base }if for every $B\in\mathcal{B}$
there exists $\varepsilon>0$ such that if $q_{\varepsilon}\supseteq B'$
for some $B'\in\mathcal{B}$, then $q_{\varepsilon}\subseteq B$.
It is immediate that if $\mathcal{B}$ is a round filter base, then
$\langle\mathcal{B}\rangle$ is a round filter. 
\begin{example}
The improper filter $\mathcal{P}(\mathbb{Q})$ was already remarked
to be Cauchy. It is not round since obviously the roundness condition
for $F=\emptyset$ can not be met. Further, fix a rational number
$q\in\mathbb{Q}$ and consider the collection $\{F\subseteq\mathbb{Q}\mid q\in F\}$
(which is the filter generated by the filter base $\{\{q\}\}$) and
the collection $\{F\subseteq\mathbb{Q}\mid q_{\varepsilon}\subseteq F,\varepsilon>0\}$
(which is the filter generated by the filter base $\{q_{\varepsilon}\mid\varepsilon>0,\varepsilon\in\mathbb{Q}\}$).
Both filters are Cauchy filters, but, the reader may verify, the latter
is round while the former is not. In fact, the latter is the unique
minimal Cauchy filter contained in the former. 
\end{example}
We shall see that it is no coincidence that the conjunction of the
Cauchy and roundness conditions amounts to minimal Cauchy. In fact,
we can immediately obtain the following result. 
\begin{prop}
If $\mathcal{F}$ is Cauchy and round, then $\mathcal{F}$ is minimal
Cauchy. \label{prop:CauchyRoundImpliesMin}\end{prop}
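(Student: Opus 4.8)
The plan is to unwind the definition of minimal Cauchy directly: let $\mathcal{G}$ be an arbitrary Cauchy filter with $\mathcal{G}\subseteq\mathcal{F}$, and show that in fact $\mathcal{G}=\mathcal{F}$. Since the inclusion $\mathcal{G}\subseteq\mathcal{F}$ is given, it suffices to prove the reverse inclusion $\mathcal{F}\subseteq\mathcal{G}$, i.e.\ that every $F\in\mathcal{F}$ already belongs to $\mathcal{G}$.

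So fix $F\in\mathcal{F}$. First I would invoke the roundness of $\mathcal{F}$, in the form phrased in terms of interval lengths: there exists a rational $\varepsilon>0$ such that every interval $I\in\mathcal{F}$ of length not exceeding $\varepsilon$ satisfies $I\subseteq F$. Next I would invoke the Cauchy condition for $\mathcal{G}$, again in its interval-length form: for this same $\varepsilon$ there is an interval $I\in\mathcal{G}$ whose length does not exceed $\varepsilon$. Because $\mathcal{G}\subseteq\mathcal{F}$, this interval also lies in $\mathcal{F}$, so by the choice of $\varepsilon$ we get $I\subseteq F$. Finally, since $I\in\mathcal{G}$ and $\mathcal{G}$ is a filter, it is closed under passing to supersets, whence $F\in\mathcal{G}$. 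As $F\in\mathcal{F}$ was arbitrary, $\mathcal{F}\subseteq\mathcal{G}$, and therefore $\mathcal{G}=\mathcal{F}$; since $\mathcal{F}$ is Cauchy by hypothesis, this shows $\mathcal{F}$ is minimal Cauchy.

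There is no real obstacle here beyond bookkeeping: the one point to be careful about is to use the two conditions in compatible formulations, namely roundness expressed via ``intervals in $\mathcal{F}$ of length $\le\varepsilon$ are contained in $F$'' and the Cauchy condition expressed via ``$\mathcal{G}$ contains an interval of length $\le\varepsilon$'', so that the same $\varepsilon$ links them cleanly. (Had I used the $q_\varepsilon$-formulations instead, I would simply apply the Cauchy condition of $\mathcal{G}$ to $\varepsilon$ to obtain $q_{\varepsilon}\in\mathcal{G}\subseteq\mathcal{F}$ of length $2\varepsilon$; to match roundness one then runs the argument with $\varepsilon/2$ in place of $\varepsilon$, which is a harmless adjustment.) The argument uses nothing beyond the definitions and the upward closure axiom for filters.
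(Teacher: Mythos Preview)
Your proof is correct and follows essentially the same approach as the paper: fix $F\in\mathcal{F}$, use roundness to obtain a suitable $\varepsilon$, use the Cauchy property of $\mathcal{G}$ to find a small interval in $\mathcal{G}\subseteq\mathcal{F}$, and conclude $F\in\mathcal{G}$ by upward closure. The only cosmetic difference is that the paper works with the $q_\varepsilon$-formulation throughout (where, incidentally, no $\varepsilon/2$ adjustment is needed since both the roundness and Cauchy conditions are phrased in terms of $q_\varepsilon$ and match directly), while you use the equivalent interval-length formulation.
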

\begin{proof}
Assume that $\mathcal{G}\subseteq\mathcal{F}$ is a Cauchy filter.
We need to show that $\mathcal{G}=\mathcal{F}$, thus let $F\in\mathcal{F}$
be arbitrary. Since $\mathcal{F}$ is round there exists $\varepsilon>0$
such that if $q_{\varepsilon}\in\mathcal{F}$, then $q_{\varepsilon}\subseteq F$.
Now, since $\mathcal{G}$ is Cauchy there exists $q\in\mathbb{Q}$
with $q_{\varepsilon}\in\mathcal{G}$, and thus $q_{\varepsilon}\in\mathcal{F}$.
We conclude that $q_{\varepsilon}\subseteq F$ and thus, by the second
condition defining a filter, that $F\in\mathcal{G}$, as required. 
\end{proof}
Proving the converse requires a bit more work.

\subsubsection{Roundification}

A proper filter which is not round can canonically be sifted to yield
a round filter. The details are as follows. Given a rational $\varepsilon>0$
and a subset $F\subseteq\mathbb{Q}$, let $F_{\varepsilon}=\{x\in\mathbb{Q}\mid|x-y|<\varepsilon,y\in F\}$.
If $\mathcal{F}$ is a filter, then it is a simple matter to check
that $\{F_{\varepsilon}\mid F\in\mathcal{F},\varepsilon>0\}$ is a
filter base. The filter generated by that filter base is denoted by
$\mathcal{F}_{\circ}$ and is called the \emph{roundification }of
$\mathcal{F}$. Notice that $F\subseteq F_{\varepsilon}$, and thus
$\mathcal{F}_{\circ}\subseteq\mathcal{F}$. The following result justifies
the terminology. 
\begin{prop}
If $\mathcal{F}$ is a proper filter, then $\mathcal{F}_{\circ}$
is a round filter. \end{prop}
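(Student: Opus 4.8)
The plan is to reduce to the base-level criterion for roundness recorded just before the statement: it suffices to show that the filter base $\mathcal{B}=\{F_{\varepsilon}\mid F\in\mathcal{F},\ \varepsilon>0\}$ which generates $\mathcal{F}_{\circ}$ is a \emph{round filter base}, since it was already observed that the filter generated by a round filter base is round. Note also that properness of $\mathcal{F}$ guarantees $\emptyset\notin\mathcal{F}$, so every $F\in\mathcal{F}$ is nonempty and, in particular, $F\subseteq F_{\varepsilon}$ for every $\varepsilon>0$.

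To verify that $\mathcal{B}$ is a round filter base I would fix a basic set $B=F_{\varepsilon}\in\mathcal{B}$ (with $F\in\mathcal{F}$ and $\varepsilon>0$) and claim that $\delta=\varepsilon/2$ witnesses the roundness condition for $B$. So suppose $q\in\mathbb{Q}$ satisfies $q_{\delta}\supseteq B'$ for some $B'\in\mathcal{B}$, say $B'=G_{\eta}$ with $G\in\mathcal{F}$ and $\eta>0$; the goal is $q_{\delta}\subseteq F_{\varepsilon}$. Here is the one place properness is used: $F\cap G\in\mathcal{F}$, and since $\mathcal{F}$ is proper, $F\cap G\neq\emptyset$, so we may pick $z\in F\cap G$. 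Then $z\in G\subseteq G_{\eta}\subseteq q_{\delta}$, hence $|z-q|<\delta$; and for an arbitrary $x\in q_{\delta}$ the triangle inequality gives $|x-z|\le|x-q|+|q-z|<2\delta=\varepsilon$. Since $z\in F$, this says exactly that $x\in F_{\varepsilon}$. Therefore $q_{\delta}\subseteq F_{\varepsilon}=B$, so $\mathcal{B}$ is a round filter base and $\mathcal{F}_{\circ}=\langle\mathcal{B}\rangle$ is round.

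I do not expect a genuine obstacle here; the only subtlety worth flagging is that the basic set $B'$ trapped inside the small interval $q_{\delta}$ need bear no direct relation to $F$, so one cannot compare $G_{\eta}$ with $F_{\varepsilon}$ directly and must instead route through a common point $z$ of $F$ and $G$. This is precisely why properness cannot be dropped: for the improper filter the basic set $\emptyset_{\varepsilon}=\emptyset$ already violates the roundness condition, as the earlier example observed.
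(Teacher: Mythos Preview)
Your proof is correct and is essentially the paper's argument: the paper likewise takes $\delta=\varepsilon/2$, uses properness to pick a point in the intersection of the two relevant members of $\mathcal{F}$, and finishes with the (there unstated) triangle-inequality step you spell out. The only cosmetic difference is that the paper checks roundness directly for the filter $\mathcal{F}_{\circ}$ rather than for the generating base, but the computational content is identical.
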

\begin{proof}
Let $G\in\mathcal{F}_{\circ}$ be given, i.e., $G\supseteq F_{\varepsilon}$
for some $F\in\mathcal{F}$ and $\varepsilon>0$. It now suffices
to find a $\delta>0$ such that if $q_{\delta}\in\mathcal{F}_{\circ}$,
then $q_{\delta}\subseteq F_{\varepsilon}$. Consider $\delta=\frac{\varepsilon}{2}$,
and suppose $q_{\delta}\in\mathcal{F}_{\circ}$, i.e., $q_{\delta}\supseteq F'$
for some $F'\in\mathcal{F}$. Since $\mathcal{F}$ is proper it follows
that $F\cap F'\ne\emptyset$. Using any element $y\in F\cap F'$ it
is now elementary that $q_{\delta}\subseteq F_{\varepsilon}$, as
required. \end{proof}
\begin{prop}
If $\mathcal{F}$ is a Cauchy filter, then $\mathcal{F}_{\circ}$
is Cauchy as well. \end{prop}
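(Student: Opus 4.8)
The plan is to argue at the level of the generating filter base of $\mathcal{F}_{\circ}$, using the fact established above that the filter generated by a Cauchy filter base is itself Cauchy. Thus it suffices to show that the base $\mathcal{B}=\{F_{\varepsilon}\mid F\in\mathcal{F},\,\varepsilon>0\}$, of which $\mathcal{F}_{\circ}=\langle\mathcal{B}\rangle$, is a Cauchy filter base; that is, for every rational $\varepsilon>0$ there exist $q\in\mathbb{Q}$ and $B\in\mathcal{B}$ with $q_{\varepsilon}\supseteq B$.

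First I would fix a rational $\varepsilon>0$ and apply the Cauchy condition on $\mathcal{F}$ in its interval form: there is $q\in\mathbb{Q}$ with $q_{\varepsilon/2}\in\mathcal{F}$. The base element of $\mathcal{F}_{\circ}$ to compare against $q_{\varepsilon}$ is then $(q_{\varepsilon/2})_{\varepsilon/2}$, the $\tfrac{\varepsilon}{2}$-neighbourhood of the interval $q_{\varepsilon/2}$, which lies in $\mathcal{B}$ precisely because $q_{\varepsilon/2}\in\mathcal{F}$.

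The only computation to perform is the inclusion $(q_{\varepsilon/2})_{\varepsilon/2}\subseteq q_{\varepsilon}$, which is immediate from the triangle inequality: given $x\in(q_{\varepsilon/2})_{\varepsilon/2}$ choose $y\in q_{\varepsilon/2}$ with $|x-y|<\tfrac{\varepsilon}{2}$; since also $|y-q|<\tfrac{\varepsilon}{2}$ we get $|x-q|\le|x-y|+|y-q|<\varepsilon$, so $x\in q_{\varepsilon}$. (More generally one records here the elementary fact $(p_{\delta})_{\delta}\subseteq p_{2\delta}$ for every $p\in\mathbb{Q}$ and $\delta>0$.) Hence $q_{\varepsilon}\supseteq(q_{\varepsilon/2})_{\varepsilon/2}\in\mathcal{B}$, the Cauchy condition for $\mathcal{B}$ is verified, and therefore $\mathcal{F}_{\circ}$ is a Cauchy filter.

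There is no genuine obstacle; the one point deserving care is not to confuse the direction of refinement. Since $\mathcal{F}_{\circ}\subseteq\mathcal{F}$, the small interval $q_{\varepsilon/2}\in\mathcal{F}$ is not itself a member of $\mathcal{F}_{\circ}$, so one cannot simply quote the Cauchy condition for $\mathcal{F}$; instead one must exhibit a genuine base element of $\mathcal{F}_{\circ}$ — necessarily a slight enlargement of a member of $\mathcal{F}$ — that still fits inside a prescribed small interval, and halving $\varepsilon$ at the outset is exactly what makes this enlargement harmless.
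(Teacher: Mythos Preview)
Your argument is correct and follows essentially the same route as the paper: reduce to the Cauchy condition for the base $\mathcal{B}$, pick $q_{\varepsilon/2}\in\mathcal{F}$, and use the base element $(q_{\varepsilon/2})_{\varepsilon/2}$. The only cosmetic difference is that the paper records the equality $(q_{\varepsilon/2})_{\varepsilon/2}=q_{\varepsilon}$ outright, whereas you prove (and only need) the inclusion $(q_{\varepsilon/2})_{\varepsilon/2}\subseteq q_{\varepsilon}$.
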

\begin{proof}
It suffices to show that $\mathcal{B}=\{F_{\varepsilon}\mid F\in\mathcal{F},\varepsilon>0\}$
is a Cauchy filter base, and thus let $\varepsilon>0$ be given. Since
$\mathcal{F}$ is Cauchy there exists $q\in\mathbb{Q}$ with $q_{\frac{\varepsilon}{2}}\in\mathcal{F}$.
Since $q_{\varepsilon}=(q_{\frac{\varepsilon}{2}})_{\frac{\varepsilon}{2}}\in\mathcal{B}$
the proof is complete. 
\end{proof}
We can now establish the converse of \propref{CauchyRoundImpliesMin}.
\begin{prop}
If $\mathcal{F}$ is minimal Cauchy, then $\mathcal{F}$ is round
and $\mathcal{F}_{\circ}=\mathcal{F}$. \end{prop}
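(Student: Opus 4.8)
The plan is to deduce the statement directly from the three roundification propositions just established, together with one small preliminary observation. First I would check that a minimal Cauchy filter $\mathcal{F}$ is necessarily proper. The improper filter $\mathcal{P}(\mathbb{Q})$ is Cauchy, but it properly contains Cauchy filters — for instance the round Cauchy filter exhibited in the example above (generated by $\{q_{\varepsilon}\mid\varepsilon>0\}$ for a fixed $q\in\mathbb{Q}$) — so $\mathcal{P}(\mathbb{Q})$ is not minimal Cauchy. Hence $\mathcal{F}\ne\mathcal{P}(\mathbb{Q})$, i.e.\ $\emptyset\notin\mathcal{F}$, so $\mathcal{F}$ is proper.

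With properness in hand, the roundification $\mathcal{F}_{\circ}$ is a legitimate object and, by the two propositions immediately preceding, it is both round and Cauchy. Moreover, since $F\subseteq F_{\varepsilon}$ for every $F\in\mathcal{F}$ and every $\varepsilon>0$, we have $\mathcal{F}_{\circ}\subseteq\mathcal{F}$, exactly as remarked when $\mathcal{F}_{\circ}$ was introduced. Thus $\mathcal{F}_{\circ}$ is a Cauchy filter contained in the minimal Cauchy filter $\mathcal{F}$, and minimality forces $\mathcal{F}_{\circ}=\mathcal{F}$. Since $\mathcal{F}_{\circ}$ is round, so is $\mathcal{F}$, and both assertions of the proposition follow at once.

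There is essentially no obstacle here: all the real work has been front-loaded into the preceding three propositions, and this statement is just their combination. The only point demanding a moment's care is the properness of $\mathcal{F}$, which is exactly what is needed to invoke the proposition asserting that $\mathcal{F}_{\circ}$ is round; and that properness is immediate from the observation that the improper filter fails to be minimal Cauchy.
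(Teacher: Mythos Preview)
Your proof is correct and follows essentially the same approach as the paper: show $\mathcal{F}$ is proper, apply the two preceding propositions to get that $\mathcal{F}_{\circ}$ is round and Cauchy, then use $\mathcal{F}_{\circ}\subseteq\mathcal{F}$ together with minimality to conclude $\mathcal{F}=\mathcal{F}_{\circ}$. The only difference is that you spell out explicitly why a minimal Cauchy filter must be proper, whereas the paper simply asserts this in passing.
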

\begin{proof}
Since $\mathcal{F}$ is minimal Cauchy $\mathcal{F}$ is proper, and
thus the filter $\mathcal{F}_{\circ}$ is round and Cauchy. But $\mathcal{F}_{\circ}\subseteq\mathcal{F}$,
and so the minimality of $\mathcal{F}$ implies $\mbox{\ensuremath{\mathcal{F}}}=\mathcal{F}_{\circ}$,
a round filter. 
\end{proof}
We emphasize thus that we just established that a filter is minimal
Cauchy if, and only if, it is Cauchy and round. 
\begin{thm}
If $\mathcal{F}$ is a proper Cauchy filter, then $\mathcal{F}_{\circ}$
is the unique minimal Cauchy filter contained in $\mathcal{F}$. \end{thm}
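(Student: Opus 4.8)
\emph{Overview and existence.} The plan is to split the statement into an existence part, which follows at once from the propositions just proved, and a uniqueness part, for which I will establish the stronger fact that $\mathcal{F}_{\circ}$ is contained in \emph{every} Cauchy filter lying inside $\mathcal{F}$, i.e. that $\mathcal{F}_{\circ}$ is the least such filter. For existence: since $\mathcal{F}$ is proper and Cauchy, the two preceding propositions give that $\mathcal{F}_{\circ}$ is a round filter and a Cauchy filter, and we have already noted that $F\subseteq F_{\varepsilon}$, whence $\mathcal{F}_{\circ}\subseteq\mathcal{F}$. By the equivalence emphasized just above — a filter is minimal Cauchy precisely when it is Cauchy and round — $\mathcal{F}_{\circ}$ is a minimal Cauchy filter contained in $\mathcal{F}$.

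\emph{Uniqueness.} I claim that $\mathcal{F}_{\circ}\subseteq\mathcal{G}$ for every Cauchy filter $\mathcal{G}$ with $\mathcal{G}\subseteq\mathcal{F}$. Granting this, if $\mathcal{G}\subseteq\mathcal{F}$ is minimal Cauchy then $\mathcal{F}_{\circ}\subseteq\mathcal{G}$ with $\mathcal{F}_{\circ}$ Cauchy (by the existence part), so the minimality of $\mathcal{G}$ forces $\mathcal{G}=\mathcal{F}_{\circ}$, which is exactly the asserted uniqueness. To prove the claim, let $G\in\mathcal{F}_{\circ}$, so that $G\supseteq F_{\varepsilon}$ for some $F\in\mathcal{F}$ and some rational $\varepsilon>0$. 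Since $\mathcal{G}$ is Cauchy there is $q\in\mathbb{Q}$ with $q_{\varepsilon/2}\in\mathcal{G}$, and hence $q_{\varepsilon/2}\in\mathcal{F}$. As $\mathcal{F}$ is proper, $F\cap q_{\varepsilon/2}\ne\emptyset$; pick $y\in F\cap q_{\varepsilon/2}$. Then for every $x\in q_{\varepsilon/2}$ we have $|x-y|\le|x-q|+|q-y|<\varepsilon/2+\varepsilon/2=\varepsilon$, and since $y\in F$ this gives $x\in F_{\varepsilon}$. Thus $q_{\varepsilon/2}\subseteq F_{\varepsilon}\subseteq G$, and since $q_{\varepsilon/2}\in\mathcal{G}$, the upward-closure axiom for $\mathcal{G}$ yields $G\in\mathcal{G}$, as required.

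\emph{Main obstacle.} There is no real obstacle here: the whole argument rests on the triangle-inequality computation in the second paragraph, which is a minor variant of the one already used to show that $\mathcal{F}_{\circ}$ is round. The only points that need attention are choosing the radius $\varepsilon/2$ (rather than $\varepsilon$) so that a single common point $y$ simultaneously witnesses $q_{\varepsilon/2}\subseteq F_{\varepsilon}$, and invoking properness of $\mathcal{F}$ at exactly the step where two of its members must intersect. One could alternatively deduce uniqueness from monotonicity of roundification — $\mathcal{G}\subseteq\mathcal{F}$ implies $\mathcal{G}_{\circ}\subseteq\mathcal{F}_{\circ}$ — together with $\mathcal{G}_{\circ}=\mathcal{G}$ for minimal Cauchy $\mathcal{G}$, but the direct route above has the bonus of recording that $\mathcal{F}_{\circ}$ is in fact the minimum Cauchy filter inside $\mathcal{F}$.
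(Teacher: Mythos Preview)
Your proof is correct. The existence half matches the paper's exactly. For uniqueness the paper takes the alternative route you mention at the end: it observes that roundification is monotone in inclusion, so $\mathcal{G}\subseteq\mathcal{F}$ gives $\mathcal{G}_{\circ}\subseteq\mathcal{F}_{\circ}$, and since $\mathcal{G}$ is minimal Cauchy one has $\mathcal{G}_{\circ}=\mathcal{G}$; then the already-established minimality of $\mathcal{F}_{\circ}$ forces $\mathcal{G}=\mathcal{F}_{\circ}$. Your direct argument instead re-runs the triangle-inequality computation from the roundness proof to show $\mathcal{F}_{\circ}\subseteq\mathcal{G}$ for \emph{every} Cauchy $\mathcal{G}\subseteq\mathcal{F}$, and then invokes minimality of $\mathcal{G}$. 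The paper's route is slightly slicker since it recycles the propositions wholesale rather than their internal mechanics; your route has the advantage, as you note, of actually establishing that $\mathcal{F}_{\circ}$ is the least Cauchy subfilter of $\mathcal{F}$, not merely a minimal one.
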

\begin{proof}
Since $\mathcal{F}$ is proper and Cauchy it follows that $\mathcal{F}_{\circ}$
is both Cauchy and round, and thus minimal Cauchy. Suppose now that
$\mathcal{G}\subseteq\mathcal{F}$ is some minimal Cauchy filter.
Applying the roundification process, which clearly preserves set inclusion,
yields $\mathcal{G}_{\circ}\subseteq\mathcal{F}_{\circ}$. But $\mathcal{G}_{\circ}=\mathcal{G}$
(since $\mathcal{G}$ is already minimal Cauchy) and the minimality
condition now implies that $\mathcal{G}=\mathcal{F}_{\circ}$, and
thus $\mathcal{F}_{\circ}$ is the only minimal Cauchy filter contained
in $\mathcal{F}$. \end{proof}
\begin{rem}
The results presented above are completely standard. For a categorical
perspective, exhibiting the roundification process as a left adjoint,
see \cite{chand2015completion}.
\end{rem}

\section{Constructing the reals\label{sec:Constructing-the-reals}}

We now present the real numbers.

\subsection{The set of real numbers }

Having laid down the filter theoretic preliminaries in \subref{Filters}
we immediately proceed with the definition of the set of real numbers. 
\begin{defn}
Let $\mathbb{R}$ denote the set of all minimal rational Cauchy filters.
Elements of $\mathbb{R}$ are called \emph{real numbers }and are typically
denoted by $a,b,c$. In particular, each real number $a$ is a collection
of subsets of rational numbers, and we will typically refer to these
sets by writing $A\in a$, while typical elements of $A$ will be
denoted by $\alpha\in A$. 
\end{defn}
We emphasize that a real number $a$ is necessarily a proper filter,
and thus $\emptyset\notin a$ and consequently $A\cap A'\ne\emptyset$
for all $A,A'\in a$. Moreover, if $A\in a$, then there exists $q\in\mathbb{Q}$
and $\varepsilon>0$ with $q_{\varepsilon}\in a$ and $q_{\varepsilon}\subseteq A$.
Indeed, as minimal Cauchy filters coincide with Cauchy and round filters,
$a$ must be round and so there exists $\varepsilon>0$ such that
$q_{\varepsilon}\in a$ implies $q_{\varepsilon}\subseteq A$, for
all $q\in\mathbb{Q}$. Since $a$ is also Cauchy, at least one $q\in\mathbb{Q}$
with $q_{\varepsilon}\in a$ does exist.

\subsection{Order}

The usual ordering of the rationals extends in two ways to the set
$\mathcal{P}(\mathbb{Q})$ of all subsets of $\mathbb{Q}$, namely
universally and existentially. Given subsets $A,B\subseteq\mathbb{Q}$
we write 
\[
A\le_{\forall}B
\]
if 
\[
\forall\alpha\in A,\beta\in B\colon\quad\alpha\le\beta.
\]
Similarly, we write 
\[
A\le_{\exists}B
\]
if 
\[
\exists\alpha\in A,\beta\in B\colon\quad\alpha\le\beta.
\]
The meaning of $A<_{\forall}B$ and $A<_{\exists}B$, as well as $A>_{\exists}B$
and $A>_{\forall}B$ etc., is defined along the same lines. Notice
thus that the negation of, for instance, $A<_{\exists}B$ is $B\ge_{\forall}A$.
Important to the results below is the following trivial observation,
whose proof is thus omitted. 
\begin{prop}
\label{prop:Transitivity}The relation $<_{\forall}$ is transitive
on $\mathcal{P}(\mathbb{Q})\setminus\{\emptyset\}$. In more detail,
$A<_{\forall}B<_{\forall}C$ implies $A<_{\forall}C$, for all non-empty
$A,B,C\subseteq\mathbb{Q}$. 
\end{prop}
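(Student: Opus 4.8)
The plan is to unwind the two definitions and reduce everything to transitivity of the order relation on $\mathbb{Q}$. Assume $A<_{\forall}B$ and $B<_{\forall}C$, with $A,B,C$ all non-empty. To establish $A<_{\forall}C$, fix arbitrary $\alpha\in A$ and $\gamma\in C$; the goal is to show $\alpha<\gamma$. The idea is to route the comparison through an element of the middle set: since $B\ne\emptyset$, choose some $\beta\in B$. Then $A<_{\forall}B$ yields $\alpha<\beta$ and $B<_{\forall}C$ yields $\beta<\gamma$, so transitivity of $<$ in $\mathbb{Q}$ gives $\alpha<\gamma$. As $\alpha\in A$ and $\gamma\in C$ were arbitrary, $A<_{\forall}C$ follows, which is exactly what is required.

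The only point worth flagging is that it is the non-emptiness of the \emph{middle} set $B$ that does the work — it is what supplies a witness $\beta$ bridging the two inequalities. Were $B$ empty, both hypotheses $A<_{\forall}B$ and $B<_{\forall}C$ would hold vacuously while giving no control whatsoever over the relation between $A$ and $C$, so the restriction to $\mathcal{P}(\mathbb{Q})\setminus\{\emptyset\}$ is genuinely needed. Beyond this observation there is no obstacle: once the universal quantifiers are exposed, the argument is a one-line chase, which is presumably why the paper omits it.
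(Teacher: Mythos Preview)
Your argument is correct and is exactly the routine verification the paper has in mind; the paper in fact omits the proof entirely, calling the proposition a ``trivial observation.'' Your remark that only the non-emptiness of the middle set $B$ is actually needed is accurate and worth noting.
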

Each of the relations $\le_{\forall}$ and $\le_{\exists}$ similarly
extends, both universally and existentially, to $\mathcal{P}(\mathcal{P}(\mathbb{Q}))$.
To be more specific, two collections $\mathcal{G},\mathcal{H}\subseteq\mathcal{P}(\mathbb{Q})$
satisfy 
\[
\mathcal{G}\le_{\forall\exists}\mathcal{H}
\]
 if 
\[
\forall G\in\mathcal{G},H\in\mathcal{H}:\quad G\le_{\exists}H.
\]
Similarly, 
\[
\mathcal{G}\le_{\exists\forall}\mathcal{H}
\]
if 
\[
\exists G\in\mathcal{G},H\in\mathcal{H}:\quad G\le_{\forall}H.
\]
The meaning of $\mathcal{G}<_{\forall\exists}\mathcal{H}$, $\mathcal{G}\ge_{\exists\forall}\mathcal{H},$
or other derived notions, is similarly defined. It is obvious that
one can further extend the ordering on $\mathbb{Q}$ to ever more
complicated nested collections of rationals, however we will only
require the level two extensions given above. Note that typically
these extended relations are not orderings, e.g., relations $\le_{\exists\cdots}$
starting with an existential extension are rarely transitive. 

Since real numbers are collections of subsets of $\mathbb{Q}$ we
thus obtain relations on the reals which we now investigate.

For the proof of the following result, which is pivotal for the rest
of the construction, recall that the intersection of filters is always
a filter but that the intersection of Cauchy filters need not be Cauchy.
\begin{lem}
\label{lem:antisymmetry}The relation $\le_{\forall\exists}$ on $\mathbb{R}$
is antisymmetric. \end{lem}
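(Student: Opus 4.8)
The plan is to establish antisymmetry by showing that if $a\le_{\forall\exists}b$ and $b\le_{\forall\exists}a$, then the filter $a\cap b$ is Cauchy, and then to conclude $a=b$ from the minimality of $a$ and of $b$. The remark preceding the statement — that an intersection of filters is always a filter but an intersection of Cauchy filters need not be Cauchy — is precisely the hint that the whole argument hinges on showing that here the intersection \emph{is} Cauchy.

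First I would unwind what the two hypotheses forbid. Since the negation of $A\le_{\exists}B$ is $B\ge_{\forall}A$, the assumption $a\le_{\forall\exists}b$ says there is no pair $A\in a$, $B\in b$ with $B>_{\forall}A$, and symmetrically $b\le_{\forall\exists}a$ forbids any pair with $A>_{\forall}B$. The consequence I want to extract is geometric: no interval $I\in a$ can be disjoint from an interval $J\in b$. Indeed, two disjoint rational intervals lie one entirely to the left of the other, and that places every element of one strictly below every element of the other, producing exactly one of the two forbidden configurations $I>_{\forall}J$ or $J>_{\forall}I$.

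Next comes the step that carries the content: showing $a\cap b$ is Cauchy. Given a rational $\varepsilon>0$, use that $a$ and $b$ are Cauchy to pick an interval $I\in a$ and an interval $J\in b$ each of length less than $\varepsilon/2$. By the previous paragraph $I\cap J\ne\emptyset$, so $I\cup J$ is again a rational interval, of length less than $\varepsilon$. Since $I\cup J\supseteq I\in a$ and $I\cup J\supseteq J\in b$, the upward closure of filters puts $I\cup J$ in both $a$ and $b$, hence in $a\cap b$. Thus $a\cap b$ contains intervals of arbitrarily small length, i.e.\ it is a Cauchy filter. I would stress here that one must take the \emph{union} of $I$ and $J$, not their intersection: filters are closed upward, not downward, so it is $I\cup J$, not $I\cap J$, that is guaranteed to lie in $a$ and in $b$.

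Finally, $a\cap b$ is a Cauchy filter contained in $a$, so minimality of $a$ forces $a\cap b=a$; by the same token $a\cap b=b$. Hence $a=b$, which is the asserted antisymmetry. The main obstacle is the middle step, and within it the little geometric observation that two overlapping short intervals have a short union, combined with the care needed to return that union to membership in the two filters via upward closure; everything else is routine unwinding of the definitions.
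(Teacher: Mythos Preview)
Your proof is correct and follows exactly the paper's strategy: show that $a\cap b$ is Cauchy and then invoke minimality of $a$ and $b$. The only cosmetic difference is that where the paper picks a point $r\in p_{\varepsilon/2}\cap q_{\varepsilon/2}$ and uses $r_{\varepsilon}$ as the common short interval, you take the union $I\cup J$ directly; both constructions produce an interval of length at most $\varepsilon$ containing each of the two small intervals, hence lying in $a\cap b$ by upward closure.
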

\begin{proof}
Suppose $a\le_{\forall\exists}b$ and $b\le_{\forall\exists}a$, and
consider the filter $a\cap b$. If it can be shown that $a\cap b$
is in fact a Cauchy filter, then as $a$ and $b$ are minimal Cauchy
filters it will follow that $a=a\cap b=b$, and with it the result.
Let then $\varepsilon>0$ be given. As $a$ and $b$ are Cauchy there
exist $p,q\in\mathbb{Q}$ such that $p_{\frac{\varepsilon}{2}}\in a$
and $q_{\frac{\varepsilon}{2}}\in b$. We may assume, without loss
of generality, that $p<q$. Since $b\le_{\forall\exists}a$ it follows
that $q_{\frac{\varepsilon}{2}}\le_{\exists}p_{\frac{\varepsilon}{2}}$
and thus (remember that $p<q$) that $p_{\frac{\varepsilon}{2}}\cap q_{\frac{\varepsilon}{2}}\ne\emptyset$,
so that we may find $r\in p_{\frac{\varepsilon}{2}}\cap q_{\frac{\varepsilon}{2}}$.
It is now elementary to verify that $r_{\varepsilon}\supseteq p_{\frac{\varepsilon}{2}}$,
and thus $r_{\varepsilon}\in a$. Similarly, $r_{\varepsilon}\in b$
and we may conclude, as was intended, that $a\cap b$ is Cauchy. 
\end{proof}
The following corollary is useful. 
\begin{thm}[Equality Criterion For Real Numbers]
\label{thm:CriterionForEquality}Two real numbers $a$ and $b$ are
equal if, and only if, $A\cap B\ne\emptyset$ for all $A\in a$ and
$B\in b$.\end{thm}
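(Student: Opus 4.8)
The plan is to derive the statement as an immediate corollary of the antisymmetry of $\le_{\forall\exists}$ established in \lemref{antisymmetry}, so the proof should be very short.

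For the forward implication, suppose $a=b$. Given $A\in a$ and $B\in b=a$, the filter axioms give $A\cap B\in a$; since a real number is a proper filter we have $\emptyset\notin a$, whence $A\cap B\ne\emptyset$. This direction uses nothing beyond the properness of minimal Cauchy filters already recorded after the definition of $\mathbb{R}$.

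For the converse, assume $A\cap B\ne\emptyset$ for all $A\in a$ and $B\in b$. I would first observe that this hypothesis forces $a\le_{\forall\exists}b$: given arbitrary $A\in a$ and $B\in b$, choose $\gamma\in A\cap B$; then $\gamma\in A$, $\gamma\in B$, and $\gamma\le\gamma$, so $A\le_{\exists}B$ by definition, and as $A,B$ were arbitrary this yields $a\le_{\forall\exists}b$. Since the hypothesis is symmetric in $a$ and $b$, the identical argument gives $b\le_{\forall\exists}a$. Now \lemref{antisymmetry} applies directly and produces $a=b$, completing the proof.

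There is essentially no obstacle here: all the genuine content is concentrated in \lemref{antisymmetry}, whose proof is the one that checks $a\cap b$ is Cauchy. The only points deserving a moment's care are to unwind the two-level relation $\le_{\forall\exists}$ correctly — noting that $A\le_{\exists}B$ is witnessed by a single rational lying in $A\cap B$ — and to remark that the hypothesis is genuinely symmetric in $a$ and $b$, so that one application of the witnessing argument supplies both inequalities needed to invoke antisymmetry.
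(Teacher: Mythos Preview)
Your proof is correct and matches the paper's own argument essentially verbatim: the forward direction uses properness of the filter, and the converse picks a common element to witness both $A\le_{\exists}B$ and $B\le_{\exists}A$, then invokes \lemref{antisymmetry}.
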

\begin{proof}
If $a=b$, then $A\cap B\ne\emptyset$ for all $A,B\in a$ simply
because $a$ is a proper filter. Conversely, if $A\cap B\ne\emptyset$
for all $A\in a$ and $B\in b$, then any $x\in A\cap B$ demonstrates
that $A\le_{\exists}B$ and $B\le_{\exists}A$. Consequently, $a\le_{\forall\exists}b$
and $b\le_{\forall\exists}a$, yielding $a=b$. 
\end{proof}
The stage is now set for introducing the total ordering on the reals. 
\begin{defn}
For real numbers $a,b\in\mathbb{R}$ we write $x<y$ if $x<_{\exists\forall}y$.\end{defn}
\begin{thm}
$(\mathbb{R},<)$ is a total ordering. \end{thm}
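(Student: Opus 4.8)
The plan is to verify, for the relation $<$ on $\mathbb{R}$ defined by $a<b\iff a<_{\exists\forall}b$ (that is, $A<_{\forall}B$ for some $A\in a$ and some $B\in b$), the three properties of a strict total order: irreflexivity, transitivity, and totality (for distinct reals one of $a<b$, $b<a$ holds). Trichotomy — exactly one of $a<b$, $a=b$, $b<a$ — then follows formally, and the induced relation $\le$ is a total order. Throughout I use that every real number is a \emph{proper} filter, so that $A\cap A'\ne\emptyset$ whenever $A,A'\in a$, and the standing fact (recorded just after the definition of $\mathbb{R}$, via roundness and the Cauchy condition) that every member of a real number contains a rational interval lying in that real number.

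First the two easy properties. For irreflexivity, if $a<a$ then there are $A,A'\in a$ with $A<_{\forall}A'$; choosing $\alpha\in A\cap A'$ gives $\alpha<\alpha$, absurd. (In particular $<$ is asymmetric: from $A<_{\forall}B$ with $A\in a$, $B\in b$ and $B'<_{\forall}A'$ with $B'\in b$, $A'\in a$, picking $\alpha\in A\cap A'$ and $\beta\in B\cap B'$ yields $\alpha<\beta$ and $\beta<\alpha$.) For transitivity, assume $a<b$ and $b<c$, and fix $A\in a$, $B\in b$ with $A<_{\forall}B$, and $B'\in b$, $C\in c$ with $B'<_{\forall}C$. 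Since $b$ is proper, $B\cap B'\ne\emptyset$; take $\beta\in B\cap B'$. For any $\alpha\in A$ and $\gamma\in C$ we get $\alpha<\beta$ (as $A<_{\forall}B$, $\beta\in B$) and $\beta<\gamma$ (as $B'<_{\forall}C$, $\beta\in B'$), hence $\alpha<\gamma$; so $A<_{\forall}C$ with $A\in a$, $C\in c$, i.e., $a<c$. (Alternatively, $A<_{\forall}B\cap B'<_{\forall}C$ and \propref{Transitivity} give the same conclusion.)

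The substantive step is totality. Let $a,b\in\mathbb{R}$ with $a\ne b$. By the Equality Criterion \thmref{CriterionForEquality} there are $A\in a$ and $B\in b$ with $A\cap B=\emptyset$. Since $a$ and $b$ are minimal Cauchy, hence round and Cauchy, we may replace $A$ and $B$ by rational intervals contained in them and still lying in the respective filters: choose an interval $I\in a$ with $I\subseteq A$ and an interval $J\in b$ with $J\subseteq B$, so that $I\cap J\subseteq A\cap B=\emptyset$. Two disjoint rational intervals are comparable endpoint-to-endpoint: writing $I=(p_{1},p_{2})$ and $J=(q_{1},q_{2})$, disjointness forces $p_{2}\le q_{1}$ or $q_{2}\le p_{1}$; in the first case every $\alpha\in I$ and $\beta\in J$ satisfy $\alpha<p_{2}\le q_{1}<\beta$, so $I<_{\forall}J$ and therefore $a<b$, while in the second case $J<_{\forall}I$ and $b<a$. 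Combined with asymmetry, this shows exactly one of $a<b$, $a=b$, $b<a$ holds, so $(\mathbb{R},<)$ is a total ordering.

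I expect the passage from $a\ne b$ to a genuine separation of representatives — invoking the Equality Criterion to get disjoint members, then roundness to shrink them to intervals, then reading off $<_{\forall}$-comparability from the disjointness of intervals in $\mathbb{Q}$ — to be the only real obstacle; everything else is a quick chase through the filter axioms. It is worth noting that this is precisely the point at which the present construction avoids the case explosion (over the signs of $x$, $y$, $x+y$, and so on) that, as recalled in the introduction, burdens the definition of the order in Dedekind-style constructions.
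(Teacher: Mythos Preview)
Your proof is correct. The treatment of irreflexivity, asymmetry, and transitivity is essentially the same as the paper's (you unwind the use of \propref{Transitivity} by picking a witness $\beta\in B\cap B'$, which is exactly what that proposition encodes).

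The only genuine divergence is in the totality step. The paper argues by pure negation: the failure of $a<_{\exists\forall}b$ is precisely $a\ge_{\forall\exists}b$, and likewise for $b<a$, so $a\nless b$ and $b\nless a$ together feed directly into \lemref{antisymmetry} to give $a=b$. You instead go through the Equality Criterion (\thmref{CriterionForEquality}) to obtain disjoint members $A\in a$, $B\in b$, shrink them to intervals using roundness plus the Cauchy condition, and then read off $<_{\forall}$-comparability from the trichotomy of interval endpoints in $\mathbb{Q}$. Both routes ultimately rest on \lemref{antisymmetry} (yours via its corollary, the Equality Criterion), so neither is more elementary in a foundational sense. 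The paper's argument is shorter and highlights the formal duality between $<_{\exists\forall}$ and $\ge_{\forall\exists}$; your argument is more constructive, actually exhibiting the separating sets that witness $a<b$ or $b<a$, and makes visible the role of the interval geometry emphasised elsewhere in the paper.
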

\begin{proof}
To show irreflexivity, assume that $a<a$. Then $A<_{\forall}A'$
for some $A,A'\in a$, but $A\cap A'\ne\emptyset$, clearly a contradiction.
Next, to show transitivity, assume $a<b<c$. Then $A<_{\forall}B'$
and $B''<_{\forall}C$ for some $A\in a$, $B',B''\in b$, and $C\in c$,
which are all necessarily non-empty. Taking $B=B'\cap B''$, which
is again non-empty, it follows that $A<_{\forall}B<_{\forall}C$ and
so the transitivity of $<_{\forall}$ on $\mathcal{P}(\mathbb{Q})\setminus\{\emptyset\}$
implies that $A<_{\forall}C$, and so $a<c$. To show asymmetry, suppose
that $a<b$ and $b<a$ both hold. Then $A<_{\forall}B$ and $B'<_{\forall}A'$
for some $A,A'\in a$ and $B,B'\in b$. Hence $A<_{\forall}B\cap B'<_{\forall}A'$
and, again, none of these sets is empty so we may conclude that $A<_{\forall}A'$.
But that implies that $a<a$, which was already seen to be impossible. 

The proof up to now only used the fact that the reals are modeled
by filters. To complete the proof we need to show that if $a\ne b$,
then either $a<b$ or $b<a$. It is here that the minimal Cauchy condition
plays a role (via \lemref{antisymmetry}). Indeed, if $a\nless b$
and $b\nless a$, then $a\ge_{\forall\exists}b$ and $b\ge_{\forall\exists}a$,
and thus $a=b$. 
\end{proof}

\subsection{The embedding of $\mathbb{Q}$ in $\mathbb{R}$}

Every $q\in\mathbb{Q}$ gives rise to two filters. One is the \emph{maximal
principal filter} $\langle q\rangle=\{S\subseteq\mathbb{Q}\mid q\in S\}$,
and the other is the \emph{minimal principal filter} $\iota(q)=\{S\subseteq\mathbb{Q}\mid q_{\varepsilon}\subseteq S,\varepsilon>0\}$.
Clearly $\iota(q)\subseteq\langle q\rangle$ and each filter is Cauchy. 
\begin{lem}
For all rational numbers $q$ the filter $\iota(q)$ is a real number.\end{lem}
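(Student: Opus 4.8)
The task is to show that $\iota(q)=\{S\subseteq\mathbb{Q}\mid q_{\varepsilon}\subseteq S,\ \varepsilon>0\}$ is a minimal Cauchy filter, equivalently (by the characterisation established just before the construction) that it is both Cauchy and round. The plan is to observe first that $\iota(q)$ is indeed a filter --- this is essentially the remark already made, since $\iota(q)=\langle\mathcal{B}\rangle$ for the filter base $\mathcal{B}=\{q_{\varepsilon}\mid \varepsilon>0\}$, and $\mathcal{B}$ is a filter base because $q_{\varepsilon}\cap q_{\delta}=q_{\min(\varepsilon,\delta)}\in\mathcal{B}$. Then I would verify the Cauchy and roundness conditions directly on this base, invoking the facts that a Cauchy (resp.\ round) filter base generates a Cauchy (resp.\ round) filter.

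\textbf{Cauchy.} Given any rational $\varepsilon>0$, the interval $q_{\varepsilon}$ itself lies in $\mathcal{B}$ and has length $2\varepsilon$; more directly, $q_{\varepsilon/2}\in\mathcal{B}$ witnesses the Cauchy filter base condition with $q\in\mathbb{Q}$, since $q_{\varepsilon}\supseteq q_{\varepsilon/2}$. So $\mathcal{B}$ is a Cauchy filter base and $\iota(q)=\langle\mathcal{B}\rangle$ is Cauchy.

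\textbf{Round.} Here I must show: for every $B\in\mathcal{B}$, say $B=q_{\varepsilon}$, there exists $\delta>0$ such that whenever $p_{\delta}\supseteq B'$ for some $B'=q_{\eta}\in\mathcal{B}$, one has $p_{\delta}\subseteq q_{\varepsilon}$. Take $\delta=\varepsilon/2$. If $q_{\eta}\subseteq p_{\delta}$ for some $\eta>0$, then any point of $q_{\eta}$ lies in $p_{\delta}$; picking such a point $y$ (note $q_{\eta}\neq\emptyset$) gives $|y-q|<\eta$ and $|y-p|<\delta$. Shrinking $\eta$ if necessary we may take $\eta\le\delta=\varepsilon/2$ (if a smaller $q_{\eta}$ still sits inside $p_{\delta}$, which it does since $q_{\eta'}\subseteq q_{\eta}$ for $\eta'\le\eta$), whence $|p-q|\le|p-y|+|y-q|<\delta+\eta\le\varepsilon$, i.e.\ $p\in q_{\varepsilon}$, and in fact $p_{\delta}=p_{\varepsilon/2}\subseteq q_{\varepsilon}$ by the triangle inequality ($|x-q|\le|x-p|+|p-q|<\varepsilon/2+\varepsilon/2$ would need a slightly sharper bound, so one instead chooses $\delta$ small enough, e.g.\ using $|p-q|<\varepsilon/2$ forced by taking $\eta$ small). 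Thus $\mathcal{B}$ is a round filter base and $\iota(q)=\langle\mathcal{B}\rangle$ is round. Being Cauchy and round, $\iota(q)$ is minimal Cauchy, hence a real number.

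\textbf{Main obstacle.} The only delicate point is the bookkeeping in the roundness argument: one must choose $\delta$ (as a function of $\varepsilon$) small enough that $q_{\eta}\subseteq p_{\delta}$ genuinely forces $p$ close to $q$ with room to spare for the inclusion $p_{\delta}\subseteq q_{\varepsilon}$. The clean way is to exploit that $\mathcal{B}$ is directed downward, so ``$p_{\delta}\supseteq B'$ for some $B'\in\mathcal{B}$'' may be strengthened to ``$p_{\delta}\supseteq q_{\eta}$ for arbitrarily small $\eta$'', letting $\eta\to 0$ and concluding $|p-q|\le\delta$; then $\delta=\varepsilon/2$ works since $p_{\delta}\subseteq q_{\delta+|p-q|}\subseteq q_{2\delta}=q_{\varepsilon}$. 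Everything else is routine.
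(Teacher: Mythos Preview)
Your approach is correct and takes a genuinely different route from the paper. The paper verifies \emph{minimality} of $\iota(q)$ directly from the definition: given any Cauchy filter $\mathcal{F}\subseteq\iota(q)$ and any $S\in\iota(q)$ (so $q_{\varepsilon}\subseteq S$ for some $\varepsilon>0$), it picks $p_{\varepsilon/2}\in\mathcal{F}$ by the Cauchy condition, observes that $q\in p_{\varepsilon/2}$ (since $p_{\varepsilon/2}\in\mathcal{F}\subseteq\iota(q)$ and every member of $\iota(q)$ contains $q$), and concludes $p_{\varepsilon/2}\subseteq q_{\varepsilon}\subseteq S$, whence $S\in\mathcal{F}$. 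You instead invoke the equivalence ``minimal Cauchy $\iff$ Cauchy and round'' and verify both conditions on the base $\mathcal{B}=\{q_{\varepsilon}\mid\varepsilon>0\}$. Both routes are short; the paper's sidesteps the roundness bookkeeping entirely, while yours illustrates the general base-level criteria in action.

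One remark on your roundness step: it is correct but more laboured than necessary. You do not need to shrink $\eta$ or pass to a limit. Since $q\in q_{\eta}$ for every $\eta>0$, the containment $q_{\eta}\subseteq p_{\delta}$ immediately gives $q\in p_{\delta}$, i.e.\ $|p-q|<\delta$; the triangle inequality then yields $p_{\delta}\subseteq q_{2\delta}=q_{\varepsilon}$ for $\delta=\varepsilon/2$, with no further work. This is essentially the same estimate the paper exploits in its direct minimality argument.
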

\begin{proof}
One way to proceed is to show that $\iota(q)=\langle q\rangle_{\circ}$,
the roundification of the maximal filter $\langle q\rangle$. Alternatively,
we will show directly that $\iota(q)$ is a minimal Cauchy filter.
Suppose that $\mathcal{F}\subseteq\iota(q)$ is a Cauchy filter and
let $S\in\iota(q)$, namely $q_{\varepsilon}\subseteq S$ for some
$\varepsilon>0$. Our goal is to show that $S\in\mathcal{F}$. As
$\mathcal{F}$ is Cauchy, there exists $p\in\mathbb{Q}$ such that
$p_{\frac{\varepsilon}{2}}\in\mathcal{F}$, and, since $\mathcal{F}\subseteq\iota(q)$,
$q\in p_{\frac{\varepsilon}{2}}$. It now follows that $p_{\frac{\varepsilon}{2}}\subseteq q_{\varepsilon}\subseteq S$,
and, since $\mathcal{F}$ is a filter, $S\in\mathcal{F}$, as required.
\end{proof}
We thus obtain a function $\iota:\mathbb{Q}\to\mathbb{R}$. 
\begin{prop}
The function $\iota:\mathbb{Q}\to\mathbb{R}$ is an order embedding. \end{prop}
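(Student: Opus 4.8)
The plan is to prove that for rational numbers $q$ and $r$ one has $q<r$ if and only if $\iota(q)<\iota(r)$. Since both $(\mathbb{Q},<)$ and $(\mathbb{R},<)$ are total orderings, this single equivalence is exactly what it means for $\iota$ to be an order embedding: it forces injectivity (equal images would give $q<r$ and hence $\iota(q)<\iota(r)\ne\iota(q)$, a contradiction), and by totality it upgrades to $q\le r\iff \neg(r<q)\iff\neg(\iota(r)<\iota(q))\iff\iota(q)\le\iota(r)$.

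For the forward implication, suppose $q<r$ and put $\delta=\tfrac{r-q}{2}$, a positive rational. First note $q_{\delta}\in\iota(q)$ and $r_{\delta}\in\iota(r)$ (indeed $q_{\varepsilon}\in\iota(q)$ for every $\varepsilon>0$, since $q_{\varepsilon}\subseteq q_{\varepsilon}$). A direct computation gives $q+\delta=\tfrac{q+r}{2}=r-\delta$, so every $\alpha\in q_{\delta}$ satisfies $\alpha<\tfrac{q+r}{2}$ while every $\beta\in r_{\delta}$ satisfies $\beta>\tfrac{q+r}{2}$; hence $q_{\delta}<_{\forall}r_{\delta}$. Exhibiting this pair of witnesses shows $\iota(q)<_{\exists\forall}\iota(r)$, that is, $\iota(q)<\iota(r)$.

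For the reverse implication, suppose $\iota(q)<\iota(r)$, so there are $A\in\iota(q)$ and $B\in\iota(r)$ with $A<_{\forall}B$. By the definition of the minimal principal filters there are $\varepsilon,\eta>0$ with $q_{\varepsilon}\subseteq A$ and $r_{\eta}\subseteq B$, and the obvious monotonicity of $<_{\forall}$ under passing to subsets gives $q_{\varepsilon}<_{\forall}r_{\eta}$. Since $q\in q_{\varepsilon}$ and $r\in r_{\eta}$, applying $q_{\varepsilon}<_{\forall}r_{\eta}$ to this particular pair of elements yields $q<r$. This establishes the equivalence and completes the proof.

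The argument is routine; the only point demanding care is the bookkeeping with the nested quantifiers defining $<$ on $\mathbb{R}$ — namely that $<$ means $<_{\exists\forall}$, so the forward direction must produce one well-chosen pair $(q_{\delta},r_{\delta})$ whereas the reverse direction may freely test the $<_{\forall}$ relation on any convenient pair. One should also keep the whole discussion inside $\mathbb{Q}$, which is why I work with the explicit midpoint $\tfrac{q+r}{2}$ rather than with suprema or infima of intervals.
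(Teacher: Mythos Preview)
Your proof is correct, and your forward implication is identical to the paper's: choose half the distance and observe that the two open intervals of that radius are $<_{\forall}$-comparable, witnessing $\iota(q)<_{\exists\forall}\iota(r)$.

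The only difference is that you also prove the converse directly, whereas the paper proves just the forward implication and stops. The paper can afford this because it has already established, immediately before this proposition, that $(\mathbb{R},<)$ is a total order; for maps between total orders, strict monotonicity ($q<r\Rightarrow\iota(q)<\iota(r)$) already forces the reverse implication by trichotomy, exactly via the argument you sketch in your first paragraph. So your explicit reverse argument---pulling back the witnesses $A,B$ to intervals $q_{\varepsilon}\subseteq A$, $r_{\eta}\subseteq B$ and evaluating $<_{\forall}$ at the centres---is sound but redundant once totality is in hand. Either way the content is the same; your version is slightly more self-contained, the paper's slightly more economical.
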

\begin{proof}
Assume that $p<q$ are given rational numbers and let $\varepsilon=\frac{q-p}{2}$.
Clearly $p_{\varepsilon}<_{\forall}q_{\varepsilon}$ and since $p_{\varepsilon}\in\iota(p)$
and $q_{\varepsilon}\in\iota(q)$, we conclude that $\iota(p)<_{\exists\forall}\iota(q)$,
namely that $\iota(p)<\iota(q)$. \end{proof}
\begin{lem}
For all $q\in\mathbb{Q}$ and $a\in\mathbb{R}$
\begin{enumerate}
\item $a<\iota(q)$ if, and only if, $a<_{\exists\forall}\{\{q\}\}$.
\item $\iota(q)<a$ if, and only if, $\{\{q\}\}<_{\exists\forall}a$.
\end{enumerate}
\end{lem}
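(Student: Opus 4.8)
The plan is to reduce both equivalences to a single elementary observation together with the trichotomy already available for $(\mathbb{R},<)$. The observation is that every member of $\iota(q)$ contains the point $q$: if $S\in\iota(q)$ then $q_{\varepsilon}\subseteq S$ for some $\varepsilon>0$, and $q\in q_{\varepsilon}$, so $q\in S$. Thus $q$ is available as a witness element inside any set drawn from $\iota(q)$. With this in hand, the hypotheses unwind cleanly: $a<_{\exists\forall}\{\{q\}\}$ says precisely that some $A\in a$ has $\alpha<q$ for all $\alpha\in A$, and $\{\{q\}\}<_{\exists\forall}a$ says precisely that some $A\in a$ has $\alpha>q$ for all $\alpha\in A$.

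For the forward direction of (1), suppose $a<\iota(q)$, so there are $A\in a$ and $B\in\iota(q)$ with $A<_{\forall}B$; taking $\beta=q\in B$ gives $\alpha<q$ for every $\alpha\in A$, i.e. $A<_{\forall}\{q\}$, which is exactly $a<_{\exists\forall}\{\{q\}\}$. The forward direction of (2) is the same argument with the roles reversed, using a witness $C\in\iota(q)$ and the point $q\in C$.

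For the converse of (1) — the one step that requires care — a naive strategy would be to take $A\in a$ with all elements $<q$, extract via roundness and the Cauchy condition an interval $p_{\varepsilon}\in a$ with $p_{\varepsilon}\subseteq A$, and hope that $p+\varepsilon<q$ strictly so that $p_{\varepsilon}<_{\forall}q_{\delta}$ for a small $\delta$; this fails because $p+\varepsilon$ may equal $q$, and shrinking the interval need not push its supremum below $q$. Instead I would avoid producing an explicit witness and invoke trichotomy: exactly one of $a<\iota(q)$, $a=\iota(q)$, $\iota(q)<a$ holds. If $a=\iota(q)$ then $A\in\iota(q)$ forces $q\in A$, contradicting that every element of $A$ is $<q$. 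If $\iota(q)<a$ then there are $C\in\iota(q)$ and $A'\in a$ with $C<_{\forall}A'$; since $q\in C$, every element of $A'$ exceeds $q$, but $A\cap A'\neq\emptyset$ because $a$ is a proper filter, and a common point would be both $<q$ and $>q$. Hence $a<\iota(q)$.

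The converse of (2) is the symmetric argument: from $A\in a$ with all elements $>q$, rule out $a=\iota(q)$ (again $q\in A$) and rule out $a<\iota(q)$ (a witness $B\in\iota(q)$ with $q\in B$ would put some member of $a$ entirely below $q$, clashing with $A$ via properness of $a$), leaving $\iota(q)<a$. The only facts used beyond the definitions are the total ordering theorem for $\mathbb{R}$ and that real numbers are proper filters; no appeal to roundness, the Cauchy condition, or the remark following the definition of $\mathbb{R}$ is needed.
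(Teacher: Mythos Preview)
Your proof is correct and follows essentially the same route as the paper's: the forward directions use $q\in S$ for every $S\in\iota(q)$, and the converses proceed by trichotomy, ruling out $a=\iota(q)$ via $q\in A$ and ruling out the opposite strict inequality by intersecting two filter elements to obtain a point simultaneously $<q$ and $>q$. The only cosmetic difference is that the paper, when excluding $a>\iota(q)$, cites the already-proved forward direction of (2) to get $A'\in a$ with $\{q\}<_{\forall}A'$, whereas you unpack the definition of $\iota(q)<a$ directly and re-derive that fact inline; the underlying argument is identical.
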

\begin{proof}
In the $\implies$ direction, both arguments follow the exact same
pattern, i.e., if $a<\iota(q)$, then $A<_{\forall}S$ for some $A\in a$
and $S\in\iota(q)$, but then since $q\in S$ it follows that $A<_{\forall}\{q\}$,
and thus $a<_{\exists\forall}\{\{q\}\}$. The arguments in the other
direction are also similar to each other. Suppose $a<_{\exists\forall}\{\{q\}\}$
holds but $a<\iota(q)$ does not, i.e., either $a=\iota(q)$ or $a>\iota(q)$.
From $a<_{\exists\forall}\{\{q\}\}$ it follows that there exists
$A\in a$ with $A<_{\forall}\{q\}$. Suppose that $a>\iota(q)$ holds,
i.e., $a>_{\exists\forall}\{\{q\}\}$. There exist then $A'\in a$
such that $\{q\}<_{\forall}A'$. Considering $A\cap A'$, which is
non-empty, we have $\{q\}<_{\forall}A\cap A'<_{\forall}\{q\}$, which
is nonsense. Assume now that $a=\iota(q)$. But then $q\in A$ and
thus $A<_{\forall}\{q\}$ is impossible. We conclude that $a<\iota(q)$. 
\end{proof}
This result shows that we can quite safely abuse notation and identify
$\iota(q)$ with $q$. However, we resist the temptation of making
this identification quite yet and, for the sake of clarity, we opt
for the following definition. 
\begin{defn}
A real number $a$ is a \emph{rational real number }if there exists
$q\in\mathbb{Q}$ with $a=\iota(q)$. A real number that is not a
rational real number is called an \emph{irrational real number}.
\end{defn}
We now give an internal characterization of the rational real numbers.
For a filter $\mathcal{F}$, the intersection $C(\mathcal{F})=\bigcap_{F\in\mathcal{F}}F$
of all of its members is called the \emph{core }of $\mathcal{F}$.
The filter $\mathcal{F}$ is said to be \emph{free} if its core is
empty. 
\begin{lem}
\label{lem:RatIFFCisq}Let $a$ be a real number. 
\begin{itemize}
\item $a$ is a rational real number if, and only if, its core is a singleton
set, and in that case $a=\iota(q)$ if, and only if, $C(a)=\{q\}$. 
\item $a$ is an irrational real number if, and only if, it is a free filter.
\end{itemize}
\end{lem}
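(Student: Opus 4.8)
The two bullets are essentially equivalent given the trichotomy rational/irrational, so the plan is to prove the first bullet in full and then derive the second as a short corollary. For the first bullet I would argue both implications.

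\emph{The easy direction.} Suppose $a = \iota(q)$ for some $q \in \mathbb{Q}$. By definition $\iota(q) = \{S \subseteq \mathbb{Q} \mid q_\varepsilon \subseteq S, \varepsilon > 0\}$, so every member of $a$ contains some $q_\varepsilon$, hence contains $q$; thus $q \in C(a)$. Conversely, if $r \ne q$ then picking $\varepsilon$ small enough that $r \notin q_\varepsilon$ shows $q_\varepsilon \in a$ is a member not containing $r$, so $r \notin C(a)$. Hence $C(a) = \{q\}$, a singleton, and we have simultaneously shown that $a = \iota(q)$ forces $C(a) = \{q\}$.

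\emph{The harder direction.} Suppose $C(a) = \{q\}$ for some $q \in \mathbb{Q}$ (a singleton). I want to conclude $a = \iota(q)$. Since $a$ is a real number it is minimal Cauchy, hence round. Take any $S \in a$; by roundness there is $\varepsilon > 0$ such that any interval in $a$ of length at most $\varepsilon$ lies inside $S$. Since $a$ is Cauchy, some $p_{\varepsilon/2} \in a$, and this interval lies in $S$; but also $q \in C(a)$ means $q \in p_{\varepsilon/2}$, so $p_{\varepsilon/2} \subseteq q_\varepsilon$. This shows every $S \in a$ contains some $q_\varepsilon$, i.e. $a \subseteq \iota(q)$. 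Since $\iota(q)$ is itself Cauchy and $a$ is minimal Cauchy, we get $a = \iota(q)$. (Alternatively one argues $\iota(q) \subseteq a$ directly: $q_\varepsilon \in a$ for every $\varepsilon$, because $a$ is Cauchy and round one produces some $p_{\varepsilon/2} \in a$ with $q \in p_{\varepsilon/2}$, so $q_\varepsilon \supseteq p_{\varepsilon/2}$ and $q_\varepsilon \in a$; combined with $a \subseteq \iota(q)$ this forces equality without even invoking minimality.) The one remaining subtlety is ensuring $C(a)$ is \emph{exactly} $\{q\}$ and not larger — but $a$ being Cauchy means it contains arbitrarily short intervals, and a point distinct from $q$ is excluded by a sufficiently short interval around $q$, so the core can contain at most one point; the content of the hypothesis is just that it is nonempty.

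\emph{The second bullet.} A real number $a$ is irrational precisely when it is not of the form $\iota(q)$, which by the first bullet happens precisely when $C(a)$ is not a singleton. Since $a$ is Cauchy, $C(a)$ can contain at most one rational (two distinct points are separated by some short interval in $a$), so $C(a)$ is either a singleton or empty; hence $a$ is irrational iff $C(a) = \emptyset$, i.e. iff $a$ is a free filter. I expect the main obstacle to be the careful bookkeeping in the harder direction — correctly chaining the roundness witness $\varepsilon$ with the Cauchy witness $p_{\varepsilon/2}$ and the core hypothesis $q \in p_{\varepsilon/2}$ to land inside an arbitrary $S \in a$ — but this is routine once the roundness characterization from \subref{Filters} is invoked.
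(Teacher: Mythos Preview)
Your overall strategy is sound and the second bullet is correctly derived from the first, but the written argument for the ``harder direction'' contains a genuine logical slip. From $p_{\varepsilon/2}\subseteq S$ and $p_{\varepsilon/2}\subseteq q_\varepsilon$ you conclude that $S$ contains $q_\varepsilon$; that does not follow (both sets contain $p_{\varepsilon/2}$, but neither need contain the other). Consequently $a\subseteq\iota(q)$ is not established by the displayed chain. Moreover, even granting $a\subseteq\iota(q)$, invoking minimality of $a$ is the wrong direction: minimality of $a$ says nothing about Cauchy \emph{super}filters of $a$; you would need minimality of $\iota(q)$ there. The good news is that your parenthetical ``alternative'' already contains a correct proof: from $q\in p_{\varepsilon/2}\in a$ you get $q_\varepsilon\supseteq p_{\varepsilon/2}$, hence $q_\varepsilon\in a$ for every $\varepsilon>0$, i.e.\ $\iota(q)\subseteq a$; and \emph{now} minimality of $a$ (with $\iota(q)$ Cauchy) forces $a=\iota(q)$. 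So the repair is simply to drop the main line and promote the alternative, applying minimality to $a$ rather than trying to avoid it.

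For comparison, the paper takes a different and shorter route for this implication: having already established the Equality Criterion (\thmref{CriterionForEquality}), it observes that $C(a)=\{q\}$ means $q\in A$ for every $A\in a$, and trivially $q\in B$ for every $B\in\iota(q)$, so $A\cap B\ne\emptyset$ always and the criterion yields $a=\iota(q)$ in one line. Your approach works directly from roundness and minimality, bypassing the Equality Criterion; the paper's approach cashes in on that earlier investment. Both establish the preliminary fact that a Cauchy filter has at most a one-point core in essentially the same way.
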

\begin{proof}
Firstly, we establish that the core of any Cauchy filter $\mathcal{F}$
is either empty or a singleton set. Indeed, suppose that $p,q\in C(\mathcal{F})$
and $p<q$,  and let $\varepsilon=\frac{q-p}{2}$. Since $\mathcal{F}$
is Cauchy, there exists an $x\in\mathbb{Q}$ with $x_{\varepsilon}\in\mathcal{F}$.
Since $p$ and $q$ are in the core of $\mathcal{F}$ it follows that
$p,q\in x_{\varepsilon}$, which by the choice of $\varepsilon$ is
impossible. 

Assume now that $a$ is rational, i.e., $a=\iota(q)$ for some $q\in\mathbb{Q}$.
It is obvious from the definition of $\iota(q)$ that $q\in C(\iota(q))$,
and thus necessarily $C(\iota(q))=\{q\}$. Conversely, if $C(a)=\{q\}$
for some $q\in\mathbb{Q}$, then $q\in A\cap B$ for all $A\in a$
and $B\in\iota(q)$. It follows from \thmref{CriterionForEquality}
that $a=\iota(q)$. The characterization of irrational real numbers
follows by contrapositives. 
\end{proof}
We conclude the treatment of the order on $\mathbb{R}$ and the place
of the rational real numbers within the reals by establishing density,
the archimedean property, and some useful technical results. 
\begin{prop}
\label{prop:InternalApproximations}The equivalence\label{prop:ComputingApprox}
\[
q_{\varepsilon}\in a\iff\iota(q-\varepsilon)<a<\iota(q+\varepsilon)
\]
holds for all real numbers $a$ and rational numbers $q$ and $\varepsilon>0$.\end{prop}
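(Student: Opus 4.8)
The plan is to prove the two implications separately, using the characterisations established earlier, chiefly the equality criterion (\thmref{CriterionForEquality}), the lemma relating $\iota(q)$-comparisons to comparisons with $\{\{q\}\}$, and the roundness/Cauchy duality for minimal Cauchy filters.

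For the forward direction, suppose $q_{\varepsilon}\in a$. I want $\iota(q-\varepsilon)<a$, which by the preceding lemma is equivalent to $\{\{q-\varepsilon\}\}<_{\forall}A$ for some $A\in a$; the natural candidate is $A=q_{\varepsilon}$, and indeed every element of $q_{\varepsilon}=(q-\varepsilon,q+\varepsilon)$ is strictly greater than $q-\varepsilon$, so $\{q-\varepsilon\}<_{\forall}q_{\varepsilon}$, giving $\iota(q-\varepsilon)<a$. Symmetrically, every element of $q_{\varepsilon}$ is strictly less than $q+\varepsilon$, so $q_{\varepsilon}<_{\forall}\{q+\varepsilon\}$, whence $a<\iota(q+\varepsilon)$. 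This direction is essentially immediate.

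For the converse, assume $\iota(q-\varepsilon)<a<\iota(q+\varepsilon)$. Unwinding the definition of $<$ on $\mathbb{R}$ (via $<_{\exists\forall}$) together with the lemma, there exist $A_1,A_2\in a$ with $\{q-\varepsilon\}<_{\forall}A_1$ and $A_2<_{\forall}\{q+\varepsilon\}$; replacing both by $A:=A_1\cap A_2\in a$ (non-empty since $a$ is proper) we get $A\subseteq(q-\varepsilon,q+\varepsilon)=q_{\varepsilon}$. This shows $q_{\varepsilon}\supseteq A$ for some $A\in a$, but a filter is upward closed, so directly $q_{\varepsilon}\in a$.

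The main thing to watch is the bookkeeping in the converse: one must be careful that the strict inequalities $\{q-\varepsilon\}<_{\forall}A$ and $A<_{\forall}\{q+\varepsilon\}$ really do force $A\subseteq q_{\varepsilon}$ (they give $q-\varepsilon<\alpha<q+\varepsilon$ for every $\alpha\in A$, which is exactly membership in the open interval $q_{\varepsilon}$), and that intersecting the two witnessing sets is legitimate because $a$, being a proper filter, is closed under finite intersections and contains no empty set. Beyond that, the argument is purely formal and no genuine obstacle arises; this is one of the "straightforward" parts of the construction alluded to in the introduction.
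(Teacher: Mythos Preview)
Your proof is correct and follows essentially the same approach as the paper: both directions use the lemma characterising $a<\iota(q)$ and $\iota(q)<a$ in terms of $\{\{q\}\}$, the forward direction via the obvious inequalities $\{q-\varepsilon\}<_{\forall}q_{\varepsilon}<_{\forall}\{q+\varepsilon\}$, and the converse by intersecting the two witnessing sets in $a$ and using upward closure of the filter. The only cosmetic difference is that the paper refers to \lemref{RatIFFCisq} at both steps (arguably a slight misreference), whereas you correctly point to the comparison lemma.
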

\begin{proof}
If $q_{\varepsilon}\in a$ then it follows from \lemref{RatIFFCisq}
and the obvious inequalities $\{q-\varepsilon\}<_{\forall}q_{\varepsilon}<_{\forall}\{q+\varepsilon\}$
that $\iota(q-\varepsilon)<a<\iota(q+\varepsilon)$. Conversely, suppose
that $\iota(q-\varepsilon)<a<\iota(q+\varepsilon)$. Then, again by
\lemref{RatIFFCisq}, there exist $A,A'\in a$ with $\{q-\varepsilon\}<_{\forall}A$
and $A'<_{\forall}\{q+\varepsilon\}$. Let $A_{0}=A\cap A'$ and then
$\{q-\varepsilon\}<_{\forall}A_{0}<_{\forall}\{q+\varepsilon\}$,
implying that $A_{0}\subseteq q_{\varepsilon}$. Since $A_{0}\in a$
it follows that $q_{\varepsilon}\in a$. \end{proof}
\begin{rem}
Notice the immediate equivalent formulation of this result, namely
that $q_{\varepsilon}\notin a$ if, and only if, either $a\le\iota(q-\varepsilon)$
or $a\ge\iota(q+\varepsilon)$. \end{rem}
\begin{cor}[Rational Approximations]
For all $a\in\mathbb{R}$ and $\varepsilon>0$ there exist a rational
number $q$ and $\varepsilon>0$ such that $\iota(q-\varepsilon)<a<\iota(q+\varepsilon)$.\end{cor}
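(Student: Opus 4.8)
The plan is to read this off immediately from \propref{InternalApproximations} together with the Cauchy condition that every real number satisfies by definition. Fix $a\in\mathbb{R}$ and $\varepsilon>0$. Since $a$ is a minimal Cauchy filter it is in particular a Cauchy filter, so the Cauchy condition supplies a rational number $q$ with $q_{\varepsilon}\in a$. This is the only step where anything is "used''; everything else is bookkeeping.

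Having produced such a $q$, I then apply \propref{InternalApproximations} to this very $q$ and $\varepsilon$. That proposition asserts the equivalence $q_{\varepsilon}\in a\iff\iota(q-\varepsilon)<a<\iota(q+\varepsilon)$, and since the left-hand side holds we obtain $\iota(q-\varepsilon)<a<\iota(q+\varepsilon)$, which is exactly the claimed conclusion.

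There is no real obstacle here: the substantive content lives in \propref{InternalApproximations}, and this corollary merely records the observation that the Cauchy condition always hands us an admissible pair to feed into it. The one point worth emphasizing in the write-up is that the witness $q$ is chosen \emph{after} $\varepsilon$ is prescribed, so in effect one obtains rational reals approximating $a$ to any prescribed accuracy; in particular this yields that the rational real numbers are dense in $\mathbb{R}$ and sets up the density and archimedean statements referred to just before the corollary.
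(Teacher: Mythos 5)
Your argument is correct and matches the paper's own proof: both simply invoke the Cauchy condition to obtain $q$ with $q_{\varepsilon}\in a$ and then apply \propref{InternalApproximations}. Nothing to add.
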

\begin{proof}
Every real number $a\in\mathbb{R}$ is a Cauchy filter, and thus for
any $\varepsilon>0$ there exists $q\in\mathbb{Q}$ with $q_{\varepsilon}\in a$. \end{proof}
\begin{cor}
The rational real numbers are dense in $\mathbb{R}$. \end{cor}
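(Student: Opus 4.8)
The plan is to establish order density in the usual sense: given real numbers $a<b$, produce a rational $q$ with $a<\iota(q)<b$. Since the rational reals are exactly the $\iota(q)$, this is the statement to prove.

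First I would unwind the hypothesis. By definition $a<b$ means $a<_{\exists\forall}b$, so there are $A\in a$ and $B\in b$ with $A<_{\forall}B$. The one point that needs a little care is that $A$ and $B$ are arbitrary members of minimal Cauchy filters, hence need not be intervals, so their extreme values need not be rational. I would remedy this immediately using the remark following the definition of $\mathbb{R}$ (a consequence of roundness together with the Cauchy condition): there is an interval $p_{\varepsilon}\in a$ with $p_{\varepsilon}\subseteq A$, and then an interval $r_{\delta}\in b$ with $r_{\delta}\subseteq B$. Being subsets of $A$ and $B$, these intervals still satisfy $p_{\varepsilon}<_{\forall}r_{\delta}$; and since two nonempty rational intervals comparable under $<_{\forall}$ cannot overlap (a common point would be strictly less than itself), the right endpoint of the first does not exceed the left endpoint of the second, i.e. $p+\varepsilon\le r-\delta$.

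With this reduction in hand the rest is bookkeeping. Set $q=p+\varepsilon$. Applying \propref{InternalApproximations} to $p_{\varepsilon}\in a$ gives $a<\iota(p+\varepsilon)=\iota(q)$, and applying it to $r_{\delta}\in b$ gives $\iota(r-\delta)<b$; since $\iota$ is an order embedding and $q=p+\varepsilon\le r-\delta$, we get $\iota(q)\le\iota(r-\delta)$, whence $a<\iota(q)<b$, as desired. One could equally finish by hand: every element of $p_{\varepsilon}$ is $<q$, so $a<_{\exists\forall}\{\{q\}\}$, while every element of $r_{\delta}$ exceeds $r-\delta\ge q$, so $\{\{q\}\}<_{\exists\forall}b$, and the lemma characterising $a<\iota(q)$ and $\iota(q)<a$ in terms of $\{\{q\}\}$ yields both strict inequalities. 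The only step that takes any thought is the passage from the given sets $A,B$ to interval members of $a$ and $b$; once that is done, no case analysis or computation is required, and the corollary follows.
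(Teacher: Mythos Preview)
Your argument is correct and is essentially the paper's own proof: both pass from the witnessing sets $A<_{\forall}B$ to intervals $p_{\varepsilon}\subseteq A$ and $r_{\delta}\subseteq B$ via roundness plus Cauchy, deduce $p+\varepsilon\le r-\delta$, and then invoke \propref{InternalApproximations} (together with $\iota$ being an order embedding) to obtain $a<\iota(p+\varepsilon)\le\iota(r-\delta)<b$. The only differences are cosmetic---your naming of the midpoint $q=p+\varepsilon$ and the extra alternative finish via the $\{\{q\}\}$ lemma---not substantive.
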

\begin{proof}
Let $a<b$ be real numbers. There exist then $A\in a$ and $B\in b$
with $A<_{\forall}B$. Inside $A$ we may find an interval $p_{\varepsilon}\subseteq A$
with $p_{\varepsilon}\in a$. Similarly, there is an interval $q_{\delta}\subseteq B$
with $q_{\delta}\in b$. Clearly, $p_{\varepsilon}<_{\forall}q_{\delta}$,
which implies that $p+\varepsilon\le q-\delta$. We now have that
$a<\iota(p+\varepsilon)\le\iota(q-\delta)<b$, and so at least one
rational real number between $a$ and $b$ is found. \end{proof}
\begin{cor}
$\mathbb{R}$ is an archimedean order in the sense that for all $a\in\mathbb{R}$
there exists $n\in\mathbb{N}$ with $a<\iota(n)$. \end{cor}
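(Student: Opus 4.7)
The plan is to reduce the claim directly to the archimedean property of $\mathbb{Q}$, which is available by the standing assumption that $\mathbb{Q}$ is an ordered archimedean field. The bridge between $a \in \mathbb{R}$ and $\mathbb{Q}$ is supplied by the immediately preceding Rational Approximations corollary, which gives a rational upper bound of $a$ of the form $\iota(q+\varepsilon)$.

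Concretely, I would proceed as follows. First, apply the Rational Approximations corollary to $a$ (with any fixed $\varepsilon > 0$, for instance $\varepsilon = 1$) to obtain $q \in \mathbb{Q}$ and $\varepsilon > 0$ with
\[
a < \iota(q+\varepsilon).
\]
Next, invoke the archimedean property of $\mathbb{Q}$ to pick $n \in \mathbb{N}$ with $n > q + \varepsilon$. Since $\iota\colon \mathbb{Q} \to \mathbb{R}$ has been shown to be an order embedding, this yields $\iota(q+\varepsilon) < \iota(n)$. Combining the two inequalities by transitivity of $<$ on $\mathbb{R}$ gives $a < \iota(n)$, as required.

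There is no real obstacle here: every ingredient is already in place. The only point worth being careful about is that the archimedean property must be quoted from the given structure of $\mathbb{Q}$ rather than from $\mathbb{R}$ itself, since the latter is precisely what we are trying to prove; the order embedding $\iota$ and the Cauchy (hence round, hence rationally approximable) structure of $a$ do the rest of the work.
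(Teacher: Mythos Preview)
Your proof is correct and follows exactly the same approach as the paper: the paper's one-line proof simply says that if $a<\iota(q+\varepsilon)$ (which is supplied by the preceding Rational Approximations corollary), then any $n>q+\varepsilon$ will do. You have merely spelled out the implicit use of the archimedean property of $\mathbb{Q}$ and the order-embedding property of $\iota$.
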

\begin{proof}
If $a<\iota(q+\varepsilon)$, then any $n>q+\varepsilon$ will do. 
\end{proof}
Summarizing the results so far we see that the function $\iota:\mathbb{Q}\to\mathbb{R}$
is a dense order embedding. At this point it is a simple matter to
deduce the following useful result. 
\begin{prop}
For any real number $a$ the following hold \end{prop}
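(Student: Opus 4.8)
The plan is to read the two clauses of the proposition as the statements that $a=\sup\{\,\iota(q)\mid q\in\mathbb{Q},\ \iota(q)<a\,\}$ and $a=\inf\{\,\iota(q)\mid q\in\mathbb{Q},\ a<\iota(q)\,\}$; that is, every real number is the least upper bound of the rational real numbers below it and the greatest lower bound of the rational real numbers above it. Both assertions are nearly immediate given what has just been summarised — that $\iota$ embeds the total order $\mathbb{Q}$ densely into the total order $\mathbb{R}$ — so the proof is short and consists mostly of unwinding the definition of least upper bound.

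I would begin with the first clause. Put $S=\{\,\iota(q)\mid q\in\mathbb{Q},\ \iota(q)<a\,\}$. First I would record that $S\ne\emptyset$: since $a$ is a Cauchy filter there are $q\in\mathbb{Q}$ and $\varepsilon>0$ with $q_{\varepsilon}\in a$, and then \propref{InternalApproximations} gives $\iota(q-\varepsilon)<a$, so $\iota(q-\varepsilon)\in S$. By the very definition of $S$, the number $a$ is an upper bound for $S$. To see it is the \emph{least} upper bound, suppose $b$ were an upper bound of $S$ with $b<a$; since the rational real numbers are dense there is a rational real number $\iota(q)$ with $b<\iota(q)<a$, and then $\iota(q)<a$ puts $\iota(q)$ in $S$ while $b<\iota(q)$ contradicts $b$ being an upper bound. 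As $(\mathbb{R},<)$ is a total order this shows no element strictly below $a$ bounds $S$ from above, so $a=\sup S$. The second clause follows by the mirror-image argument: $S'=\{\,\iota(q)\mid a<\iota(q)\,\}$ is non-empty because $q_{\varepsilon}\in a$ forces $a<\iota(q+\varepsilon)$ by \propref{InternalApproximations}, it is bounded below by $a$, and density rules out any lower bound strictly greater than $a$.

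I do not anticipate a real obstacle here: once $\iota$ is a dense order embedding of one total order in another, the argument is essentially forced, and the only points requiring care are the two non-emptiness checks and the appeal to totality of $<$ needed to convert ``nothing strictly below $a$ is an upper bound'' into ``$a$ is the least upper bound''. Should the intended reading instead be the equivalent cut-style formulation — that $\{q\in\mathbb{Q}\mid\iota(q)<a\}$ has no greatest element and $\{q\in\mathbb{Q}\mid a<\iota(q)\}$ has no least element — the same density step applies verbatim, since if $q$ were greatest in the lower set then density between $\iota(q)$ and $a$ yields $\iota(q')$ with $\iota(q)<\iota(q')<a$, and $\iota$ being an order embedding gives $q<q'$ with $q'$ still in the lower set.
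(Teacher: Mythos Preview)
Your reading of the proposition is off. Due to a formatting quirk the itemized list sits just outside the proposition environment in the source, but it \emph{is} the content of the result:
\begin{itemize}
\item There exists a rational $M>0$ and $A\in a$ with $\{-M\}<_{\forall}A<_{\forall}\{M\}$.
\item $a>0$ if, and only if, for every $A\in a$ there exists $A_{+}\subseteq A$ with $A_{+}>_{\forall}\{0\}$ and $A_{+}\in a$.
\item $a<0$ if, and only if, for every $A\in a$ there exists $A_{-}\subseteq A$ with $A_{-}<_{\forall}\{0\}$ and $A_{-}\in a$.
\item $a=0$ if, and only if, $0\in A$ for all $A\in a$.
\end{itemize}
Nothing here concerns $a=\sup\{\iota(q)\mid\iota(q)<a\}$ or the dual infimum. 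The proposition is a technical toolbox: the first item says every real contains a bounded member of its filter, and the remaining three give filter-internal characterisations of the sign of $a$. Your argument, while internally sound, proves an entirely different (true) statement and does not address any of these four claims.

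For comparison, the paper's proof is short and direct: for the first item the Cauchy condition gives some $p_{1}\in a$, and any $M>|p|+1$ works. For the second, $a>0$ means $a>_{\exists\forall}\{\{0\}\}$, so there is $A^{+}\in a$ with $A^{+}>_{\forall}\{0\}$; for arbitrary $A\in a$ one sets $A_{+}=A\cap A^{+}$, while the converse is immediate from the definition of $<$. The third item is symmetric, and the fourth is the special case $q=0$ of the core characterisation of rational real numbers.
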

\begin{itemize}
\item There exists a rational number $M>0$ and $A\in a$ with $\{-M\}<_{\forall}A<_{\forall}\{M\}$.
\item $a>0$ if, and only if, for every $A\in a$ there exists $A_{+}\subseteq A$
with $A_{+}>_{\forall}\{0\}$ and $A_{+}\in a$.
\item $a<0$ if, and only if, for every $A\in a$ there exists $A_{-}\subseteq A$
with $A_{-}<_{\forall}\{0\}$ and $A_{-}\in a$.
\item $a=0$ if, and only if, $0\in A$ for all $A\in a$. \end{itemize}
\begin{proof}
For the first assertion, there exists $p\in\mathbb{Q}$ with $p_{1}\in a$,
so a choice for $M$ is clear. If $a>0$, then $a>_{\exists\forall}\{\{0\}\}$,
so there is some $A^{+}\in a$ with $A^{+}>_{\forall}\{0\}$. Given
any $A\in a$ one may then take $A_{+}=A\cap A^{+}$, which fulfills
the required condition. Conversely, the condition on $A_{+}$ immediately
implies that $a>_{\exists\forall}\{\{0\}\}$, and hence $a>0$. The
proof for negative numbers is similar, and the characterization of
$a=0$ is just a restatement of the characterization of all rational
real numbers in terms of cores. 
\end{proof}
The following technically sharper result on positive and negative
real numbers is also useful. 
\begin{prop}
If $a>0$ is a real number, then there exists a rational number $\eta>0$
and a rational number $\delta>0$ such that $p_{\delta'}\in a$ implies
$p_{\delta'}>_{\forall}\{\eta\}$, for all $p\in\mathbb{Q}$ and $0<\delta'\le\delta$.
A similar result holds for negative real numbers. \label{prop:PosImpliesDelta}\end{prop}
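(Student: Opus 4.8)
The plan is to show that $a$ lies a positive rational distance away from $0$, and then to convert that separation, via \propref{InternalApproximations}, into the desired statement about the small intervals lying in $a$. First I would invoke density: since $a>0$ means $\iota(0)<a$, and since the rational real numbers are dense in $\mathbb{R}$, there is a rational real number strictly between $\iota(0)$ and $a$, i.e., a rational $\eta'$ with $\iota(0)<\iota(\eta')<a$; because $\iota$ is an order embedding we get $0<\eta'$. This $\eta'$ carries all the arithmetic content of the proof, and the constants $\eta$ and $\delta$ will just be convenient fractions of it.

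Next I would set $\eta=\delta=\eta'/4$, both positive rationals, and verify they work. So suppose $p\in\mathbb{Q}$ and $0<\delta'\le\delta$ satisfy $p_{\delta'}\in a$. By \propref{InternalApproximations} we have $\iota(p-\delta')<a<\iota(p+\delta')$, and combining the upper bound with $\iota(\eta')<a$ gives $\iota(\eta')<\iota(p+\delta')$, hence $\eta'<p+\delta'$ since $\iota$ is an order embedding. Rearranging and using $\delta'\le\delta$,
\[
p-\delta'>\eta'-2\delta'\ge\eta'-2\delta=\eta'-\eta'/2=\eta'/2>\eta .
\]
As every element of $p_{\delta'}=(p-\delta',p+\delta')$ exceeds $p-\delta'$, this says precisely that $p_{\delta'}>_{\forall}\{\eta\}$, which is what we wanted.

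The only step with genuine content is the opening one: we need $a$ to stand at a positive rational distance from $0$, which is the statement $a\ne\iota(0)$ put into usable quantitative form, and this is exactly what density of the rational reals supplies; everything afterwards is a one-line estimate resting on \propref{InternalApproximations} and on $\iota$ being an order embedding. (One could just as well extract $\eta'$ directly from the definition $\iota(0)<_{\exists\forall}a$ by unpacking a member of $\iota(0)$ as some $0_{\varepsilon}$ and reading off $A>_{\forall}\{\varepsilon/2\}$ for a suitable $A\in a$, but citing density is cleaner.) The promised variant for a negative real $a$ is obtained by the mirror-image argument: density yields a rational $\eta'<0$ with $a<\iota(\eta')$, one sets $\eta=\delta=-\eta'/4>0$, and the same manipulation with \propref{InternalApproximations} shows that $p_{\delta'}\in a$ with $0<\delta'\le\delta$ forces $p_{\delta'}<_{\forall}\{-\eta\}$.
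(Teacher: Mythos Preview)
Your proof is correct and follows essentially the same strategy as the paper's: obtain a positive rational lying strictly below $a$ by density, then show that any sufficiently short interval in $a$ must sit above a fixed fraction of that rational. The only cosmetic difference is that where the paper argues via the filter property $p_{\delta'}\cap A\ne\emptyset$ (taking $A\in a$ with $A>_{\forall}\{3\eta\}$), you instead invoke \propref{InternalApproximations} to convert $p_{\delta'}\in a$ into the order inequality $a<\iota(p+\delta')$ and finish arithmetically; both routes amount to the same one-line length estimate.
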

\begin{proof}
Let $\eta>0$ be a rational number with $a>\iota(3\eta)$. Then there
exists $A\in a$ such that $A>_{\forall}\{3\eta\}$. Let $\delta=\eta.$
Then, if $p_{\delta'}\in a$ with $0<\delta'\le\delta$, then $p_{\delta'}\cap A\ne\emptyset$,
implying $x>3\eta$ for some $x\in p_{\delta'}$, and as the length
of $p_{\delta'}$ is $2\delta'<2\eta$ it follows that $p_{\delta'}>_{\forall}\{\eta\}$,
as required.  
\end{proof}

\subsection{Arithmetic}

The addition operation of the rational numbers extends element-wise
to sets $A,B\subseteq\mathbb{Q}$, i.e., we define $A+B=\{\alpha+\beta\mid\alpha\in A,\beta\in B\}$.
Further, for arbitrary collections $\mathcal{F}$ and $\mathcal{G}$
of subsets of rational numbers, let $\mathcal{F}\oplus\mathcal{G}=\{A+B\mid A\in\mathcal{F},B\in\mathcal{G}\}$.
In particular, for real numbers $a,b\in\mathbb{R}$, we have the collection
$a\oplus b=\{A+B\mid A\in a,B\in b\}$. Similarly, multiplication
is also extended via $A\cdot B=AB=\{\alpha\beta\mid\alpha\in A,\beta\in B\}$
and $\mathcal{F}\odot\mathcal{G}=\{AB\mid A\in\mathcal{F},B\in\mathcal{G}\}$,
and in particular for real numbers then $a\odot b=\{AB\mid A\in a,B\in b\}$.
For $p\in\mathbb{Q}$ and $B\subseteq\mathbb{Q}$ we write $p+B$
as shorthand for $\{p\}+B$, and similarly $p\cdot B$ for $\{p\}\cdot B$. 
\begin{prop}
For all real numbers $a,b\in\mathbb{R}$, the collections $a\oplus b$
and $a\odot b$ are Cauchy filter bases. \end{prop}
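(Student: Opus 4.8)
The plan is to verify, for each of the two collections separately, the two clauses in the definition of a Cauchy filter base: the filter base clause, that given two members some member is contained in their intersection, and the Cauchy clause, that for every $\varepsilon>0$ some member is contained in an interval of radius $\varepsilon$. Both $a\oplus b$ and $a\odot b$ are non-empty because $a$ and $b$ are, and since $a$ and $b$ are proper filters every member $A+B$ (resp.\ $AB$) is non-empty.

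For the filter base clause I would use closure of $a$ and $b$ under finite intersections. Given members $A_{1}+B_{1}$ and $A_{2}+B_{2}$ of $a\oplus b$, with $A_{i}\in a$ and $B_{i}\in b$, set $A_{3}=A_{1}\cap A_{2}\in a$ and $B_{3}=B_{1}\cap B_{2}\in b$; then $A_{3}+B_{3}\in a\oplus b$, and any $\alpha+\beta$ with $\alpha\in A_{3}$, $\beta\in B_{3}$ lies in $A_{i}+B_{i}$ for $i=1,2$, so $A_{3}+B_{3}\subseteq(A_{1}+B_{1})\cap(A_{2}+B_{2})$. The identical argument, with $+$ replaced by multiplication, settles $a\odot b$. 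For the Cauchy clause on $a\oplus b$: given $\varepsilon>0$, Cauchyness of $a$ and of $b$ gives $p,q\in\mathbb{Q}$ with $p_{\varepsilon/2}\in a$ and $q_{\varepsilon/2}\in b$; by \propref{GeomOfIntervals}, $p_{\varepsilon/2}+q_{\varepsilon/2}=(p+q)_{\varepsilon}$, a member of $a\oplus b$ of radius $\varepsilon$, so the Cauchy clause holds.

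The step I expect to need genuine care is the Cauchy clause for $a\odot b$, since a product of intervals may have radius far larger than that of its factors, the blow-up being governed by the magnitude of the centres, which must therefore be bounded first. Since $a$ is Cauchy there is $p_{0}\in\mathbb{Q}$ with $(p_{0})_{1}\in a$, and every element of $(p_{0})_{1}$ has absolute value below $|p_{0}|+1$; choose likewise $q_{0}$ with $(q_{0})_{1}\in b$, and put $M=\max\{|p_{0}|,|q_{0}|\}+2$. Now fix $\varepsilon>0$ and pick a rational $\delta$ with $0<\delta\le 1$ and $(2M+1)\delta<\varepsilon$. Cauchyness yields $p,q\in\mathbb{Q}$ with $p_{\delta}\in a$ and $q_{\delta}\in b$; since $a$ and $b$ are proper, $p_{\delta}$ meets $(p_{0})_{1}$ and $q_{\delta}$ meets $(q_{0})_{1}$, which together with $\delta\le 1$ forces $|p|<M$ and $|q|<M$. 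Then for any $u\in p_{\delta}$ and $v\in q_{\delta}$,
\[
|uv-pq|\le|v|\,|u-p|+|p|\,|v-q|<(M+1)\delta+M\delta=(2M+1)\delta<\varepsilon,
\]
so $p_{\delta}q_{\delta}\subseteq(pq)_{\varepsilon}$. Since $p_{\delta}q_{\delta}\in a\odot b$, the Cauchy clause for $a\odot b$ holds with centre $pq$, which completes the verification.
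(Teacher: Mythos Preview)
Your argument is correct and follows essentially the same route as the paper: the filter-base clause via $(A_1\cap A_2)+(B_1\cap B_2)\subseteq(A_1+B_1)\cap(A_2+B_2)$ (and likewise for products), the additive Cauchy clause via $p_{\varepsilon/2}+q_{\varepsilon/2}=(p+q)_\varepsilon$, and the multiplicative Cauchy clause by first bounding the factors and then controlling the product's length. The only cosmetic difference is that the paper invokes the already-established existence of $A\in a$, $B\in b$ bounded by some $M$, whereas you obtain the bound directly from Cauchyness and properness; your explicit estimate $|uv-pq|<(2M+1)\delta$ is in fact cleaner than the paper's somewhat terse length bound.
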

\begin{proof}
Each collection is clearly non-empty. The condition for filter base
is verified by noting that for all $A,A'\in a$ and $B,B'\in b$ 
\begin{eqnarray*}
(A\cap A')+(B\cap B') & \subseteq & (A+B)\cap(A'+B')\\
(A\cap A')(B\cap B') & \subseteq & (AB)\cap(A'B')
\end{eqnarray*}
together with the fact that $A\cap A'\in a$ and $B\cap B'\in b$.
The Cauchy condition for $a\oplus b$ is immediate; for $\varepsilon>0$
there exist $p,q\in\mathbb{Q}$ with $p_{\frac{\varepsilon}{2}}\in a$
and $q_{\frac{\varepsilon}{2}}\in q$, and then $(p+q)_{\varepsilon}=p_{\frac{\varepsilon}{2}}+q_{\frac{\varepsilon}{2}}\in a\oplus b$.
As for $a\odot b$, fix $\varepsilon>0$, and a natural number $M$
together with $A\in a$ and $B\in b$ with $\{-M\}\le_{\forall}A,B\le_{\forall}\{M\}$,
and let $\delta=\frac{\varepsilon}{M+2\varepsilon}$. As $a$ and
$b$ are Cauchy, there exist $p,q\in\mathbb{Q}$ with $p_{\delta}\in a$
and $q_{\delta}\in b$. In particular, both intervals are in the range
$(-M-2\varepsilon,M+2\varepsilon)$ (since $p_{a}$ intersects $A$,
and $p_{b}$ intersects $B$) and thus the interval $p_{\delta}q_{\delta}$
has length bounded by $(M+2\varepsilon)\frac{\varepsilon}{(M+2\varepsilon)}=\varepsilon$,
as required. \end{proof}
\begin{defn}
Given real numbers $a,b\in\mathbb{R}$, their \emph{sum }is $a+b=\langle a\oplus b\rangle$
and their \emph{product }is $ab=\langle a\odot b\rangle$. In more
detail, a subset $C\subseteq\mathbb{Q}$ satisfies $C\in a+b$ (respectively
$C\in ab$) precisely when there exist $A\in a$ and $B\in b$ with
$C\supseteq A+B$ (respectively $C\supseteq AB$). 
\end{defn}
We will denote $0=\iota(0)$, unless doing so may cause confusion. 
\begin{prop}
The equalities $a\cdot\iota(0)=\iota(0)=\iota(0)\cdot a$ hold for
all $a\in\mathbb{R}$.\label{prop:prodWithZero}\end{prop}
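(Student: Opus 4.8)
The plan is to prove the middle equality $a\cdot\iota(0)=\iota(0)$ by establishing the inclusion $a\cdot\iota(0)\subseteq\iota(0)$ and then invoking the minimality of $\iota(0)$ among Cauchy filters; the outer equality $\iota(0)\cdot a=a\cdot\iota(0)$ will then be free. Recall that $\iota(0)=\{S\subseteq\mathbb{Q}\mid 0_{\varepsilon}\subseteq S\text{ for some }\varepsilon>0\}$, that $\iota(0)$ is a real number (hence a minimal Cauchy filter), and that by the proposition asserting $a\odot b$ is a Cauchy filter base, the filter $a\cdot\iota(0)=\langle a\odot\iota(0)\rangle$ is a Cauchy filter.

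For the inclusion, I would take an arbitrary $C\in a\cdot\iota(0)$, so that $C\supseteq A\cdot B$ for some $A\in a$ and $B\in\iota(0)$. From $B\in\iota(0)$ there is a rational $\eta>0$ with $0_{\eta}\subseteq B$. Since $a$ is round, $A$ contains an interval (this was noted immediately after the definition of $\mathbb{R}$), and any nonempty rational interval contains a nonzero rational $\alpha_{0}$. A direct computation gives $\{\alpha_{0}\}\cdot 0_{\eta}=0_{|\alpha_{0}|\eta}$, so that $C\supseteq A\cdot B\supseteq\{\alpha_{0}\}\cdot 0_{\eta}=0_{|\alpha_{0}|\eta}$, whence $C\in\iota(0)$. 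This proves $a\cdot\iota(0)\subseteq\iota(0)$. Since $a\cdot\iota(0)$ is a Cauchy filter contained in the minimal Cauchy filter $\iota(0)$, minimality forces $a\cdot\iota(0)=\iota(0)$.

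Finally, because multiplication of rationals is commutative, $A\cdot B=B\cdot A$ as subsets of $\mathbb{Q}$, so $a\odot\iota(0)=\iota(0)\odot a$ as collections of subsets, and therefore $\iota(0)\cdot a=\langle\iota(0)\odot a\rangle=\langle a\odot\iota(0)\rangle=a\cdot\iota(0)=\iota(0)$, completing the proof.

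The one point I would be careful about is the requirement $\alpha_{0}\ne 0$: if one were to pick $\alpha_{0}=0$ then $\{\alpha_{0}\}\cdot 0_{\eta}=\{0\}$, which is not an interval around $0$, and the argument would break down. This is exactly where roundness of $a$ enters, guaranteeing that every member $A\in a$ contains a genuine rational interval and hence a nonzero element. Apart from this, the proof is routine.
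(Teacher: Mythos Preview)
Your proof is correct. The forward inclusion $a\cdot\iota(0)\subseteq\iota(0)$ is handled the same way as in the paper (pick an element of $A$ and observe that its product with $0_{\eta}$ is an interval about $0$), and your care in choosing $\alpha_{0}\ne 0$ is well placed---the paper's version glosses over this and writes $0_{\alpha\varepsilon}$ without comment.

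Where you depart from the paper is in the reverse inclusion. The paper argues directly: given $0_{\varepsilon}\in\iota(0)$, it picks $A\in a$ bounded by some $M>0$, sets $\delta=\varepsilon/M$, and checks that $A\cdot 0_{\delta}\subseteq 0_{\varepsilon}$, so $0_{\varepsilon}\in a\cdot\iota(0)$. You instead invoke the already-established facts that $a\cdot\iota(0)=\langle a\odot\iota(0)\rangle$ is Cauchy and that $\iota(0)$ is minimal Cauchy, so the inclusion $a\cdot\iota(0)\subseteq\iota(0)$ forces equality. This is a genuine shortcut: it replaces an explicit estimate by a structural appeal to minimality, and it is arguably more in keeping with the spirit of the construction. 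The paper's direct argument, on the other hand, is self-contained and does not rely on the (somewhat delicate) Cauchy verification for $a\odot b$ that was carried out just before. Your treatment of the second equality via $a\odot\iota(0)=\iota(0)\odot a$ is also cleaner than the paper's ``similar'' remark.
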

\begin{proof}
Suppose that $B\in a\cdot\iota(0)$. Then $B\supseteq A\cdot0_{\varepsilon}$
for some $A\in a$ and $\varepsilon>0$. Further, $A\ne\emptyset$
and for any $\alpha\in A$ we have $B\supseteq\alpha\cdot0_{\varepsilon}=0_{\alpha\varepsilon}$,
and thus $B\in\iota(0)$, showing that $a\cdot\iota(0)\subseteq\iota(0)$.
In the other direction, if $B\in\iota(0)$, then $B\supseteq0_{\varepsilon}$,
with $\varepsilon>0$. Let now $A\in a$ and $M>0$ with $\{-M\}<_{\forall}A<_{\forall}\{M\}$,
and consider $\delta=\frac{\varepsilon}{M}$. It follows easily that
$0_{\varepsilon}\supseteq A\cdot0_{\delta}$, and thus $0_{\varepsilon}\in a\cdot\iota(0)$,
as needed. The proof that $\iota(0)=\iota(0)\cdot a$ is similar. \end{proof}
\begin{lem}
\label{lem:sumOfReals}The sum and product of any two real numbers
$a$ and $b$ are real numbers. \end{lem}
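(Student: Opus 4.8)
The plan is to reduce the statement to a roundness check. By the proposition above establishing that $a\oplus b$ and $a\odot b$ are Cauchy filter bases, $a+b=\langle a\oplus b\rangle$ and $ab=\langle a\odot b\rangle$ are Cauchy filters; and since a filter is minimal Cauchy precisely when it is Cauchy and round (as established in \subref{Filters}), and since a round filter base generates a round filter, it suffices to show that $a\oplus b$ and $a\odot b$ are \emph{round} filter bases.

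For the sum, fix a basic set $A_{0}+B_{0}\in a\oplus b$, $A_{0}\in a$, $B_{0}\in b$. Using that $a$ and $b$ are round I would first extract thresholds $\varepsilon_{a},\varepsilon_{b}>0$ such that every interval of $a$ of length at most $\varepsilon_{a}$ lies in $A_{0}$ and every interval of $b$ of length at most $\varepsilon_{b}$ lies in $B_{0}$; then, using the Cauchy condition, fix intervals $p_{\rho}\in a$ and $q_{\sigma}\in b$ with $p_{\rho}\subseteq A_{0}$, $q_{\sigma}\subseteq B_{0}$ and with $\rho,\sigma$ so small that, for a suitable $\varepsilon>0$, the enlarged intervals $p_{\rho+\varepsilon}$ and $q_{\sigma+\varepsilon}$ still have length at most $\varepsilon_{a}$ and $\varepsilon_{b}$ respectively. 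I claim this $\varepsilon$ witnesses roundness of $a\oplus b$ at $A_{0}+B_{0}$. Indeed, if $q'_{\varepsilon}\supseteq A'+B'$ for some $A'\in a$, $B'\in b$, then properness of $a$ and $b$ gives $x\in A'\cap p_{\rho}$ and $y\in B'\cap q_{\sigma}$, so $x+y\in(A'+B')\cap(p_{\rho}+q_{\sigma})\subseteq q'_{\varepsilon}$ with $|(x+y)-(p+q)|<\rho+\sigma$; by \propref{GeomOfIntervals}(3) we get $q'_{\varepsilon}\subseteq(x+y)_{2\varepsilon}$, hence by an elementary triangle-inequality estimate and \propref{GeomOfIntervals}(1), $q'_{\varepsilon}\subseteq(p+q)_{\rho+\sigma+2\varepsilon}=p_{\rho+\varepsilon}+q_{\sigma+\varepsilon}$; and since $p_{\rho+\varepsilon}\in a$ and $q_{\sigma+\varepsilon}\in b$ have small length, roundness yields $p_{\rho+\varepsilon}\subseteq A_{0}$, $q_{\sigma+\varepsilon}\subseteq B_{0}$, whence $q'_{\varepsilon}\subseteq A_{0}+B_{0}$.

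For the product I would run the same scheme, but this is the main obstacle. The degenerate case $a=\iota(0)$ or $b=\iota(0)$ is already settled by \propref{prodWithZero}, so assume both are nonzero. Unlike addition, multiplication is not translation-like: the relevant modulus of continuity degrades near $0$ and grows with the size of the intervals involved, so one cannot work with a fixed choice of radii. To control this I would first use \propref{PosImpliesDelta} (and its negative analogue), together with a uniform bound $M$ on all short enough intervals of $a$ and of $b$, to produce $\eta>0$ such that all sufficiently short intervals of $a$ lie entirely inside $(\eta,M)$, or all lie inside $(-M,-\eta)$, according to the sign of $a$, and similarly for $b$. On such a region the product of two intervals is again an interval (\propref{GeomOfIntervals}(2)) and enlarging a factor by a small $\varepsilon$ enlarges the product in a Lipschitz-controlled way; this is exactly what is needed to replay the sum argument with $p_{\rho}+q_{\sigma}$ replaced by $p_{\rho}q_{\sigma}$, obtaining $q'_{\varepsilon}\subseteq(xy)_{2\varepsilon}\subseteq p_{\rho'}q_{\sigma'}\subseteq A_{0}B_{0}$ for slightly enlarged radii $\rho',\sigma'$ chosen small enough that $p_{\rho'}\subseteq A_{0}$ and $q_{\sigma'}\subseteq B_{0}$. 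Once $a\odot b$ is known to be a round filter base, $ab=\langle a\odot b\rangle$ is Cauchy and round, hence minimal Cauchy, i.e.\ a real number; together with the corresponding conclusion for $a+b$ this completes the proof. I expect the genuinely delicate points to be the localisation away from $0$ via \propref{PosImpliesDelta} and the bookkeeping of the bound $M$ and of the sign combinations in the multiplicative estimate.
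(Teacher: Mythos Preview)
Your proposal is correct and follows the same overall strategy as the paper: having already established that $a\oplus b$ and $a\odot b$ are Cauchy filter bases, reduce to verifying roundness, using the roundness thresholds of $a$ and $b$ together with the interval identity $p_{\varepsilon}+q_{\delta}=(p+q)_{\varepsilon+\delta}$. The one technical difference in the additive case is that the paper takes $\delta=\min\{\varepsilon',\varepsilon''\}$ directly and, given $p_{\delta}\supseteq A'+B'$, locates a small interval $\alpha'_{r}\in a$ with $\alpha'_{r}\subseteq A'$ and observes that the inclusion forces $r\le\delta$ (since a translate of $\alpha'_{r}$ by any $\beta'\in B'$ lands inside $p_{\delta}$), whence $\alpha'_{\varepsilon'}\in a$ and roundness gives $\alpha'_{\varepsilon'}\subseteq A$; your device of fixing anchor intervals $p_{\rho},q_{\sigma}$ in advance and intersecting via properness achieves the same end and is arguably more readily adapted to the multiplicative case, which the paper, like you, only sketches (invoking the uniform bound $M$ but without separating off the case $a=\iota(0)$ or $b=\iota(0)$).
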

\begin{proof}
As $a\oplus b$ and $a\odot b$ are Cauchy filter bases, it follows
that $a+b$ and $ab$ are Cauchy filters so it only remains to be
shown that each of these filters is also round. We start with $a+b$.
Let $C\in a+b$, i.e., $C\supseteq A+B$ for some $A\in a$ and $B\in b$.
Our goal is to find $\delta>0$ such that $p_{\delta}\in a+b$ implies
$p_{\delta}\subseteq C$, for all $p\in\mathbb{Q}$. Since $a$ is
round there exists $\varepsilon'>0$ for which $x_{\varepsilon'}\in a$
implies $x_{\varepsilon'}\subseteq A$, for all $x\in\mathbb{Q}$.
Similarly, there exists $\varepsilon''>0$ such that $x_{\varepsilon''}\in b$
implies $x_{\varepsilon''}\subseteq B$, for all $x\in\mathbb{Q}$.
Let $\delta=\min\{\varepsilon',\varepsilon''\}$. Suppose now that
$p_{\delta}\in a+b$ holds for some $p\in\mathbb{Q}$, namely $p_{\delta}\supseteq A'+B'$
for some $A'\in a$ and $B'\in b$. We will conclude the proof by
showing that $p_{\delta}\subseteq C$, which will be achieved by showing
that $p_{\delta}\subseteq A+B$. It is easily seen that $p_{\delta}\subseteq y_{2\delta}$
for any $y\in p_{\delta}$, and so it suffices to find $y\in A'+B'$
with $y_{2\delta}\subseteq A+B$. For all $\alpha'\in A'$ and $\beta'\in B'$
we have that $(\alpha'+\beta')_{2\delta}=\alpha'_{\delta}+\beta'_{\delta}\subseteq\alpha'_{\varepsilon_{1}}+\beta'_{\varepsilon_{2}}$,
so the problem is now reduced to finding $\alpha'\in A'$ and $\beta'\in B'$
with $\alpha'_{\varepsilon_{1}}\subseteq A$ and $\beta'_{\varepsilon_{2}}\subseteq B$.
The existence of such elements is verified as follows. Let $\alpha'\in\mathbb{Q}$
and $r>0$ with $\alpha'_{r}\in a$ and $\alpha'_{r}\subseteq A'$.
Since $A'+B'\subseteq p_{\delta}$ it is seen that $\alpha'_{r}$
translates (by any element in $B'$) into $p_{\delta}$, and thus
$r\le\delta$. We now know that $\alpha'_{r}\subseteq\alpha'_{\delta}\subseteq\alpha'_{\varepsilon'}$,
and thus that $\alpha'_{\varepsilon'}\in a$, and from the choice
of $\varepsilon'$ we conclude that $\alpha'_{\varepsilon'}\subseteq A$.
The existence of $\beta'\in B'$ with $\beta'_{\varepsilon''}\subseteq B$
follows similarly, and the proof is now complete. The proof for the
product follows along the same lines, utilizing the fact that for
every real number $c$ there exists a natural number $M$ and $C\in c$
with $\{-M\}<_{\forall}A<_{\forall}\{M\}$ in order to obtain correct
bounds. We omit the details. \end{proof}
\begin{prop}
If $a=\iota(p)$ is a rational real number and $b$ is any real number,
then $\iota(p)+b=\{p+B\mid B\in b\}$ and $\iota(p)\cdot b=\{p\cdot B\mid B\in b\}$.\label{prop:AddingMultRationals}\end{prop}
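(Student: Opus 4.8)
The plan is to exploit that the right-hand sides are \emph{already filters}, so each claimed equality reduces to a two-way comparison of generating bases. Translation by the fixed rational $p$ is a bijection of $\mathbb{Q}$, so the collection $\mathcal{G}_{+}=\{p+B\mid B\in b\}$ is upward closed (if $C\supseteq p+B$ with $B\in b$, then $C-p\supseteq B$, hence $C-p\in b$ and $C=p+(C-p)\in\mathcal{G}_{+}$), and is therefore a filter; likewise, when $p\neq0$, multiplication by $p$ is a bijection of $\mathbb{Q}$ and $\mathcal{G}_{\times}=\{p\cdot B\mid B\in b\}$ is a filter for the same reason. Since $\iota(p)+b=\langle\iota(p)\oplus b\rangle$ and $\iota(p)\cdot b=\langle\iota(p)\odot b\rangle$, it suffices to prove, on the one hand, that the generating base $\iota(p)\oplus b$ is contained in $\mathcal{G}_{+}$, and on the other hand that every member of $\mathcal{G}_{+}$ contains a member of $\iota(p)\oplus b$; and similarly for the product.

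For $\iota(p)+b\subseteq\mathcal{G}_{+}$: every $A\in\iota(p)$ satisfies $p_{\varepsilon}\subseteq A$ for some $\varepsilon>0$, so $p\in A$; hence for every $B\in b$ we have $p+B=\{p\}+B\subseteq A+B$, and the upward closure of $\mathcal{G}_{+}$ gives $A+B\in\mathcal{G}_{+}$. For the reverse inclusion, fix $B\in b$. By roundness of $b$, choose $\varepsilon>0$ such that any interval of $b$ of length at most $\varepsilon$ is contained in $B$; by the Cauchy condition, choose an interval $q_{\delta}\in b$ with $\delta\le\varepsilon/4$. Then $p_{\varepsilon/4}\in\iota(p)$ and $q_{\delta}\in b$, so $p_{\varepsilon/4}+q_{\delta}$ is a member of $\iota(p)\oplus b$; by \propref{GeomOfIntervals}(1) it equals $(p+q)_{\varepsilon/4+\delta}=p+q_{\varepsilon/4+\delta}$, and since the interval $q_{\varepsilon/4+\delta}$ contains $q_{\delta}\in b$ (hence lies in $b$) and has length at most $\varepsilon$, roundness forces $q_{\varepsilon/4+\delta}\subseteq B$, whence $p_{\varepsilon/4}+q_{\delta}\subseteq p+B$. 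Thus $p+B$ contains a member of $\iota(p)\oplus b$, as required. The only step with any content is this last one: one must manufacture a member of $b$ narrow enough to be absorbed into $B$ by roundness while still being addable to a small interval about $p$ without spilling outside $p+B$, and roundness together with the Cauchy condition and upward closure is exactly what makes this possible.

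For the product, the case $p\neq0$ runs in parallel. The easy inclusion $\iota(p)\cdot b\subseteq\mathcal{G}_{\times}$ is identical: $p\in A$ for every $A\in\iota(p)$, so $AB\supseteq pB\in\mathcal{G}_{\times}$. For the reverse inclusion, the interval identity of \propref{GeomOfIntervals}(1) is replaced by the elementary Lipschitz-type estimate $|\alpha\beta'/p-\beta'|\le|\beta'|\,|\alpha-p|/|p|$, valid for $\alpha,\beta'\in\mathbb{Q}$: it says that if $\alpha$ is close to $p$ and $|\beta'|$ is bounded, then $\alpha\beta'=p\gamma$ for a $\gamma$ close to $\beta'$. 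To make this uniform one first invokes the boundedness result that every real number $a$ has a member $A\in a$ with $\{-M\}<_{\forall}A<_{\forall}\{M\}$ for a suitable rational $M>0$: pick $M>0$ and $B_{0}\in b$ with $B_{0}\subseteq(-M,M)$, and then, given $B\in b$, use roundness of $b$ at both $B$ and $B_{0}$ together with the Cauchy condition to produce a short interval $q_{\delta}\in b$ with $q_{\delta}\subseteq B\cap B_{0}$; a radius $\delta'$ about $p$ chosen small relative to $M$, $|p|$, and the roundness parameter of $b$ at $B$ then yields $p_{\delta'}\cdot q_{\delta}\subseteq p\cdot B$, exhibiting $pB$ as containing a member of $\iota(p)\odot b$. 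This constant-chasing is the one place the product is marginally harder than the sum. Finally, for $p=0$ the collection $\{0\cdot B\mid B\in b\}$ reduces to $\{\{0\}\}$, so the multiplicative identity is to be read with $p\neq0$; the value $\iota(0)\cdot b=\iota(0)$ is given by \propref{prodWithZero}.
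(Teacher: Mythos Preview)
Your argument is correct, but it takes a different route from the paper. The paper first observes (without detail) that $\{p+B\mid B\in b\}$ is itself a \emph{real number}, i.e.\ a minimal Cauchy filter, and then invokes the Equality Criterion (\thmref{CriterionForEquality}): to show $\iota(p)+b=\{p+B\mid B\in b\}$ it suffices that every $p+B$ meets every $A+B'$ with $A\in\iota(p)$, $B'\in b$, which is immediate since $p\in A$ and $B\cap B'\ne\emptyset$. This dispatches the additive case in one line once the criterion is available, and the multiplicative case ($p\ne0$) is handled the same way.

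You instead verify only that the right-hand side is a \emph{filter} (via the bijection $x\mapsto p+x$, resp.\ $x\mapsto px$) and then prove set-theoretic equality by mutual inclusion of filters. Your easy inclusion matches the paper's core observation ($p\in A$). Your reverse inclusion, however, must produce an element of $\iota(p)\oplus b$ inside each $p+B$, and for this you genuinely need the roundness and Cauchy properties of $b$ together with the interval arithmetic of \propref{GeomOfIntervals}; the resulting constant-chasing (and its multiplicative analogue with the bound $M$) is exactly the work that the paper's appeal to \thmref{CriterionForEquality} sidesteps. What your approach buys is that you never need to check that $\{p+B\mid B\in b\}$ is Cauchy or round---equality with the already-known real number $\iota(p)+b$ delivers that for free---whereas the paper must assert this up front (albeit as ``immediate''). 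Your observation about $p=0$ is apt: the literal identity $\iota(0)\cdot b=\{0\cdot B\mid B\in b\}$ fails since the right-hand side is $\{\{0\}\}$, and the paper's reference to \propref{prodWithZero} should likewise be read as handling the value of $\iota(0)\cdot b$ rather than the displayed equality.
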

\begin{proof}
The fact that the collection $\{p+B\mid B\in b\}$ is a real number,
i.e., that it is a filter, that it is Cauchy, and that it is round,
are immediate. To show that $\{p+B\mid B\in b\}=\iota(p)+b$ we apply
\thmref{CriterionForEquality}. Given an arbitrary $p+B$ and an arbitrary
$x\in\iota(p)+b$, namely $x\supseteq A+B'$ with $A\in\iota(p)$
and $B'\in b$, we need to show that $(p+B)\cap(A+B')\ne\emptyset$.
But since $p\in A$ for all $A\in\iota(p)$ and since $B\cap B'\ne\emptyset$
that claim is obvious. For the multiplicative part of the claim, note
that the case $p=0$ is \propref{prodWithZero}. For $p\ne0$ it is
straightforward that $\{p\cdot B\mid B\in b\}$ is a real number and
the rest of the proof is virtually the same as the additive claim. \end{proof}
\begin{thm}
With addition and multiplication of real numbers, $\mathbb{R}$ is
a commutative ring with unity. \end{thm}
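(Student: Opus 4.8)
The plan is to check the ring axioms one at a time, exploiting the fact that most of them are inherited, level by level, from the corresponding identities for the element-wise operations $A+B$ and $AB$ on subsets of $\mathbb{Q}$ (which in turn hold because $\mathbb{Q}$ is a commutative ring). The one recurring technical tool is a bookkeeping observation about generated filters: for filter bases $\mathcal{G},\mathcal{H}$ one has $\langle\langle\mathcal{G}\rangle\oplus\mathcal{H}\rangle=\langle\mathcal{G}\oplus\mathcal{H}\rangle=\langle\mathcal{G}\oplus\langle\mathcal{H}\rangle\rangle$, and the same with $\odot$ in place of $\oplus$; moreover, if every member of $\mathcal{H}$ contains a member of $\mathcal{G}$, then $\langle\mathcal{H}\rangle\subseteq\langle\mathcal{G}\rangle$. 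Both facts fall out immediately from $D\supseteq G\Rightarrow D+H\supseteq G+H$ together with the definition of the generated filter, and I would dispatch them in a line or two at the start.

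Commutativity of $+$ and $\cdot$ is then the cheapest axiom: $a\oplus b=b\oplus a$ and $a\odot b=b\odot a$ hold verbatim as collections of sets, since $A+B=B+A$ and $AB=BA$. For associativity, the bookkeeping lemma shows that both $(a+b)+c$ and $a+(b+c)$ are the filter generated by $\{(A+B)+C\mid A\in a,\,B\in b,\,C\in c\}=\{A+(B+C)\mid A\in a,\,B\in b,\,C\in c\}$, the equality of these two families being associativity of addition in $\mathbb{Q}$; the multiplicative case is identical. Closure of $\mathbb{R}$ under both operations is \lemref{sumOfReals}. The identities are supplied by \propref{AddingMultRationals}: $\iota(0)+b=\{0+B\mid B\in b\}=b$ and $\iota(1)\cdot b=\{1\cdot B\mid B\in b\}=b$, so $0=\iota(0)$ and $1=\iota(1)$ do the job (and $0\ne1$ since $\iota$ is an order embedding, though this is not needed for the axioms). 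For additive inverses I would put $-a=\{-A\mid A\in a\}$, with $-A=\{-\alpha\mid\alpha\in A\}$; since $x\mapsto-x$ is a bijection of $\mathbb{Q}$ sending $q_\varepsilon$ to $(-q)_\varepsilon$, it carries the minimal Cauchy filter $a$ to another minimal Cauchy filter, so $-a\in\mathbb{R}$. That $a+(-a)=\iota(0)$ then follows from the equality criterion \thmref{CriterionForEquality}: an arbitrary member of $a+(-a)$ contains some $A+(-A')$ and an arbitrary member of $\iota(0)$ contains some $0_\varepsilon$, and since $a$ is proper any $\gamma\in A\cap A'$ gives $0=\gamma-\gamma\in(A+(-A'))\cap 0_\varepsilon$.

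The only axiom with any content is distributivity $a(b+c)=ab+ac$, and here one cannot argue element-wise because $A(B+C)$ is in general strictly smaller than $AB+AC$. My plan is to prove \emph{one} inclusion by hand and then harvest equality from minimality. By the bookkeeping lemma, $a(b+c)=\langle\{A(B+C)\mid A\in a,\,B\in b,\,C\in c\}\rangle$ and $ab+ac=\langle\{AB+A'C\mid A,A'\in a,\,B\in b,\,C\in c\}\rangle$. Given a generator $AB+A'C$ of $ab+ac$, set $A_{0}=A\cap A'\in a$; then $A_{0}(B+C)\subseteq A_{0}B+A_{0}C\subseteq AB+A'C$, so every generator of $ab+ac$ contains a generator of $a(b+c)$, whence $ab+ac\subseteq a(b+c)$. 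But $a(b+c)$ is a real number, hence a \emph{minimal} Cauchy filter, while $ab+ac$ is a Cauchy filter contained in it (both by \lemref{sumOfReals}); minimality therefore forces $ab+ac=a(b+c)$. This manoeuvre — establishing the easy containment explicitly and then invoking minimality instead of struggling for the hard one — is the crux of the proof, and is exactly where the decision to represent reals by minimal Cauchy filters earns its keep.
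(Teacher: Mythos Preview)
Your proof is correct and, for commutativity, associativity, the identities, and additive inverses, it proceeds essentially as the paper does (the paper also reduces these to the bookkeeping identities $\langle a\oplus(b+c)\rangle=\langle a\oplus(b\oplus c)\rangle$ etc.\ together with the corresponding set-level equalities like $(A+B)+C=A+(B+C)$, and handles inverses via the core/equality criterion in the same way).

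The one genuine divergence is distributivity. The paper dispatches it with the phrase ``repetition of this argument,'' implicitly treating it on a par with associativity; but as you correctly observe, the set-level identity $A(B+C)=AB+AC$ fails in general (only the inclusion $A(B+C)\subseteq AB+AC$ holds), so the associativity template does not literally repeat. Your approach---prove the easy containment $ab+ac\subseteq a(b+c)$ by shrinking to $A_0=A\cap A'$, then invoke minimality of the Cauchy filter $a(b+c)$ to force equality---is a clean way to close this gap, and it makes explicit precisely where the ``minimal'' in ``minimal Cauchy filter'' is doing work. An alternative in the paper's spirit would be to use the equality criterion directly (any $\alpha(\beta+\gamma)=\alpha\beta+\alpha\gamma$ with $\alpha,\beta,\gamma$ chosen from suitable intersections witnesses that every pair of members meets), but your minimality manoeuvre is arguably more elegant and certainly more honest about the asymmetry.
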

\begin{proof}
Let $a,b,c\in\mathbb{R}$ be given. We note first that 
\begin{itemize}
\item $\langle a\oplus(b+c)\rangle=\langle a\oplus(b\oplus c)\rangle$ 
\item $\langle(a+b)\oplus c\rangle=\langle(a\oplus b)\oplus c\rangle$
\item $\langle a\odot(bc)\rangle=\langle a\odot(b\odot c)\rangle$
\item $\langle(ab)\odot c\rangle=\langle(a\odot b)\odot c\rangle$
\item $\langle a\odot(b+c)\rangle=\langle a\odot(b\oplus c)\rangle$.
\end{itemize}
Indeed, all of these equalities are established by essentially the
same argument, so we only verify the first one. Since $b\oplus c\subseteq b+c$,
one of the inclusions is trivial. For the other inclusion, a typical
element $X$ in $\langle a\oplus(b+c)\rangle$ is a subset of $\mathbb{Q}$
with $X\supseteq A+Y$ for some $A\in a$ and $Y\in b+c$. But then
$Y$ itself contains a set of the form $B+C$ for $B\in b$ and $C\in c$,
and thus $X\supseteq A+(B+C)$, and is thus in $a\oplus(b\oplus c)$.
Associativity of $+$ now follows at once since 
\[
a+(b+c)=\langle a\oplus(b+c)\rangle=\langle a\oplus(b\oplus c)\rangle=\langle(a\oplus b)\oplus c\rangle=\langle(a+b)\oplus c\rangle=(a+b)+c
\]
as $(a\oplus b)\oplus c=a\oplus(b\oplus c)$ is immediate since obviously
$(A+B)+C=A+(B+C)$ holds for all subsets of $\mathbb{Q}$. Repetition
of this argument shows that multiplication is associative, that both
addition and multiplication are commutative, and the distributivity
law. \propref{AddingMultRationals} implies at once the neutrality
of $0=\iota(0)$ and that $1=\iota(1)$ is a multiplicative identity
element. Finally, for the existence of additive inverses, consider
a real number $a$ and let $b=\{-A\mid A\in a\}$, where $-A=\{-\alpha\mid\alpha\in A\}$.
Obviously, $b$ is a real number and we now show that $a+b=0$. By
\lemref{RatIFFCisq} it suffices to show that $0\in C$ for all $C\in a+b$.
Indeed, for such a $C$ there exists $A,A'\in a$ with $C\supseteq A+(-A')\supseteq(A\cap A')+(-(A\cap A'))$,
a set which certainly contains $0$ since $A\cap A'\ne\emptyset$. \end{proof}
\begin{thm}
$\mathbb{R}$ With addition and multiplication is a field. \end{thm}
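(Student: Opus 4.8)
The plan is to lean on the fact, just established, that $\mathbb{R}$ is a commutative ring with unity, so that only two things remain: that $0=\iota(0)$ and $1=\iota(1)$ are distinct, and that every nonzero real number is invertible. The first is immediate, since $\iota$ is an order embedding, hence injective, and $0\neq 1$ in $\mathbb{Q}$. For the second I would first treat the case $a>0$ and then reduce the case $a<0$ to it.

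So let $a>0$. By \propref{PosImpliesDelta} there are rationals $\eta>0$ and $\delta>0$ such that every interval $q_{\varepsilon}\in a$ with $0<\varepsilon\le\delta$ satisfies $q-\varepsilon\ge\eta>0$. Consider the collection $\mathcal{B}=\{\frac{1}{q_{\varepsilon}}\mid q_{\varepsilon}\in a,\ 0<\varepsilon\le\delta\}$, where $\frac{1}{S}=\{1/x\mid x\in S,\ x\neq 0\}$. By \propref{GeomOfIntervals}(4) each $\frac{1}{q_{\varepsilon}}$ is again an interval, of length $\frac{2\varepsilon}{q^{2}-\varepsilon^{2}}\le\frac{2\varepsilon}{\eta^{2}}$. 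I would then check that $\mathcal{B}$ is a Cauchy filter base: it is nonempty because $a$, being round and Cauchy, contains arbitrarily short intervals of the required form; it is directed because for $q_{\varepsilon},q'_{\varepsilon'}\in a$ the intersection $q_{\varepsilon}\cap q'_{\varepsilon'}$ is a nonempty interval lying in $a$, contained in the positive rationals and of length at most $2\delta$, so it equals some $q''_{\varepsilon''}\in\mathcal{B}$, and reciprocation, being injective on nonzero rationals, commutes with intersections, so $\frac{1}{q''_{\varepsilon''}}=\frac{1}{q_{\varepsilon}}\cap\frac{1}{q'_{\varepsilon'}}$; and the length bound $\frac{2\varepsilon}{\eta^{2}}$ makes it Cauchy. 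I then set $b:=\langle\mathcal{B}\rangle_{\circ}$, the roundification, which by the roundification theorem is a minimal Cauchy filter, that is, a real number.

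It remains to prove $ab=\iota(1)$. Since $ab$ is a real number by \lemref{sumOfReals}, its core is empty or a singleton, so by \lemref{RatIFFCisq} it suffices to show $1\in C$ for every $C\in ab$. Such a $C$ contains $A\cdot B$ for some $A\in a$ and $B\in b$, and unwinding the definitions of $\langle\mathcal{B}\rangle_{\circ}$ and of $\langle\mathcal{B}\rangle$ shows that $B$ contains some $G\in\langle\mathcal{B}\rangle$, which in turn contains some $\frac{1}{q_{\varepsilon}}\in\mathcal{B}$, an interval with $q_{\varepsilon}\in a$ and $q-\varepsilon>0$. As $A$ and $q_{\varepsilon}$ both belong to the proper filter $a$ they intersect; choosing $\alpha\in A\cap q_{\varepsilon}$ gives $\alpha\in A$ and $1/\alpha\in\frac{1}{q_{\varepsilon}}\subseteq B$, hence $1=\alpha\cdot(1/\alpha)\in A\cdot B\subseteq C$. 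Thus $C(ab)=\{1\}$ and $ab=\iota(1)=1$. For the case $a<0$, the real number $-a$ (the additive inverse produced in the ring theorem) is positive, so it has an inverse $c$ by the above, and the ring identity $(-a)c=-(ac)$ gives $a\cdot(-c)=-(ac)=(-a)c=1$; hence $-c$ inverts $a$. This completes the proof.

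I expect the main obstacle to be the verification that $\mathcal{B}$ is genuinely a Cauchy filter base — in particular that it is directed, which rests on reciprocation commuting with finite intersections and on the length estimate from \propref{GeomOfIntervals}(4) — together with the bookkeeping needed to locate, inside an arbitrary member $B$ of $b$, an interval $q_{\varepsilon}\in a$ with positive lower endpoint. Once \propref{GeomOfIntervals}(4) and \propref{PosImpliesDelta} are in hand, the remaining steps are routine.
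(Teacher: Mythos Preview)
Your proof is correct and follows essentially the same approach as the paper: form a Cauchy filter base of reciprocals, roundify to obtain a real number, and verify the product equals $\iota(1)$ by showing $1$ lies in the core. The only cosmetic differences are that the paper takes $\mathcal{B}=\{\tfrac{1}{A}\mid A\in a\}$ over all $A\in a$ rather than just short intervals, and handles $a<0$ by saying ``similar'' rather than via the ring identity $(-a)c=-(ac)$ as you do.
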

\begin{proof}
Fix a real number $a>0$, for which we shall present an inverse $a^{-1}$.
For an arbitrary $A\subseteq\mathbb{Q}$ let $\frac{1}{A}=\{\frac{1}{\alpha}\mid\alpha\in A,\alpha\ne0\}$.
Noting that $\frac{1}{A}\cap\frac{1}{A'}=\frac{1}{A\cap A'}$, it
follows at once that $\mathcal{B}=\{\frac{1}{A}\mid A\in a\}$ is
a filter base. We proceed to show that it is Cauchy, so let us fix
an $\varepsilon>0$. By \propref{PosImpliesDelta} there exists $\eta>0$
and $\delta>0$ such that $p_{\delta'}\in a$ implies $p_{\delta'}>_{\forall}\{\eta\}$,
for all $p\in\mathbb{Q}$ and $0<\delta'\le\delta$. It then follows
that $\frac{1}{p_{\delta'}}$ is an interval whose length is $\frac{2\delta'}{(p-\delta')(p+\delta')}\le\frac{2\delta'}{\eta^{2}}<\varepsilon$,
for a sufficiently small $\delta'>0$. The existence of some $p_{\delta'}\in a$
is guaranteed since $a$ is Cauchy. 

We may now define $a^{-1}=\langle\frac{1}{A}\mid A\in a\rangle$,
the generated filter (with slight abuse of notation), which is thus
Cauchy. It is a bit tedious to show directly that $\mathcal{B}$ is
also a round filter base. To avoid these details, and since we are
only interested in the existence of a multiplicative inverse, let
us consider $a^{-1}=\langle\mathcal{B}\rangle_{\circ}$, the roundification
of the generated filter, which is thus both Cauchy and round, and
hence a real number. To show that $a\cdot a^{-1}=1=\iota(1)$ it suffices
to compute the core and appeal to \lemref{RatIFFCisq}. Indeed, since
$\langle\mathcal{B}\rangle_{\circ}\subseteq\langle\mathcal{B}\rangle$,
given $C\in a\cdot a^{-1}$ there exist $A,A'\in a$ such that $C\supseteq A\cdot\frac{1}{A'}$
which contains $(A\cap A')\cdot\frac{1}{A\cap A'}$, which itself
clearly contains $1$ since $A\cap A'\ne\emptyset$, and in fact contains
at least two elements, one of which is not $0$. 

The proof for $a<0$ is similar. \end{proof}
\begin{thm}
$\mathbb{R}$ with addition and multiplication is an ordered field.\end{thm}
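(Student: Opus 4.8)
The plan is to verify the two compatibility axioms of an ordered field, namely that $a<b$ implies $a+c<b+c$ and that $0<a$ together with $0<b$ implies $0<ab$, for all $a,b,c\in\mathbb{R}$; that $\mathbb{R}$ is a field and that $<$ is a total order are already in hand. The organising device is the \emph{translation criterion}
\[
a<b\iff 0<b+(-a),
\]
where $-a=\{-A\mid A\in a\}$ is the additive inverse exhibited in the proof that $\mathbb{R}$ is a ring. Granting this criterion, additive compatibility drops out: the ring identity $(b+c)+\bigl(-(a+c)\bigr)=b+(-a)$ gives $a<b\iff 0<b+(-a)\iff 0<(b+c)+\bigl(-(a+c)\bigr)\iff a+c<b+c$. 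Combined with the observation $b+(-a)=0\iff a=b$ (pure ring algebra) and the totality of $<$, the criterion also yields $a\le b\iff 0\le b+(-a)$ and hence $a\le b\implies a+c\le b+c$, so both the strict and the reflexive formulations of the order axiom follow.

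To prove the translation criterion I would argue element-wise on the filter bases, together with the characterization of positivity established above (``$a>0$ if and only if every $A\in a$ contains some $A_+\in a$ with $A_+>_{\forall}\{0\}$''). If $a<b$, i.e.\ $a<_{\exists\forall}b$, choose $A\in a$ and $B\in b$ with $A<_{\forall}B$; then $B+(-A)$ lies in the filter base $b\oplus(-a)$, hence in $b+(-a)$, and each of its elements is of the form $\beta-\alpha$ with $\alpha\in A$, $\beta\in B$, so $B+(-A)\subseteq(0,\infty)$, i.e.\ $B+(-A)>_{\forall}\{0\}$; intersecting $B+(-A)$ with an arbitrary member of $b+(-a)$ still gives a member contained in $(0,\infty)$, so the positivity characterization yields $0<b+(-a)$. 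Conversely, if $0<b+(-a)$ that same characterization supplies $D\in b+(-a)$ with $D>_{\forall}\{0\}$; by definition of the generated filter $D\supseteq B+(-A)$ for some $B\in b$ and $A\in a$, whence $B+(-A)\subseteq(0,\infty)$, which is precisely $A<_{\forall}B$, so $a<b$.

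Multiplicative compatibility is the genuinely easy step, and---in contrast with the Dedekind-cut situation lamented by Conway---it needs no case analysis on signs. If $0<a$ and $0<b$, the positivity characterization gives $A\in a$ and $B\in b$ with $A,B\subseteq(0,\infty)$; then $AB=\{\alpha\beta\mid\alpha\in A,\beta\in B\}\subseteq(0,\infty)$, and since $AB$ belongs to the filter base $a\odot b$ it belongs to $ab=\langle a\odot b\rangle$. For any $C\in ab$ the set $C\cap AB$ is again a member of $ab$ lying in $(0,\infty)$, so by the positivity characterization $ab>0$. For the reflexive form $0\le a$, $0\le b\implies 0\le ab$: if one factor equals $0$ then $ab=0$ by \propref{prodWithZero}, and otherwise we are back in the strict case just treated.

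I expect the only real pitfall to be the tempting but doomed attempt to prove $a+c<b+c$ directly by comparing $A+C$ with $B+C$: the inequality $\alpha+\gamma<\beta+\gamma'$ can fail when $\gamma,\gamma'$ range independently over $C$, even though $\alpha<\beta$ for all $\alpha\in A$ and $\beta\in B$. Passing to the difference $b+(-a)$ and leaning on the positivity characterization circumvents this, and as a bonus it also removes any need for sign-based casework in the multiplicative axiom.
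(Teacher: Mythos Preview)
Your proof is correct but follows a genuinely different route from the paper's. The paper works directly with the relation $\le_{\forall\exists}$ rather than $<_{\exists\forall}$: given arbitrary $D\in a+c$ and $D'\in b+c$, it writes $D\supseteq A+C$ and $D'\supseteq B+C'$, uses $A\le_{\exists}B$ to pick $\alpha\le\beta$, and then chooses a \emph{common} $\gamma\in C\cap C'$ (nonempty since $c$ is proper) to get $\alpha+\gamma\le\beta+\gamma$, hence $D\le_{\exists}D'$. The multiplicative case is the same argument with $\gamma$ taken from $C\cap C'\cap C_{+}$ for some $C_{+}>_{\forall}\{0\}$. So the direct comparison you call ``doomed'' actually succeeds once one argues with $\le$ instead of $<$ and exploits the common-element trick; the paper never introduces the translation criterion $a<b\iff 0<b+(-a)$ at all. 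Your detour through positivity is the standard textbook reduction and is perfectly valid, and it has the virtue of making the multiplicative axiom a one-liner on filter-base elements; the paper's approach, on the other hand, is shorter overall and treats the additive and multiplicative cases by literally the same proof template. Note also that the paper verifies $a\le b,\ c>0\Rightarrow ac\le bc$, whereas you verify the equivalent axiom that products of positives are positive.
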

\begin{proof}
Let $a,b,c\in\mathbb{R}$ with $a\le b$. We have to show that $a+c\le b+c$
and, if $c>0$, that $ac\le bc$. Indeed, given arbitrary $D\in a+c$
and $D'\in b+c$ there exist $A\in a$, $B\in b$, and $C,C'\in c$
with $D\supseteq A+C$ and $D'\supseteq B+C'$. Further, since $a\le_{\forall\exists}b$,
it holds that $A\le_{\exists}B$, so that $\alpha\le\beta$ for some
$\alpha\in A$ and $\beta\in B$. Since $C\cap C'\ne\emptyset$ let
$\gamma\in C\cap C'$. Then $\alpha+\gamma\le\beta+\gamma$, showing
that $A+C\le_{\exists}B+C'$, and thus that $D\le_{\exists}D'$. As
$D$ and $D'$ were arbitrary we showed that $a+c\le_{\forall\exists}b+c$,
as required. 

The argument for showing that $ac\le bc$ under the further condition
$c>0$ is similar. Firstly, since $c>0$ there exists $C_{+}\in c$
with $C_{+}>_{\forall}\{0\}$. Now, given arbitrary $D\in ac$ and
$D'\in bc$ there exist $A\in a$, $B\in b$, and $C,C'\in c$ with
$D\supseteq AC$ and $D'\supseteq BC'$. As above, we have $\alpha\le\beta$
for some $\alpha\in A$ and $\beta\in B$. As $C\cap C'\cap C_{+}$
is non-empty, let $\gamma\in C\cap C'\cap C_{+}$. Then $\alpha\gamma\le\beta\gamma$,
showing that $AC\le_{\exists}BC'$, and thus that $D\le_{\exists}D'$,
which were arbitrary and thus $ac\le_{\forall\exists}bc$. The proof
is complete. \end{proof}
\begin{cor}
The canonical embedding $\iota:\mathbb{Q}\to\mathbb{R}$ is a field
homomorphism. \end{cor}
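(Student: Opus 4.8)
The plan is to verify the three defining features of a field homomorphism: that $\iota$ preserves addition, that it preserves multiplication, and that $\iota(1)$ is the multiplicative identity of $\mathbb{R}$; preservation of $0$ is already built into the convention $0=\iota(0)$, and injectivity is free since we have seen that $\iota$ is an order embedding (and in any case it is automatic for a unital homomorphism between fields). The engine of the proof is twofold: the geometry of intervals recorded in \propref{GeomOfIntervals} --- in particular that $p_\varepsilon+q_\delta=(p+q)_{\varepsilon+\delta}$ and that a product of intervals is again an interval --- together with the Equality Criterion \thmref{CriterionForEquality}, which reduces an equality of two real numbers to showing that every member of the one meets every member of the other.

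First I would settle additivity. Both $\iota(p)+\iota(q)$ and $\iota(p+q)$ are real numbers, the former by \lemref{sumOfReals}, so by \thmref{CriterionForEquality} it suffices to show that an arbitrary $X\in\iota(p)+\iota(q)$ meets an arbitrary $Y\in\iota(p+q)$. Such an $X$ contains a set $A+B$ with $A\in\iota(p)$ and $B\in\iota(q)$; by the definition of $\iota$ we may pick intervals $p_\varepsilon\subseteq A$ and $q_\delta\subseteq B$, whence \propref{GeomOfIntervals}(1) gives $X\supseteq p_\varepsilon+q_\delta=(p+q)_{\varepsilon+\delta}$, while $Y\supseteq(p+q)_\eta$ for some $\eta>0$. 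Two intervals centred at the same rational always intersect, so $X\cap Y\neq\emptyset$. Multiplicativity follows by the same scheme: an arbitrary $X\in\iota(p)\iota(q)$ contains a set $AB\supseteq p_\varepsilon q_\delta$, which by \propref{GeomOfIntervals}(2) is an interval, and it contains $pq$ because $p\in p_\varepsilon$ and $q\in q_\delta$; an arbitrary $Y\in\iota(pq)$ contains $(pq)_\eta$; and an interval containing $pq$ meets any interval centred at $pq$, so $X\cap Y\neq\emptyset$. One could instead read both identities off \propref{AddingMultRationals} directly and invoke \propref{prodWithZero} to handle $p=0$ in the multiplicative statement, but the route through \propref{GeomOfIntervals} has the mild advantage of covering $p=0$ uniformly, since $p_\varepsilon q_\delta$ is then just an interval around $pq=0$.

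Finally, $\iota(1)$ is the multiplicative identity of $\mathbb{R}$ --- this was already noted when $\mathbb{R}$ was shown to be a commutative ring with unity, with $1=\iota(1)$ --- so $\iota$ is a unital ring homomorphism, and being injective with $\iota(1)\neq\iota(0)$ it is a field homomorphism. I do not anticipate a real obstacle here: all the substance was already distilled into \propref{GeomOfIntervals}, \thmref{CriterionForEquality}, and \lemref{sumOfReals}, and the only point demanding a moment's care --- the degenerate interval $p_\varepsilon q_\delta$ when $p=0$ --- is sidestepped by the argument above.
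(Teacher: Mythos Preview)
Your argument is correct, but the paper takes a shorter path. Rather than invoking the Equality Criterion and the interval arithmetic of \propref{GeomOfIntervals}, the paper simply observes that $p\in A$ for every $A\in\iota(p)$ and $q\in B$ for every $B\in\iota(q)$ (this is exactly the statement $C(\iota(p))=\{p\}$ from \lemref{RatIFFCisq}), whence $p+q\in A+B$ and $pq\in AB$; thus $p+q$ lies in the core of $\iota(p)+\iota(q)$ and $pq$ in the core of $\iota(p)\iota(q)$, and a second appeal to \lemref{RatIFFCisq} finishes. In effect you are proving the same thing --- your intersection witness in $X\cap Y$ is always the point $p+q$ (respectively $pq$) --- but you route it through \thmref{CriterionForEquality} and the shape of $p_\varepsilon+q_\delta$ and $p_\varepsilon q_\delta$, whereas the paper names the common point once and is done. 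Your approach has the minor virtue you note, that the multiplicative case needs no separate treatment of $p=0$; the paper's approach has the virtue of being a two-line proof that makes no use of \propref{GeomOfIntervals} at all.
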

\begin{proof}
We have to show that $\iota(p+q)=\iota(p)+\iota(q)$ and that $\iota(pq)=\iota(p)\iota(q)$,
for all $p,q\in\mathbb{Q}$. Indeed, since $C(\iota(p))=\{p\}$ and
$C(\iota(q))=\{q\}$ it follows at once that $p+q\in C(\iota(p)+\iota(q))$
and that $pq\in C(\iota(p)\iota(q))$. The claim now follows by \lemref{RatIFFCisq}. 
\end{proof}

\subsection{Completeness}

We now establish the completeness property of the reals. Let us fix
a non-empty set $\mathcal{A}$ of real numbers and assume that it
is bounded above by some real number $c$. Consider the collection
$\mathbb{A}=\{p_{\varepsilon}\mid p_{\varepsilon}\in a_{0},a_{0}\in\mathcal{A}\}$,
which represents an attempt to collate all of $\mathcal{A}$ into
a single real number. However, this collection fails to be a filter.
We thus refine it by considering the collection $\mathcal{B}=\{p_{\varepsilon}\in\mathbb{A}\mid a<\iota(p+\varepsilon),\forall a\in\mathcal{A}\}$,
which is non-empty since $\mathcal{A}$ is non-empty and bounded above.
Intuitively, the condition sifts away those elements in $\mathbb{A}$
which lie too far below in $\mathcal{A}$. The proof of completeness
proceeds in two steps:
\begin{enumerate}
\item Establish that $\mathcal{B}$ is a Cauchy filter. 
\item Prove that $b=\langle\mathcal{B}\rangle_{\circ}$ is the least upper
bound of $\mathcal{A}$.
\end{enumerate}
Let us first tend to the second task as it is quite straightforward.
Working under the supposition that $\mathcal{B}$ is a Cauchy filter
base, it follows that $\langle\mathcal{B}\rangle$, the generated
filter, is a Cauchy filter. The filter $b=\langle\mathcal{B}\rangle_{\circ}$
is thus Cauchy and round, namely a real number. Recalling that the
roundification of a filter $\mathcal{F}$ always yields a subfilter
of $\mathcal{F}$ we have that $b\subseteq\langle\mathcal{B}\rangle$.
Consequently, for every $B\in b$ there exists $q\in\mathbb{Q}$ and
$\varepsilon>0$ with $B\supseteq q_{\varepsilon}\in\mathcal{B}$.
We call any such $q_{\varepsilon}$ a \emph{witnessing interval }for
$B$. 

Given an arbitrary $a_{0}\in\mathcal{A}$ we need to show that $a_{0}\le_{\forall\exists}b$,
so let us fix $A\in a_{0}$ and $B\in b$, and we need to establish
that $A\le_{\exists}B$. Let $q_{\varepsilon}$ be a witnessing interval
for $B$. Since $q_{\varepsilon}\in\mathcal{B}$ it follows that $a_{0}<\iota(q+\varepsilon)$.
It now follows that there exists $A'\in a_{0}$ with $A'<_{\forall}\{q+\varepsilon\}$,
and let $A_{0}=A\cap A'$, which is in $a_{0}$ and thus non-empty.
It suffices to show now that $A_{0}\le_{\exists}q_{\varepsilon}$.
But for any $\alpha\in A_{0}$ it follows that $\alpha<q+\varepsilon$,
and thus $\alpha\le y$ for some $y\in q_{\varepsilon}$, as required.
To complete the argument that $b$ is the least upper bound, suppose
that $c$ is any upper bound of $\mathcal{A}$ and assume that $c<b$.
There exist then $C\in c$ and $B\in b$ with $C<_{\forall}B$. Taking
a witnessing interval $q_{\varepsilon}$ for $B$ we have that $C<_{\forall}q_{\varepsilon}$,
and $q_{\varepsilon}\in a_{0}$ for some $a_{0}\in\mathcal{A}$. However,
$c$ is an upper bound of $\mathcal{A}$ and thus $c\ge a_{0}$, namely
$c\ge_{\forall\exists}a_{0}$. It follows that $C\ge_{\exists}q_{\varepsilon}$,
clearly contradicting $C<_{\forall}q_{\varepsilon}$. 

It now remains to show that $\mathcal{B}$ is a Cauchy filter base.
Firstly, $\mathcal{B}$ is a filter base as follows. Suppose $p_{\varepsilon},q_{\delta}\in\mathcal{B}$,
with $p_{\varepsilon}\in a'$ and $q_{\delta}\in a''$, and, without
loss of generality, $a'\le a''$. Now, the intersection $p_{\varepsilon}\cap q_{\delta}$,
if not empty, is an interval $s_{\eta}$ whose upper bound $s+\eta$
is either $p+\varepsilon$ or $q+\delta$, and thus the condition
$a<\iota(s+\eta)$ holds for all $a\in\mathcal{A}$ automatically
since it holds for both $p+\varepsilon$ and $q+\delta$. Hence, to
conclude that $s_{\eta}\in\mathcal{B}$ it suffices to show that $p_{\varepsilon}\in a''$,
since then $p_{\varepsilon}\cap q_{\delta}\in a''$ too, and is thus
non-empty. With the aid of \propref{ComputingApprox} we have $\iota(p-\varepsilon)<a'$,
and we wish to show that $\iota(p-\varepsilon)<a''<\iota(p+\varepsilon)$.
But $a''<\iota(p+\varepsilon)$ is immediate from $p_{\varepsilon}\in\mathcal{B}$,
while $\iota(p-\varepsilon)<a'<a''$ follows from the preceding inequalities. 

To show that $\mathcal{B}$ is Cauchy let $\varepsilon>0$ be given
and let $\delta=\frac{\varepsilon}{2}$. For each $a\in\mathcal{A}$
we may find $p(a)\in\mathbb{Q}$ such that $p(a)_{\delta}\in a$,
and in particular $\iota(p(a)+\delta)>a$. It suffices to exhibit
a single $a_{0}\in\mathcal{A}$ for which $\iota(p(a_{0})+\varepsilon)>a$,
for all $a\in\mathcal{A}$, since then (as $p(a_{0})_{\varepsilon}\supseteq p(a_{0})_{\delta}$)
$p(a_{0})_{\varepsilon}\in a_{0}$ and thus $p(a_{0})_{\varepsilon}\in\mathcal{B}$,
as required for the Cauchy condition. Suppose to the contrary that
for all $a\in\mathcal{A}$ there exists $a'\in\mathcal{A}$ with $a'\ge\iota(p(a)+\varepsilon)$.
In particular, $a'\ge\iota(p(a)+2\delta)>a+\iota(\delta)$. Starting
with an arbitrary $a_{0}\in\mathcal{A}$, an inductive argument then
shows that for arbitrary $n\in\mathbb{N}$ an element $a\in\mathcal{A}$
exists with $a>a_{0}+\iota(n\delta)$. But as $b$ is an upper bound
of $\mathcal{A}$ it follows that $b\ge a_{0}+\iota(n\delta)$, for
all $n\ge1$, contradicting the fact that $\mathbb{R}$ is archimedean.

\section{Consequences\label{sec:Consequences}}

In this final section we inspect two aspects of the real numbers through
the lens of the above formalism; the uncountability of the reals and
the limit definition. The criterion for equality of real numbers plays
a crucial role. 

Proofs of the uncountability of the reals are in (relative) abundance.
Typically, such a proof relies either on a particular representation
of real numbers (e.g., Cantor's famous diagonalization proof on the
digits of expansions of the real numbers) or on some (more or less
immediate) properties of the real number system (e.g., the reals are
uncountable since $\mathbb{R}$ is a non-empty complete metric space,
applying Baire's category theorem). 

The construction of the reals above lends itself straightforwardly
to yet another proof which uses only basic properties of filters and
the criterion for equality (\thmref{CriterionForEquality}). We first
observe a trivial auxiliary result. For rational intervals $I$ and
$J$ let $|I|$ denote the length of the interval, and we write $J\subseteq_{d}I$
if $J$ is \emph{deeply contained} in $I$, meaning, assuming $I=(p,q)$,
that there exists $\varepsilon>0$ such that $J\subseteq(p+\varepsilon,q-\varepsilon)$.
Then given a real number $a$ and a rational interval $I$, there
exists a rational interval $J\subseteq_{d}I$ with $|J|=|I|/5$, and
$A\in a$ with $J\cap A=\emptyset$. Indeed, since $a$ is Cauchy,
there exists some interval $A\in a$ with $|A|\le|I|/5$. Subdividing
$I$ into five equal intervals, any of the middle three is deeply
contained in $I$, and $A$ can not have non-empty intersection with
all three. 
\begin{thm}
\label{thm:uncount}The set $\mathbb{R}$ is uncountable. \end{thm}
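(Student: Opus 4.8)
The plan is to run a nested-interval (``Cantor-style'') argument, but carried out entirely with rational intervals and packaged as a minimal Cauchy filter via the auxiliary observation recorded just before the theorem statement. Concretely, I will show that no sequence $(a_{n})_{n\ge1}$ of real numbers can exhaust $\mathbb{R}$: I produce a real number $a$ with $a\ne a_{n}$ for every $n$. Since $\mathbb{R}$ is nonempty (indeed infinite, as $\iota$ is injective), this is exactly what is needed to conclude that $\mathbb{R}$ is uncountable.

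First I would build a descending chain of rational intervals. Set $I_{0}=(0,1)$, and, given $I_{n-1}$, apply the auxiliary result to the real number $a_{n}$ and the interval $I_{n-1}$ to obtain a rational interval $I_{n}\subseteq_{d}I_{n-1}$ with $|I_{n}|=|I_{n-1}|/5$, together with a set $A_{n}\in a_{n}$ satisfying $I_{n}\cap A_{n}=\emptyset$. Thus $|I_{n}|=5^{-n}$, the $I_{n}$ are nested, and each is deeply contained in its predecessor. Now I would assemble the candidate real number: the collection $\mathcal{B}=\{I_{n}\mid n\ge0\}$ is a filter base, since the intervals being nested gives $I_{n}\cap I_{m}=I_{\max\{n,m\}}\in\mathcal{B}$, and it is a Cauchy filter base, since for any $\varepsilon>0$ one picks $n$ with $5^{-n}<\varepsilon$ and then $I_{n}\in\mathcal{B}$ has length below $\varepsilon$. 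Hence $\langle\mathcal{B}\rangle$ is a Cauchy filter and $a:=\langle\mathcal{B}\rangle_{\circ}$ is a minimal Cauchy filter, i.e.\ a real number.

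It then remains to separate $a$ from each $a_{n}$, which is where I expect the only genuine subtlety. Fix $n\ge1$. Because $I_{n+1}\subseteq_{d}I_{n}$, I can choose $\varepsilon>0$ with $(I_{n+1})_{\varepsilon}\subseteq I_{n}$, where $(\,\cdot\,)_{\varepsilon}$ denotes the $\varepsilon$-neighbourhood from the definition of roundification. Since $I_{n+1}\in\langle\mathcal{B}\rangle$ and $\varepsilon>0$, the set $(I_{n+1})_{\varepsilon}$ is a member of the filter base defining $\langle\mathcal{B}\rangle_{\circ}$, so $(I_{n+1})_{\varepsilon}\in a$. But $(I_{n+1})_{\varepsilon}\cap A_{n}\subseteq I_{n}\cap A_{n}=\emptyset$ with $A_{n}\in a_{n}$, so \thmref{CriterionForEquality} forbids $a=a_{n}$. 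As $n$ was arbitrary, $a$ is not a term of the sequence.

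The point to watch — and precisely the reason the auxiliary result is phrased with \emph{deep} containment — is that $a$ is a roundification, an operation which only removes sets, so no individual $I_{n}$ need belong to $a$; deep containment is exactly what lets me replace $I_{n}$ by an honest member $(I_{n+1})_{\varepsilon}$ of $a$ that is still contained in $I_{n}$ and hence still misses $A_{n}$, after which the equality criterion does the rest. Everything else — that $\mathcal{B}$ is a Cauchy filter base, and the inclusion $(I_{n+1})_{\varepsilon}\subseteq I_{n}$ — is routine.
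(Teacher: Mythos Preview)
Your proof is correct and follows essentially the same nested-interval argument as the paper. The only difference is cosmetic: the paper observes that deep containment makes $\{I_{n}\}$ a \emph{round} Cauchy filter base outright, so that $a=\langle\{I_{n}\}\rangle$ is already a real number with each $I_{n}\in a$, whereas you roundify and then use deep containment to recover a member $(I_{n+1})_{\varepsilon}\in a$ lying inside $I_{n}$ --- both uses of deep containment accomplish the same thing.
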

\begin{proof}
Given a sequence $(a_{n})_{n\ge1}$ of real numbers it suffices to
construct a real number not in the sequence. Choose an arbitrary rational
interval $I_{0}$. Suppose that a sequence $I_{n}\subseteq_{d}I_{n-1}\subseteq_{d}\cdots\subseteq_{d}I_{0}$
was constructed together with $A_{1},\cdots,A_{n}$, where $A_{k}\in a_{k}$,
$1\le k\le n$, and with the properties that $|I_{k+1}|=|I_{k}|/5$
for all $0\le k<n$, and such that $I_{n}\cap A_{n}=\emptyset$ for
all $1\le k\le n$. Considering the real number $a_{n+1}$ and the
interval $I_{n}$ we may find a rational interval $I_{n+1}\subseteq_{d}I_{n}$
with $|I_{n+1}|=|I_{n}|/5$ and $A_{n+1}\in a_{n+1}$ so that $I_{n+1}\cap A_{n}=\emptyset$.
Continuing in this fashion, the sequence $\{I_{n}\}_{n\ge0}$ is a
Cauchy sequence of rational intervals which forms a round Cauchy filter
base, thus generating a real number $a$. Since $I_{n}\in a$ and
$I_{n}\cap A_{n}=\emptyset$, we conclude that $a\ne a_{n}$, for
all $n\ge1$. 
\end{proof}
A pleasant consequence of the filters formalism is the following reformulation
of the definition of convergence of a sequence of real numbers. The
lack of an explicit appearance of $\varepsilon$ should be noted. 
\begin{lem}
A sequence $(a_{n})_{n\ge1}$ of real numbers converges to a real
number $b$ if, and only if, for all $B\in b$ there exists $n_{0}\in\mathbb{N}$
such that $B\in a_{n}$, for all $n\ge n_{0}$.\end{lem}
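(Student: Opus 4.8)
The plan is to unwind the ordered-field definition of convergence and translate it, in both directions, through \propref{InternalApproximations}, which says exactly that $q_\varepsilon\in a$ holds iff $\iota(q-\varepsilon)<a<\iota(q+\varepsilon)$. Throughout I would use that $\iota\colon\mathbb{Q}\to\mathbb{R}$ is an order-preserving field homomorphism and that, by density of the rational reals, $a_n\to b$ is equivalent to the statement that for every rational $\varepsilon>0$ there is an $n_0$ with $|a_n-b|<\iota(\varepsilon)$ for all $n\ge n_0$.

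For the ``if'' direction I would fix a rational $\varepsilon>0$, use that $b$ is a Cauchy filter to obtain $q\in\mathbb{Q}$ with $q_{\varepsilon/2}\in b$, and then invoke the hypothesis with the specific member $B=q_{\varepsilon/2}$ to get $n_0$ with $q_{\varepsilon/2}\in a_n$ for all $n\ge n_0$. For such $n$, \propref{InternalApproximations} gives simultaneously $\iota(q-\varepsilon/2)<b<\iota(q+\varepsilon/2)$ and $\iota(q-\varepsilon/2)<a_n<\iota(q+\varepsilon/2)$; subtracting the two pairs of bounds yields $-\iota(\varepsilon)<a_n-b<\iota(\varepsilon)$, that is $|a_n-b|<\iota(\varepsilon)$, which is precisely the convergence condition.

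For the ``only if'' direction I would fix $B\in b$ and first shrink it to an interval. Since $b$ is round there is a rational $\varepsilon_0>0$ such that every interval $I\in b$ of length at most $\varepsilon_0$ is contained in $B$; since $b$ is Cauchy I can choose $q\in\mathbb{Q}$ and a rational $\delta$ with $0<\delta\le\varepsilon_0/4$ and $q_\delta\in b$, and then $q_{2\delta}\in b$ (it contains $q_\delta$ and $b$ is a filter) has length $4\delta\le\varepsilon_0$, hence $q_{2\delta}\subseteq B$. From $q_\delta\in b$ and \propref{InternalApproximations} I get $\iota(q-\delta)<b<\iota(q+\delta)$, and from $a_n\to b$ an $n_0$ with $|a_n-b|<\iota(\delta)$ for all $n\ge n_0$; for such $n$ this forces $a_n>b-\iota(\delta)>\iota(q-2\delta)$ and $a_n<b+\iota(\delta)<\iota(q+2\delta)$, so $q_{2\delta}\in a_n$ again by \propref{InternalApproximations}, and since $q_{2\delta}\subseteq B$ and $a_n$ is a filter we conclude $B\in a_n$, as required.

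Both halves are essentially routine once \propref{InternalApproximations} is in hand; the only step that takes a moment's thought is the ``only if'' direction, where one has to recognise that roundness of the minimal Cauchy filter $b$ is exactly the tool that replaces an arbitrary member $B\in b$ by an honest small interval $q_{2\delta}$ that $a_n$ can ``see'', and one must keep straight the factor of two relating ``$a_n$ is within $\iota(\delta)$ of $b$'' to ``the interval of radius $2\delta$ about $q$ lies in $a_n$''.
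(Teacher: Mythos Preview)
Your proof is correct and follows essentially the same route as the paper's own argument: both directions go through \propref{InternalApproximations}, the ``if'' direction by applying the hypothesis to a small interval $q_{\varepsilon/2}\in b$, and the ``only if'' direction by using roundness and the Cauchy condition to pass from an arbitrary $B\in b$ to a nested pair of intervals (your $q_\delta\subseteq q_{2\delta}\subseteq B$, the paper's $q_{\varepsilon/2}\subseteq q_\varepsilon\subseteq B$) and then pulling the outer one into $a_n$ via the convergence estimate. The only difference is bookkeeping of the halving/doubling constants; your treatment is in fact slightly more explicit about why the larger interval is still contained in $B$.
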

\begin{proof}
Assume the condition holds. Given $\varepsilon>0$ consider $q\in\mathbb{Q}$
with $q_{\varepsilon/2}\in b$, which is equivalent to $\iota(q-\varepsilon/2)<b<\iota(q+\varepsilon/2)$.
Let $n_{0}\in\mathbb{N}$ be given by the assumed condition. Then,
for each $n\ge n_{0}$, $q_{\varepsilon/2}\in a_{n}$ and so $\iota(q-\varepsilon/2)<a_{n}<\iota(q+\varepsilon/2)$.
Together with the estimate for $b$ we obtain $b-\iota(\varepsilon)<a_{n}<b+\iota(\varepsilon)$,
the familiar convergence formulation. 

Conversely, suppose that $a_{n}\to b$ in the usual sense, and let
$B\in b$. By roundness and the Cauchy condition we may then find
$q\in\mathbb{Q}$ and $\varepsilon>0$ with $q_{\varepsilon/2}\in b$
and $q_{\varepsilon}\subseteq B$. In particular, $\iota(q-\varepsilon/2)<b<\iota(q+\varepsilon/2)$.
Let $n_{0}\in\mathbb{N}$ be such that $b-\iota(\varepsilon/2)<a_{n}<b+\iota(\varepsilon/2)$,
for all $n\ge n_{0}$. Then $q-\iota(\varepsilon)<a_{n}<q+\iota(\varepsilon)$,
and thus $q_{\varepsilon}\in a_{n}$, for all $n\ge n_{0}$. Since
$q_{\varepsilon}\subseteq B$ it follows that $B\in a_{n}$, for all
$n\ge n_{0}$, as required. 
\end{proof}
Note that one may now establish the uniqueness of limits as follows.
If $b,c$ are both limits of the sequence $(a_{n})_{n\ge1}$, then,
for all $B\in b$ and $C\in c$, there exists $n_{0}\in\mathbb{N}$
with, in particular, $B,C\in a_{n}$. Since $a_{n}$ is a proper filter,
it follows that $B\cap C\ne\emptyset$, and thus $b=c$ by the equality
criterion for real numbers. 

Much of the fundamentals of analysis can effectively be developed
along similar lines. For instance we mention the following. Consider
a sequence $(a_{n})_{n\ge1}$ of real numbers. Let $\hat{a}$ be the
collection of all subsets $\hat{A}\subseteq\mathbb{Q}$ for which
there exists $n_{0}\in\mathbb{N}$ with $\hat{A}\in a_{n}$, for all
$n\ge n_{0}$. It is a trivial matter to verify that $\hat{a}$ is
a proper filter. We leave the details of the following claim for the
amusement of the reader. Let us denote by $\mathcal{F}_{\infty}$
the filter generated by $\{(q,\infty)\subseteq\mathbb{Q}\mid q\in\mathbb{Q}\}$,
with $\mathcal{F}_{-\infty}$ defined similarly. 
\begin{thm}
Let $(a_{n})_{n\ge1}$ and $\hat{a}$ be as above. Then
\begin{enumerate}
\item $(a_{n})_{n\ge1}$ converges $\iff$ $\hat{a}$ is a Cauchy filter
$\iff(a_{n})$ is a Cauchy sequence.
\item if $(a_{n})_{n\ge1}$ converges, then it converges to $(\hat{a})_{\circ}$.
\item $c\in\mathbb{R}$ is a partial limit of $(a_{n})_{n\ge1}$ $\iff$
$c\supseteq\hat{a}$. 
\item $\infty$ is a partial limit of $(a_{n})_{n\ge1}$ $\iff$ $\mathcal{F}_{\infty}\supseteq\hat{a}$
(and similarly for $-\infty$). 
\end{enumerate}
\end{thm}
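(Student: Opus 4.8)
The whole theorem can be reduced to the convergence criterion just proved --- that $(a_{n})_{n\ge1}$ converges to $b$ exactly when $b\subseteq\hat{a}$ --- together with three facts already on the table: that $\hat{a}$ is a proper filter, that a filter refining a Cauchy filter is again Cauchy, and that a proper Cauchy filter contains a unique minimal Cauchy filter, namely its roundification. I would also lean on \propref{ComputingApprox} throughout to pass back and forth between ``$q_{\varepsilon}\in a$'' and ``$\iota(q-\varepsilon)<a<\iota(q+\varepsilon)$''.

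The plan for (1) is to run a cycle of implications, after which (2) drops out. If $(a_{n})$ converges to $b$, then $b\subseteq\hat{a}$ and $b$ is a real number, hence Cauchy, so $\hat{a}$, which refines $b$, is Cauchy. If conversely $\hat{a}$ is Cauchy, then being proper as well it has a roundification $(\hat{a})_{\circ}$ that is a real number contained in $\hat{a}$, so the convergence criterion makes $(a_{n})$ converge (to $(\hat{a})_{\circ}$). That a convergent sequence is a Cauchy sequence is the usual estimate through the limit, and for the last implication I would show directly that a Cauchy sequence forces $\hat{a}$ to be Cauchy: given $\varepsilon>0$, fix $n_{0}$ with $a_{m}-\iota(\varepsilon/3)<a_{n}<a_{m}+\iota(\varepsilon/3)$ for all $n,m\ge n_{0}$, pick a rational $q$ with $q_{\varepsilon/3}\in a_{n_{0}}$, rewrite this via \propref{ComputingApprox} as $\iota(q-\varepsilon/3)<a_{n_{0}}<\iota(q+\varepsilon/3)$, and deduce $\iota(q-\varepsilon)<a_{n}<\iota(q+\varepsilon)$, i.e.\ $q_{\varepsilon}\in a_{n}$, for all $n\ge n_{0}$; hence $q_{\varepsilon}\in\hat{a}$. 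This route has the pleasant feature of not re-deriving completeness from scratch. For (2): once (1) gives that a convergent $(a_{n})$ has $\hat{a}$ a proper Cauchy filter, its unique minimal Cauchy subfilter is $(\hat{a})_{\circ}$, while any limit $b$ is a minimal Cauchy filter with $b\subseteq\hat{a}$; by uniqueness $b=(\hat{a})_{\circ}$.

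For (3) I would work with subsequences. A real number $c$ is a partial limit iff some subsequence $(a_{n_{k}})$ converges to $c$, which by the convergence criterion applied to that subsequence means $c\subseteq\widehat{a_{n_{k}}}$, the eventual-intersection filter of $(a_{n_{k}})$. I would first record the equivalent ``frequency'' form: $c$ is a partial limit iff for every $C\in c$ and every $N$ there is $n\ge N$ with $C\in a_{n}$ --- the nontrivial direction coming from extracting indices $n_{1}<n_{2}<\cdots$ with $C_{k}\in a_{n_{k}}$ along a decreasing countable interval base $C_{1}\supseteq C_{2}\supseteq\cdots$ of $c$, which exists because $c$ is Cauchy and round. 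The rest of (3) is to reconcile this with $c\supseteq\hat{a}$. The implication $c\supseteq\hat{a}\Rightarrow$ (frequency condition) is clean: if some $C\in c$ and $N$ witnessed a failure, shrink $C$ (using roundness together with \propref{ComputingApprox}) to an interval $I\in c$ with $I\subseteq C$ and $I$ short enough that a small enlargement of it is still inside $C$; then \propref{ComputingApprox} forces each $a_{n}$ with $n\ge N$ to lie to one side of $I$, so $a_{n}$ contains the complement of a single interval about the centre of $I$ that still contains $I$; the union $U$ of these complements lies in every $a_{n}$ with $n\ge N$, hence $U\in\hat{a}\subseteq c$, while $U\cap I=\emptyset$ with $I\in c$, contradicting that $c$ is proper.

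The main obstacle I anticipate is the remaining direction of (3), namely showing that every partial limit $c$ satisfies $c\supseteq\hat{a}$: here one must pin the purely ``eventual'' membership information encoded by $\hat{a}$ against the topological rigidity of a minimal Cauchy filter $c$, and it is not obvious that the frequency condition alone delivers each $\hat{A}\in\hat{a}$ into $c$ without further exploiting roundness of $c$ and of the individual $a_{n}$. Part (4) is the same scheme with $\mathcal{F}_{\infty}$ (respectively $\mathcal{F}_{-\infty}$) in the role of $c$: ``$\infty$ is a partial limit'' unwinds, via the convergence criterion, to ``for every $q$ and every $N$ there is $n\ge N$ with $(q,\infty)\cap\mathbb{Q}\in a_{n}$'', and this should match $\mathcal{F}_{\infty}\supseteq\hat{a}$; the argument is lighter than (3) because the generating rays $(q,\infty)\cap\mathbb{Q}$ are totally ordered by inclusion, so no intersection bookkeeping is needed, and the $-\infty$ case is symmetric.
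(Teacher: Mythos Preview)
The paper offers no proof of this theorem; it is explicitly left ``for the amusement of the reader''. So there is nothing to compare against, and your proposal must stand on its own.

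Your treatment of (1) and (2) is correct and pleasantly economical: recasting the convergence lemma as $b\subseteq\hat a$ and then invoking the uniqueness of the minimal Cauchy subfilter is exactly the right move. Your $\varepsilon/3$ argument for ``Cauchy sequence $\Rightarrow$ $\hat a$ Cauchy'' is fine (indeed $\varepsilon/2$ already suffices). Your argument for the implication $c\supseteq\hat a\Rightarrow c$ is a partial limit in (3) also goes through once the interval bookkeeping is made precise: from roundness of $c$ obtain $\varepsilon>0$ with $p_\varepsilon\in c\Rightarrow p_\varepsilon\subseteq C$, take $q_{\varepsilon/2}\in c$ by the Cauchy condition, and set $I=q_{\varepsilon/2}$; then $q_\varepsilon\subseteq C$, so $q_\varepsilon\notin a_n$ for $n\ge N$, and the Remark after \propref{ComputingApprox} forces $a_n\le\iota(q-\varepsilon)$ or $a_n\ge\iota(q+\varepsilon)$, whence $U=\mathbb{Q}\setminus[q-\varepsilon/2,q+\varepsilon/2]\in a_n$ for all $n\ge N$, giving $U\in\hat a\subseteq c$ while $U\cap I=\emptyset$.

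Your instinct that the reverse implication in (3) is the obstacle is well founded --- in fact that implication is \emph{false}, so the theorem as stated is incorrect in parts (3) and (4). For (3), take $a_n=\iota(1/n)$. The sequence converges to $c=\iota(0)$, so $c$ is certainly a partial limit. The set $(0,1)\cap\mathbb{Q}$ lies in $\iota(1/n)$ for every $n\ge2$ (it contains an interval about $1/n$), hence $(0,1)\cap\mathbb{Q}\in\hat a$; but $(0,1)\cap\mathbb{Q}\notin\iota(0)$ since it contains no interval about $0$. Thus $\hat a\not\subseteq c$. For (4), take $a_n=\iota(n)$ for odd $n$ and $a_n=\iota(0)$ for even $n$; then $\infty$ is a partial limit, yet $\hat A=\bigcup_{k\ge0}(k-\tfrac{1}{10},k+\tfrac{1}{10})\cap\mathbb{Q}$ belongs to every $a_n$, hence to $\hat a$, while $\hat A$ contains no ray $(q,\infty)\cap\mathbb{Q}$, so $\hat A\notin\mathcal F_\infty$.

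The correct characterisation replaces ``$c\supseteq\hat a$'' by the meshing condition ``$C\cap\hat A\ne\emptyset$ for all $C\in c$ and $\hat A\in\hat a$'' (equivalently: $c$ and $\hat a$ admit a common proper refinement). Your frequency reformulation is already equivalent to this corrected condition, and your subsequence extraction then completes the proof of the amended (3); the amended (4) is handled the same way with $\mathcal F_\infty$ in place of $c$. You should flag the misstatement rather than try to prove the direction as written.
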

Our presentation of the construction of the reals has come to an end.
We hope the reader enjoyed its geometric flavour and the inherent
strong appeal to rational approximations, and hopefully she found
the proofs and progression elegant. We conclude with the pedagogical
remark that the techniques one learns in the course of the construction
(namely getting acquainted with filters) are useful in topology and
in analysis and are not ad-hoc tools just for this construction. Moreover,
the rather clean convergence criterion obtained in this section may
indicate that this particular construction of the real numbers can
serve as a firm bridge toward the study of elementary analysis rather
than being a swamp of convoluted details one never wants to see again
(or ever).

\subsection*{Ackknowledgements}

I wish to thank the referee for suggesting several of the topics that
went into the lengthy historical overview. With much appreciation
I thank Rahel Berman for comments and discussions. 

\bibliographystyle{plain}
\bibliography{generalReferences}

\end{document}